\newtheorem{thm}{Theorem}[section]
\newtheorem{cor}[thm]{Corollary}
\newtheorem{lem}[thm]{Lemma}
\newtheorem{prop}[thm]{Proposition}
\newtheorem{prob}[thm]{Question}
\newtheorem{conj}[thm]{Conjecture}
\newtheorem{rmk}[thm]{Remark}
\numberwithin{equation}{section}
\def \O {\mathcal O}
\def \Q {\mathbb Q}
\def \R {\mathbb R}
\def \C {\mathbb C}
\def \Z {\mathbb Z}
\def \P {\mathbb P}
\def \A {\mathcal {A}}
\begin{document}

\title{Explicit Gromov-Hausdorff compactifications of moduli spaces of K\"ahler-Einstein Fano manifolds}

\author{Cristiano Spotti\thanks{c.spotti@qgm.au.dk} $^1$ and  Song Sun\thanks{song.sun@stonybrook.edu} $^2$  \vspace{12pt}\\\vspace{5pt}
	{\em {\footnotesize{Dedicated to Sir Simon Donaldson on his 60th​ birthday  \vspace{12pt}}}}}

\affil{$^1$QGM Aarhus University\\$^2$Stony Brook University}

\maketitle

\begin{abstract} 
	We exhibit  the first non-trivial concrete examples of Gromov-Hausdorff compactifications of moduli spaces of  K\"ahler-Einstein Fano manifolds in all complex dimensions bigger than two (Fano K-moduli spaces). We also discuss potential applications to explicit study of moduli spaces of K-stable Fano manifolds with large anti-canonical volume.  Our arguments are based on recent progress about the geometry of metric tangent cones and on related ideas about the algebro-geometric study of singularities of K-stable Fano varieties.
	 \end{abstract}

\vspace{5 mm}


\section{Introduction}

The understanding  of \emph{moduli spaces of Einstein metrics} on smooth manifolds, together with the process of formation of singularities, is a difficult task still  far from being fully  achieved. However, in certain cases the picture become clearer. Under non-collapsing assumptions, Cheeger-Colding theory of limit spaces \cite{CC}, which generalizes results of Anderson \cite{A}, Tian \cite{T90} and others previously obtained in real four dimension, provides important insights on the limit singularities. Moreover, when the Einstein metrics are also K\"ahler, one can use information from complex geometry to study such moduli spaces. 

Moduli spaces of  bioholomorphic isometry classes of complex $n$-dimensional  K\"ahler-Einstein (KE) manifolds with \emph{positive} scalar curvature, which we denote by $\mathcal{E}^+M$, are always pre-compact in the Gromov-Hausdorff (GH) topology. By Cheeger-Colding theory, degenerate limits are non-collapsed spaces, singular in Hausdorff codimension at least four. Moreover, such limits are naturally \emph{projective KE Fano varieties}, that is,  normal algebraic varieties with positive  anticanonical divisor carrying singular KE metrics where the metric and algebraic singularities match precisely \cite{DS14}.  Thus we can define a fairly abstract \emph{GH compactification}  $\overline{\mathcal{E}^+M}^{GH}$ parameterizing certain smoothable Fano varieties. More precisely, thanks to the equivalence between the existence of KE metrics and the algebro-geometric notion of K-stability \cite{CDS}, which also holds in the singular smoothable setting \cite{SSY}, one can define by forgetting the metric structure  a natural ``Hitchin-Kobayashi map'' $$\phi: \overline{\mathcal{E}^+M}^{GH}\longrightarrow  \overline{\mathcal{K}M},$$ where the set $\overline{\mathcal{K}M}$ denotes the isomorphism classes of ($\Q$-Gorenstein smoothable) K-polystable Fano varieties. It can be shown that $\overline{\mathcal{K}M}$ admits a natural  structure of complex analytic space for which the above map $\phi$ become a homeomorphism \cite{LWX1, O15, OSS}, and  \cite{S} for a survey. 

These  abstract  \emph{Fano K-moduli spaces} $\overline{\mathcal{K}M}$  generalize analogous compactifications of moduli of varieties with positive canonical bundle (KSBA compactifications \cite{Kollar}), and thus they are important spaces to study also for purely algebro-geometric reasons.

However, very few  \emph{explicit examples}  of such compact  Fano KE/K-moduli spaces have been known: the only non-trivial ones are the complex two dimensional orbifold compactifications pioneered in \cite{MM} and completed in \cite{OSS}. In this article, we begin an investigation of some higher dimensional cases.  We are mainly interested in two situations. One is the hypersurfaces/complete intersections cases: here, on one hand, there are known results on classical GIT constructions of moduli compactifications and, on the other hand, we often know that the KE moduli is non-empty.  The other situation is the three dimensional case, in which there is a complete classification of \emph{smooth} Fanos by works of Iskovskikh, Mori and Mukai (see the reference book \cite{IP}).

More precisely, first natural examples to consider in higher dimensions are given by \emph{del Pezzo manifolds}, i.e., $n$-dimensional Fano manifolds such that $K_X^{-1}=L^{n-1}$  with $L$ an ample line bundle, in other words, with \emph{Fano index $n-1$}. Smooth del Pezzos and, as it will be important later on, mildly singular ones,  are classified  by T.  Fujita \cite{TFuj}. When the del Pezzo degree $d:=c_1^n(L)$ is less than or equal to four there are non-trivial moduli deformations. For $d=4$ all such smooth del Pezzo manifolds are  realized as smooth complete intersection of two quadrics $X=Q_1 \cap Q_2$ in  $\C \P^{n+2}$, giving an $n$-dimensional  generalization of degree four Del Pezzo surfaces studied in \cite{MM, OSS}. Such Fanos form, up to biholomorphism, a family of deformations which we will denote as $M_{dP_4^n}$ and they \emph{all} admit KE metrics by \cite{AGP}. An algebro-geometric compactification of $M_{dP_4^n}$ is given by a standard GIT quotient $\overline{M}_{dP_4^n}^{GIT}$ naturally associated to their defining embedding \cite{AL} (compare Section \ref{dP4}).  Its boundary GIT polystable points are well-understood.

  We are now ready to state our first main result.

\begin{thm} \label{MT1} For any dimension $n\geq 2$, the GH compactification $\overline{\mathcal{E}^+M}^{GH}_{dP_4^n}$ of the moduli space of KE del Pezzo manifolds of degree four is naturally  identified with the classical GIT quotient  $\overline{M}_{dP_4^n}^{GIT}$. Hence, $\overline{M}_{dP_4^n}^{GIT}\cong\overline{\mathcal{K}M}_{dP_4^n}$.
\end{thm}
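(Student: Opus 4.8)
The plan is to construct a homeomorphism extending the identity on the (quasi-projective) moduli $M_{dP_4^n}$ of smooth degree-four del Pezzo $n$-folds, which sits as a dense subset of both spaces: on the GIT side this is standard, and on the metric side it follows from Cheeger--Colding precompactness \cite{CC} together with the fact that every smooth member carries a KE metric \cite{AGP}. It then suffices to match the boundary strata. Concretely I would (i) show that every Gromov--Hausdorff degeneration $X_\infty$ of smooth KE members is again a complete intersection of two quadrics in $\C\P^{n+2}$, corresponding to a point of the Grassmannian $\mathrm{Gr}(2, H^0(\C\P^{n+2},\mathcal O(2)))$ of pencils of quadrics, and moreover a GIT-polystable such point; (ii) deduce that the resulting map $\overline{\mathcal{E}^+M}^{GH}_{dP_4^n}\to \overline{M}_{dP_4^n}^{GIT}$ is a bijection; and (iii) upgrade it to an isomorphism of analytic spaces. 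Combined with the homeomorphism $\phi\colon \overline{\mathcal{E}^+M}^{GH}\to\overline{\mathcal{K}M}$ recalled in the introduction, this yields the stated identification $\overline{M}_{dP_4^n}^{GIT}\cong\overline{\mathcal{K}M}_{dP_4^n}$.

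For step (i): by \cite{DS14} the limit $X_\infty$ is a normal, $\Q$-Gorenstein smoothable, K-polystable Fano variety carrying a singular KE metric, and the Gromov--Hausdorff convergence is algebraic — realized inside a fixed projective space via a pluri-anticanonical system, with $X_\infty$ a flat limit of the $X_i$ — so in particular the anticanonical volume is preserved, $(-K_{X_\infty})^n=4(n-1)^n$. I would then check that the Fano index does not drop: on the total space of a $\Q$-Gorenstein smoothing the relation $-K=\mathcal L^{\,n-1}$ on a nearby smooth fibre propagates, after shrinking the base, to a Cartier divisor $L_\infty$ on $X_\infty$ with $L_\infty^{\,n}=4$ and $-K_{X_\infty}=L_\infty^{\,n-1}$ (using $H^1(\mathcal O)=H^2(\mathcal O)=0$ and properness of the relative Picard scheme). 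Thus $X_\infty$ is a Gorenstein del Pezzo variety of degree four, and by Fujita's classification \cite{TFuj} every such variety is the base locus of a pencil of quadrics in $\C\P^{n+2}$; being a flat limit of smooth such base loci, it lies on the expected component of the Grassmannian, the degeneracy of its defining pencil being controlled precisely by the singularity types Fujita permits. Finally I would convert K-polystability of $X_\infty$ into GIT-polystability of the corresponding pencil for the natural $SL(n+3)$-linearization, via the identification of the CM line bundle on this parameter space with a positive multiple of the tautological GIT polarization — a Chow/resultant computation for $(2,2)$ complete intersections — so that the Hilbert--Mumford numerical invariants agree.

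For step (ii): injectivity follows because GIT-polystable points have closed orbits, so two limits with the same image are isomorphic as polarized varieties and hence isometric, by uniqueness of the KE metric on a K-polystable Fano. For surjectivity, a GIT-polystable pencil is a flat limit of smooth — hence GIT-stable and, by \cite{AGP}, KE — members $X_i$ of uniformly bounded geometry, so a subsequence Gromov--Hausdorff converges to some $X_\infty$, which by (i) is a GIT-polystable complete intersection flat-limiting the $X_i$; uniqueness of the polystable point in a degenerating family forces $X_\infty$ to be the given pencil, which is therefore KE and lies in $\overline{\mathcal{E}^+M}^{GH}_{dP_4^n}$. Step (iii) then amounts to comparing local models: both sides carry natural local charts (Luna slices on the GIT side, deformation spaces of the polarized variety on the moduli side) on which the CM polarization descends, and the bijection respects them, using that $\overline{\mathcal{K}M}$ already carries its canonical analytic structure with $\phi$ a homeomorphism.

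The main difficulty I anticipate is step (i): forcing $X_\infty$ to remain a complete intersection of two quadrics with only the singularities in Fujita's list, rather than some a priori arbitrary K-polystable Fano variety. This is where the recent progress on the geometry of metric tangent cones becomes essential — the tangent cone at each singular point of $X_\infty$ is a K-semistable Fano cone, and lower bounds on its normalized volume (of Fujita--Li type), together with the classification of such cones in low dimension, rule out singularities worse than those occurring on degree-four del Pezzo varieties (for surfaces, only ordinary double points survive), thereby pinning $X_\infty$ to the del Pezzo deformation family and matching the GIT boundary strata. A secondary technical point is the non-dropping of the Fano index in the limit, together with the precise CM $=$ GIT polarization identification; both need care but are, in principle, computations.
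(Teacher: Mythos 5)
Your overall architecture --- show that GH limits land in Fujita's del Pezzo classification, convert K-polystability to GIT-polystability via the CM line bundle, then run a moduli continuity argument --- is the same as the paper's. But the decisive step (i) is not actually proved, and the shortcut you offer for it is wrong. The relation $-K = L^{n-1}$ does not automatically ``propagate'' to a Cartier divisor on a flat limit: over a $\Q$-Gorenstein degeneration, a line bundle on the nearby smooth fibres extends over the central fibre in general only as a reflexive rank-one sheaf, and its local Cartier index at the newly-created singular points can strictly exceed $1$. The vanishing $H^1(\mathcal O)=H^2(\mathcal O)=0$ controls global obstructions and says nothing about this local index, and the relative Picard functor is not proper in this setting. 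Flat $\Q$-Gorenstein degenerations of degree-four del Pezzos to non-Gorenstein $\Q$-Fanos do exist --- they simply are not GH limits, which is precisely what one has to show. Your closing paragraph gestures at normalized-volume lower bounds of Fujita--Li type and ``classification of cones in low dimension,'' which is the right instinct, but it is left as a heuristic (and the remark that for surfaces only ODPs survive misses that for $n\geq 4$ the limits have higher-dimensional, non-quotient singularities).

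The paper fills this gap with Section 3 and Subsection 4.1. Liu's normalized-volume estimate together with the Hein--Sun comparison between normalized volume and metric density gives $\Theta(Z,p)\geq c_1^n(-K_Z)/c_1^n(-K_{\P^n})=4(n-1)^n/(n+1)^n$, which is $>1/2$ for $n>3$ and equals $1/2$ at $n=3$. A Bishop--Gromov argument on universal covers of links of iterated tangent cones (using Colding--Naber geodesic convexity of the regular locus), the identification $ind(p,L_Z)=ind(O_p,L_{C_p})$, and an induction over the singular stratification then force every such link to be simply connected, hence $L_Z$ to be a genuine line bundle and $Z$ to have Gorenstein canonical singularities (Lemmas \ref{lem3-3}, \ref{cart}, Proposition \ref{large}). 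The borderline case $n=3$, where $\Theta=1/2$ may occur, needs further rigidity input: Schlessinger's rigidity of quotient singularities to rule out $\C^3/\Z_2$ as an isolated tangent cone, and a versal-deformation analysis of generic hyperplane slices near a $\C\times \C^2/\Z_2$ tangent cone to handle the non-isolated case (Proposition \ref{nonisolated}). None of this appears in your proposal, and without it the a priori constraint on GH limits --- the real content of the theorem --- is not established. (Once that is in hand, your steps (ii)--(iii) match the paper; the CM$=$GIT comparison is actually easier than the Chow computation you suggest, since $\mathrm{Pic}$ of the Grassmannian is $\Z$ and one only needs to know the sign, which follows from the existence of one K-polystable point.)
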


We will discuss some properties of these compact moduli spaces and of the varieties they parameterize in Section \ref{dP4}. In particular, this result  implies an explicit classification of which possibly singular complete intersection of two quadrics in $\C\P^{n+2}$ admit KE metrics (see Theorem \ref{KEQ}),  thus extending and reproving the results for the smooth case in \cite{AGP}. Moreover, thanks to the matching between algebraic and metric singularities, it gives a concrete description of all singularities in the GH limits (see Corollary \ref{sing}).  However, the precise asymptotic metric behaviors near the singular loci remain unsettled, but conjecturally clear.

Similarly to \cite{OSS}, the main idea behind the proof of Theorem \ref{MT1} is based on finding a good preliminary \emph{a-priori} understanding of GH limits via the study of their singularities, and then use some classification results for Fano varieties combined with stability comparison arguments to show that a GH limit of KE intersections of quadrics must be a standard GIT-polystable intersection too. Thus the main content of this paper is to find the right constraints on the singularities (in general not of quotient type) of the metric limits.  To obtain this, we make use of the recent algebro-geometric understanding of metric tangent cones via the two steps construction \cite{DS15} $$ \mbox{sing. germ $(Z,p)$ $\rightsquigarrow$ weighted algebraic tangent cone $W$ $\rightsquigarrow$ metric tangent cone $C(Y)$},$$ combined with the  improved volume bound for K-stable Fano varieties recently obtained by Liu \cite{Liu} (after works of Berman \cite{B},  K. Fujita \cite{KFuj} and Li \cite{Li16}) in term of normalized volume of valuations. 

Another motivation for the writing to this article is related to finding \emph{explicit examples} of K\"ahler-Einstein Fano manifolds. By the result in \cite{CDS} this amounts to verify the K-stability for a given Fano manifold. There have been  important recent progresses towards this, see for example \cite{FO, Fujita1, Fujita2, PW}  as well as older results  based on $\alpha$-invariant/log canonical threshold and multiplier ideal sheaves techniques, for example \cite{Siu, T90, TY, N, DK, Ch, CW},  or with large symmetries, e.g.,  \cite{WZ, Del, IS}. However, in general, a systematic understanding is not yet accomplished. 

The study in this paper leads to a different strategy via moduli spaces, which originates from \cite{OSS} and, when it works, it produces not only one example of K-stable Fano manifold, but a \emph{whole explicit  family of them}. This strategy seems particularly appealing for Fano $3$-folds, thanks to the above mentioned complete classification of smooth ones \cite{IP}.  In essence the basic strategy consists in first showing that a given Fano in a family is KE and then run a ``moduli continuity method"  to show that the set of KE Fano varieties is both open (via small deformations argument) and closed (via a-priori bound on GH limits and stability comparison) in a given explicit GIT quotient or, more generally, glueings of GIT quotients.

 We make some first steps along this direction, mostly under the natural hypothesis of an  explicit \emph{gap conjecture} on the volume density of Ricci-flat K\"ahler cones, which is the ratio between the volume of a ball of radius one centered at the vertex and the corresponding ball in the Euclidean space of the same dimension. We believe that such conjecture holds, especially for small values of $n$. Notice that, for our interest, in this paper we always assume the dimension to be $n\geq 3$.

 \begin{conj}[ODP volume gap conjecture in dimension $k$]\label{DGODP}
Let $C(Y)$ be a Ricci-flat K\"ahler cone in dimension $k$ with an isolated singularity at the vertex. Then the volume density of $C(Y)$ is at most the volume of the $k$-dimensional Stenzel's cone, which is the ordinary double point (ODP) $\sum_{i=1}^{k+1} x_i^2=0$ in $\C^{k+1}$ endowed with the explicit Ricci-flat K\"ahler cone metric.

 \end{conj}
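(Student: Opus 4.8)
The plan is to translate the conjecture into a sharp bound on normalized volumes of singularities and to prove it by induction on the dimension. Recall, following Li, Collins--Sz\'ekelyhidi and Li--Xu, that a Ricci-flat K\"ahler cone $C(Y)$ of complex dimension $k$ is a K-semistable Fano cone whose volume density is $\Theta(C(Y))=\widehat{\operatorname{vol}}(o,C(Y))/k^{k}$, where the normalized volume $\widehat{\operatorname{vol}}(o,C(Y))=\min_{v}A_{C(Y)}(v)^{k}\operatorname{vol}(v)$ is attained at the Reeb valuation. The $k$-dimensional Stenzel cone is the affine cone over the smooth quadric $Q^{k-1}\subset\mathbb{P}^{k}$ with its minimal polarization; its Reeb valuation is forced by the $\mathrm{SO}(k+1)$-symmetry to be the blow-up of the vertex, so that $A=k-1$, $\operatorname{vol}=\operatorname{mult}=2$, and $\widehat{\operatorname{vol}}(o,\mathrm{ODP}_{k})=2(k-1)^{k}$. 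The conjecture is therefore equivalent to the inequality $\widehat{\operatorname{vol}}(o,C(Y))\le 2(k-1)^{k}$ for every non-smooth $k$-dimensional Ricci-flat K\"ahler cone; and since the normalized volume of an arbitrary klt singularity is computed by its K-polystable Fano cone degeneration --- the second step of the Donaldson--Sun construction \cite{DS15} --- it is cleaner to aim for the general gap statement $\widehat{\operatorname{vol}}(o,X)\le 2(k-1)^{k}$ for all non-smooth $k$-dimensional klt singularities.

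For the inductive step, one may (after a torus-equivariant degeneration) take the singularity to be an affine cone $C=C(V,L)$ over a K-polystable $\mathbb{Q}$-Fano $V$ of dimension $k-1$ with $-K_{V}=rL$, so that, since the canonical valuation $v_{0}$ is the normalized-volume minimizer of a K-polystable Fano cone, $\widehat{\operatorname{vol}}(o,C)=A_{C}(v_{0})^{k}\operatorname{vol}(v_{0})=r^{k}L^{k-1}=r(-K_{V})^{k-1}$. If $V$ is a weighted projective space, then $C$ is a cyclic quotient $\mathbb{C}^{k}/G$, hence either smooth (excluded) or of normalized volume $k^{k}/|G|\le k^{k}/2<2(k-1)^{k}$. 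If $V$ is the smooth quadric $Q^{k-1}$, then $C$ is $\mathrm{ODP}_{k}$ or a cyclic quotient of it, so $\widehat{\operatorname{vol}}(o,C)\le 2(k-1)^{k}$ with equality precisely for $\mathrm{ODP}_{k}$. Otherwise --- the remaining cases of Fano index at least $k-1$ being weighted projective spaces and singular quadrics, whose cones are cyclic quotients of $\mathbb{C}^{k}$ or have positive-dimensional singular locus and are handled as above or below --- the Fano index of $V$ is at most $k-2$, so $r\le k-2$. In this case, if $V$ is singular we pick a singular point $x\in V$ and combine Liu's refined volume bound \cite{Liu}, $(-K_{V})^{k-1}\le(k/(k-1))^{k-1}\,\widehat{\operatorname{vol}}(x,V)$, with the inductive hypothesis $\widehat{\operatorname{vol}}(x,V)\le 2(k-2)^{k-1}$ to obtain $\widehat{\operatorname{vol}}(o,C)\le 2(k-2)^{k}(k/(k-1))^{k-1}<2(k-1)^{k}$, the last inequality being the elementary $(1-\tfrac1{k-1})(1-\tfrac1{(k-1)^{2}})^{k-1}<1$; while if $V$ is smooth we bound $(-K_{V})^{k-1}$ using the classification of Fano manifolds of small coindex --- Fujita's del Pezzo manifolds for coindex $2$, Mukai's list for coindex $3$, and the bounded families beyond --- to reach $r(-K_{V})^{k-1}<2(k-1)^{k}$ in every remaining case.

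Two further reductions close the loop. If the cone $C$ has a positive-dimensional singular locus, then near a generic point of it $C$ is isomorphic to $\mathbb{C}^{j}\times C'$ for a lower-dimensional cone $C'$, and one concludes from the known product formula for normalized volume together with the inductive hypothesis applied to $C'$; an irregular Reeb field is handled by first degenerating to a maximally toric model and then running the same convex-geometric estimates on the (now possibly irrational-slope) Reeb valuation, or by approximation from quasi-regular cones. The base case is $k=2$: the classical fact that $\mathbb{C}^{2}/\mathbb{Z}_{2}$ has the largest volume density among singular Ricci-flat K\"ahler cones of complex dimension two.

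The main obstacle is the smooth-base, small-coindex subcase of the inductive step: the numerical margins in the Fujita--Mukai classifications become uncomfortably thin as $k$ grows, and Fano manifolds of coindex at least $4$ are not completely classified, so a uniform-in-$k$ argument would really require either effective boundedness inputs or --- most satisfactorily --- a direct proof of the sharp normalized-volume gap $\widehat{\operatorname{vol}}(o,X)\le 2(k-1)^{k}$, with equality characterizing $\mathrm{ODP}_{k}$, for all non-smooth K-semistable Fano cone singularities. For small $k$, and in particular for $k=3$ --- where K-semistable log del Pezzo surfaces form a well-understood bounded family and every inequality above has comfortable slack --- this program should go through, which is the basis for our confidence in the conjecture in that range.
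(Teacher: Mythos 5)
This statement is a \emph{conjecture} in the paper, not a theorem: the authors state it (Conjecture~\ref{DGODP}), derive consequences under its assumption (Theorem~\ref{thm1-3}, via Theorem~\ref{thm5-2} and Conjecture~\ref{conj5-4}), and reformulate it algebro-geometrically as Conjecture~\ref{AlgODP}, but they offer no proof. Indeed, Remark 5.5 in the paper explicitly records that even the much weaker bound $\inf_\nu \widehat{vol}(\nu)\le n^n$ was, at the time of writing, not established in full generality. So there is no ``paper's own proof'' to compare your argument against, and any purported proof should be scrutinized as a genuinely new contribution.

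Your sketch is a sensible research plan, and you have the right invariant: the translation $\Theta(C(Y))=\widehat{vol}(o,C(Y))/k^k$ and the computation $\widehat{vol}(o,\mathrm{ODP}_k)=2(k-1)^k$ are correct, matching equation (\ref{eqn5-1}). But as written it is not a proof, and you yourself flag the decisive gap. Let me make the problems concrete. (i) Your reduction to the case where the singularity is a Fano cone over a K-polystable base uses the stable degeneration picture of \cite{DS15} as if it were known that the normalized volume is preserved (or does not decrease) under both degeneration steps; this is precisely the stable-degeneration conjecture of Li, which was open at the time of this paper and is itself deep. (ii) The heart of the induction --- the case of a smooth base $V$ of coindex $\ge 2$ --- is exactly where the argument has no purchase: beyond Fujita's del Pezzo manifolds and Mukai's coindex-$3$ list there is no complete classification, ``effective boundedness'' does not by itself yield the sharp numerical inequality $r(-K_V)^{k-1}<2(k-1)^k$, and the margins you invoke are not estimated, so this subcase is genuinely unproved, not merely delicate. (iii) The singular-base step relies on the inequality $2(k-2)^k(k/(k-1))^{k-1}<2(k-1)^k$, which is fine, but it also requires the inductive bound $\widehat{vol}(x,V)\le 2(k-2)^{k-1}$ for an arbitrary $(k-1)$-dimensional klt singularity, i.e.\ the full algebraic Conjecture~\ref{AlgODP} in dimension $k-1$, not merely the cone conjecture --- so you must run the induction at the level of Conjecture~\ref{AlgODP}, not Conjecture~\ref{DGODP}, and you should say so. (iv) The irregular Reeb case is dispatched in a sentence (``degenerate to a maximally toric model'' / ``approximate by quasi-regular cones''), but neither device obviously preserves the sharp inequality, and one of the genuine difficulties in this circle of ideas is exactly that the equality $n^n\Theta=\widehat{vol}(\nu_{KE})$ in Proposition~\ref{val} is only known in the quasi-regular case. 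In short: the computation at the ODP and the reduction strategy are reasonable, the base case $k=2$ is indeed trivial, and the product and singular-base reductions are essentially sound modulo (iii); but the smooth small-coindex subcase and the reliance on unproved semicontinuity/degeneration inputs mean the conjecture remains open, exactly as the paper presents it.
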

 
The standard Bishop-Gromov theorem implies that the volume density is at most $1$, and it is also well-known that there is a definite but abstract gap from $1$ (compare Lemma \ref{lem5-1}). One can also reinterpret the statement as that the link of the ODP singularity has the maximal volume of among all \emph{smooth} Sasaki-Einstein manifolds in dimension $2k-1$ that are not isometric to the sphere.    We will say more about this in Section \ref{OtherKE} and also formulate  a corresponding algebro-geometric statement.

\begin{thm} \label{thm1-3}
Assuming the Conjecture \ref{DGODP} is true for all $k\leq n$, then it holds: 

\begin{enumerate}
\item For smooth  KE Fano $n$-folds with anti-canonical volume  $c_1^n(-K_X)> \left(\frac{n^2-1}{n}\right)^n$,  the GH moduli closure must contain only Fano varieties with at worst Gorenstein canonical singularities.

\item The GIT moduli of cubic $n$-folds coincides with the KE/K-moduli. In particular, all  smooth cubics admit KE metrics. 

\end{enumerate}

\end{thm}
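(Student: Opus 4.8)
\emph{I would prove the two parts separately, part (2) relying on part (1).}

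\emph{Part (1).} Fix a smooth KE Fano $n$-fold $X$ with $c_1^n(-K_X)>\big(\tfrac{n^2-1}{n}\big)^n$, and let $X_\infty$ be any variety occurring in the corresponding boundary of $\overline{\mathcal E^+M}^{GH}$; thus $X_\infty$ is a $\Q$-Gorenstein smoothable, K-polystable $\Q$-Fano variety carrying a singular KE metric, and $c_1^n(-K_{X_\infty})=c_1^n(-K_X)$ since intersection numbers are constant in $\Q$-Gorenstein families. The key observation is that \emph{every} point $p\in X_\infty$ has large normalised volume: Liu's localised volume inequality \cite{Liu} (refining \cite{B,KFuj,Li16}) gives
\[
 c_1^n(-K_{X_\infty})\ \le\ \frac{(n+1)^n}{n^n}\,\widehat{\mathrm{vol}}(p,X_\infty),
 \qquad\text{so}\qquad
 \widehat{\mathrm{vol}}(p,X_\infty)\ >\ (n-1)^n .
\]
By the two-step construction of metric tangent cones \cite{DS15}, $C_p$ is a K-polystable Fano cone with $\widehat{\mathrm{vol}}(C_p)=\widehat{\mathrm{vol}}(p,X_\infty)$, and, being a degeneration of the germ $(X_\infty,p)$, it is $\Q$-Gorenstein smoothable. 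Since the Gorenstein locus is open and canonicity is preserved under $\Q$-Gorenstein degenerations, it is enough to prove that each such $C_p$ is Gorenstein with canonical singularities.

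Everything thus reduces to the following claim, which I would prove by induction on the dimension $m\le n$ of the cone: \emph{a $\Q$-Gorenstein smoothable K-polystable Fano cone $C$ of dimension $m$ with $\widehat{\mathrm{vol}}(C)>(m-1)^m$ is Gorenstein with canonical singularities.} If the singular locus of $C$ is strictly larger than the vertex, then at a generic singular point the Cheeger--Colding cone splitting gives $C\cong\C^{m-k}\times C'$, with $C'$ a Ricci-flat K\"ahler cone of dimension $k<m$ with an isolated singularity; monotonicity of the volume density from the vertex outwards, together with the monotonicity of $j\mapsto\big(\tfrac{j-1}{j}\big)^j$, yields $\widehat{\mathrm{vol}}(C')>(k-1)^k$, and $C'$ is again $\Q$-Gorenstein smoothable and K-polystable, so by induction $C'$---hence $\C^{m-k}\times C'$, hence $C$---is Gorenstein canonical along that stratum. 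There remains the case in which $C$ has an \emph{isolated} singularity. Here I would invoke Conjecture \ref{DGODP} in dimension $m$ to confine $\widehat{\mathrm{vol}}(C)$ to the window $\big((m-1)^m,\,2(m-1)^m\big]$, and then classify the $\Q$-Gorenstein smoothable K-polystable Fano cones lying in it: in the quasi-regular case $C=C(V,L)$ with $V$ a Fano of dimension $m-1$ and $-K_V=rL$, the canonical valuation has $\widehat{\mathrm{vol}}(C)=r^m\,L^{m-1}$, the Kobayashi--Ochiai bound then constrains $V$ to be $\C\P^{\,m-1}$ or a quadric, and in either case $\Q$-Gorenstein smoothability (which rules out the rigid quotients $C(\C\P^{\,m-1},\O(b))$, $b\ge2$) together with a direct estimate of $r^m L^{m-1}$ forces $C$ to be Gorenstein; canonicity follows by the same analysis, and the irregular case is handled through the analogous volume formula for K-semistable Fano cones. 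This classification step---the precise interaction between $\Q$-Gorenstein smoothability and the normalised volume---is where I expect the main difficulty to lie.

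\emph{Part (2).} A smooth cubic $n$-fold $X\subset\C\P^{n+1}$ is a del Pezzo manifold of degree $3$, with $-K_X=(n-1)\O_X(1)$, so $c_1^n(-K_X)=3(n-1)^n$; since $(1+\tfrac1n)^n<e<3$ this exceeds $\big(\tfrac{n^2-1}{n}\big)^n$, so part (1) applies to every KE cubic. The plan is a moduli continuity method in the parameter space $M_{\mathrm{cubic}}$ of smooth cubic $n$-folds. One first uses that some smooth cubic is KE---for instance a sufficiently symmetric one, such as the Fermat cubic---so that the KE locus $V\subseteq M_{\mathrm{cubic}}$ is non-empty; it is open since smooth cubic $n$-folds ($n\ge2$) have no infinitesimal automorphisms. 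To see that $V$ is closed, take $X_t\to X_0$ in $M_{\mathrm{cubic}}$ with $X_t\in V$ and pass to a subsequential Gromov--Hausdorff limit $X_\infty$ of the KE cubics $X_t$: by \cite{DS14,SSY}, $X_\infty$ is a K-polystable $\Q$-Fano variety; by part (1) it has at worst Gorenstein canonical singularities; and by the classification of Gorenstein canonical del Pezzo varieties of degree $3$ (\cite{TFuj}) it is again a cubic hypersurface in $\C\P^{n+1}$. Being K-polystable, $X_\infty$ is GIT-polystable as a cubic---by the comparison between K-stability and classical GIT stability for Fano hypersurfaces, exactly as in \cite{OSS}---and the Donaldson--Sun realisation of the degeneration $X_t\rightsquigarrow X_\infty$ exhibits $X_\infty$ in the closure of the $SL(n+2)$-orbit of $X_0$ in the space of cubics; since $X_0$ is a smooth, hence GIT-stable, cubic, that orbit is already closed, so $X_\infty\cong X_0$ and $X_0$ is KE. Hence $V=M_{\mathrm{cubic}}$ and every smooth cubic $n$-fold is KE. Finally, the natural map $\overline{\mathcal K M}_{\mathrm{cubic}}\to\overline{M}^{GIT}_{\mathrm{cubic}}$ sending a K-polystable cubic (a cubic hypersurface, by part (1) and \cite{TFuj}) to its GIT-polystable point is well defined, injective, continuous and has dense image, hence---source compact, target Hausdorff---a homeomorphism, giving $\overline{M}^{GIT}_{\mathrm{cubic}}\cong\overline{\mathcal K M}_{\mathrm{cubic}}$. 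The two points that require genuine care, both with precedents in \cite{OSS}, are the existence of one KE cubic and the K-stability/GIT comparison for cubic hypersurfaces.
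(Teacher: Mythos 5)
Your Part (2) is essentially the paper's argument and is fine: it derives Gorenstein-canonicity from Part (1), invokes Fujita's classification of del Pezzo varieties to get that the limit is again a cubic hypersurface, uses the Fermat cubic (KE by Tian/Nadel) plus positivity of the CM line bundle for the K-vs-GIT stability comparison, and runs the moduli continuity method as in \cite{OSS}.

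Part (1), however, diverges from the paper and contains a genuine gap at exactly the point you flag. You correctly pin the normalized volume of the tangent cone $C_p$ into the window $\left( (m-1)^m, 2(m-1)^m\right]$ using Liu's inequality on one side and Conjecture \ref{DGODP} (via Proposition \ref{val}) on the other, and you correctly use the splitting theorem to reduce to cones with an isolated singularity. But you then propose to finish by classifying $\Q$-Gorenstein-smoothable K-polystable Fano cones in that normalized-volume window, via a Kobayashi--Ochiai--type argument in the quasi-regular case and ``the analogous volume formula'' in the irregular case. This is not a routine step: the normalized volume of a cone is not simply $r^m L^{m-1}$ in the quasi-regular case (it depends on the whole valuation/Reeb structure), the irregular case has no reduction to a Fano base, and there is no general classification of $\Q$-Gorenstein-smoothable Fano cones of bounded normalized volume to appeal to. Moreover, a bound on $\widehat{\mathrm{vol}}$ does not by itself control the Gorenstein index: one can have $\widehat{\mathrm{vol}}$ close to $m^m$ with nontrivial torsion in $\mathrm{Cl}/\mathrm{Pic}$ unless one inputs something geometric. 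The paper avoids this entirely. Instead of classifying cones, it bounds the \emph{local Cartier index} of $L_Z$: Lemma \ref{lem3-3} shows the link $Y$ of any iterated tangent cone $\C^k\times C(Y)$ has $|\pi_1(Y)|\le\Theta(C)^{-1}$ via a Bishop--Gromov argument on the universal cover (using strong geodesic convexity of the regular set from Colding--Naber); the gap hypothesis then forces $\pi_1(Y)$ to be trivial unless the cone is a flat quotient $\C^n/G$ with $G\in\{\Z_2,\Z_3\}$; Schlessinger rigidity and the slicing/versal-deformation argument of Proposition \ref{nonisolated} dispose of these quotients; and the induction of Lemma \ref{cart} promotes triviality of $\pi_1$ to Cartier-ness of $L_Z$ at $p$. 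This machinery---covers of links, trivialization of the limit line bundle, and the Milnor-fibre/simple-connectedness input---is what actually produces the Gorenstein conclusion, and it is not derivable from the normalized-volume window alone. If you want to salvage your outline, you would need to either (a) reproduce this local-Cartier-index argument, or (b) actually prove the classification of smoothable Fano cones in the volume window, which is far harder than the theorem you are trying to prove.
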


 We remark that the above  results will be direct consequences of a more general statement independent of the conjecture (compare Theorem \ref{thm5-2}).
 
In dimension three there is a partial extension of the classification to certain  Gorenstein Fano $3$-folds by Mukai \cite{Mukai}. It seems possible that one can use similar results in combination with our discussion  to understand further examples of Fano K-moduli spaces. Most examples of Fano $3$-folds are within the above volume bound, e.g., the moduli space of Iskovskikh-Mukai-Umemura Fano $3$-folds. It is an interesting open problem to understand \emph{on their whole moduli} which of them admit KE metrics. As we will discuss in Section \ref{OtherKE}, the moduli techniques explained in this article suggest a possible strategy for this and other similar situations.

\subsection*{Acknowledgements}  Both authors would like to thank deeply Sir Simon Donaldson for educating them on K-stability and K\"ahler-Einstein metrics over many years. We are also grateful to Kento Fujita, Radu Laza, Yuchen Liu, Yuji Odaka and Jason Starr for helpful discussions.

C.S. is partially supported by AUFF Starting Grant 24285. S.S. is partially supported by NSF grant DMS-1405832 and an Alfred P. Sloan Fellowship.

 \section{Metric tangent cones, volume densities and valuations} \label{prelim}
  In this section we review and collect some previously known results.  Let $Z$ be a GH limit of an $n$-dimensional KE Fano manifolds $(X_i,\omega_i)$ with fixed volume $V=c_1^n(-K_{X_i})$. By  general theory \cite{DS14}, we know that $Z$ is a naturally a  smoothable $\Q$-Fano variety, that is, a normal variety whose canonical bundle is $\Q$-Cartier and with at worst Kawamata log terminal (klt) singularities. Moreover, it carries a weak KE metric $\omega_\infty$ in the sense of pluri-potential theory \cite{EGZ} with $Vol(Z)= c_1^n(-K_{Z})=V$.
  
   Let $p$ be a singular point in the GH limit, recalling that the set of singular points of the underlying algebraic $\Q$-Fano variety coincides with the set where the limit metric is not smooth. A \emph{metric tangent cone} $C(Y)$ at $p$ is by definition a pointed GH limit centered at $p$ of $(Z,  a_i \omega_\infty,p)$ for some increasing sequence of positive numbers $a_i \rightarrow +\infty$. A priori $C(Y)$ may depend on the chosen scaling sequence, but in this case it has been proved in \cite{DS15} that the tangent cone is actually unique, and it can be realized by some affine complex variety admitting a singular CY cone metric, on the smooth part written as $g_{C(Y)}=dr^2+r^2g_Y$, with $Y$ its $2n-1$ real dimensional link, carrying a (possibly singular) Sasaki-Einstein metric. Similarly one can talk about \emph{iterated tangent cones}, that is, metric tangent cones at singular points of the tangent cones, and further iterations. The theory described below applies to these as well. 
   
The cone $C(Y)$ can be obtained from the analytic germ  $(Z,p) \subset Z$  near $p$ via the following two steps procedure, which we now recall.

 Let $r_{KE}(x)=d_{KE}(x,p)$ be the distance function   from $p$ with respect to the limit KE metric. Then one can define the order of vanishing  with respect to the KE metric of a germ of holomorphic function $f \in \mathcal{O}_p$ to be equal to 
 $$\nu_{KE}(f)= \limsup_{r \rightarrow 0^+}  \frac{\log \left( \sup_{r_{KE}\leq r}|f| \right)}{ \log(r)}.$$
 It is proved in \cite{DS15}, that the above is actually a limit, and $\nu_{KE}(f)$ takes values in a discrete set $S(C(Y)) \subseteq \R_{\geq 0}$ called \emph{holomorphic spectrum} of the tangent cone $C(Y)$. Ordering the element of $S(C(Y)) \subseteq \R_{\geq 0}$,  one has a filtration of ideals  $\mathcal{O}_p=\mathcal{I}_0\supset \mathcal{I}_1\supset \mathcal{I}_2 \dots $ and an associated ring $R_p=\oplus_k \mathcal{I}_k/\mathcal{I}_{k+1}$. Then $W:=Spec(R_p)$ is a well defined normal affine complex variety. Moreover, the analytic germ $(Z,p)$ admits a flat degeneration to  $W$, which further admits an equivariant degeneration to $C(Y)$. Furthermore, it is conjectured in \cite{DS15} that both $W$ and $C(Y)$ should be uniquely characterized in terms of the algebraic geometry of the germ at $p$. 
 
 Going back to more Riemannian consideration, thanks to Bishop-Gromov monotonicity, which also holds for GH limits and their (iterated) tangent cones by Colding's volume convergence theorem \cite{Colding}, we can define the \emph{volume density} at any point $p\in Z$:
 $$\Theta(Z,p)= \lim_{r \rightarrow 0^+} \frac{Vol(B_p^{KE}(r))}{\omega_{2n} r^{2n}} \in (0,1],$$
 where $\omega_{2n}$ denotes the volume of the unit ball in the flat $\C^n$. Here are some well-known properties:
 \begin{itemize}
 	\item $ \Theta(Z,p)= \Theta(C(Y))= \frac{Vol(Y)}{Vol(S^{2n-1}(1))}$, where $Y$ is the link of the (unique) metric tangent Calabi-Yau cone $C(Y)$ at $p$. 
 	\item $ \Theta(Z,p)=1$ if and only if $p\in Z$ is smooth.
 	\item $ \Theta(Z,p)$ is a lower semi-continuous function of $p$. The same  is true for densities on the (iterated) metric tangent cones.
 \end{itemize}

 Next we need to introduce  further algebro-geometric inputs (see \cite{Li16} for details). Given a \emph{valuation} $\nu$ centered at $p\in Z$, one can consider its \emph{normalized volume} 
 $$\widehat{vol}(\nu)=A^n(\nu)\lim_{k\rightarrow +\infty} \frac{n^n}{k^n}\dim \mathcal{O}_p/\mathcal{J}_k ,$$
where the ideal $\mathcal{J}_k=\{f \in \mathcal{O}_p \, | \, \nu(f)\geq k\}$, and $A(\nu)$ the log-discrepancy of $\nu$.

 Generalizing K. Fujita's bound of the volume of smooth KE manifolds \cite{KFuj} (algebro geometrically proved via the notion of Ding semistability) to the singular setting, Liu has recently shown the following optimal volume bound in term of the normalized volume of valuations.
 \begin{thm}[Liu's volume estimate, \cite{Liu}] Let $X$ be a K-semistable $\Q$-Fano variety. Then
 	$$c_1^n(-K_{X})\leq \left(1+\frac{1}{n}\right)^n \widehat{vol}(\nu), $$
 	for any valuation $\nu$ centered at $p\in X$.
 	
 \end{thm}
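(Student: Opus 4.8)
The plan is to obtain the bound by combining two essentially independent inputs: Fujita's valuative criterion for K-semistability, and an elementary inequality relating the volume function $t\mapsto \mathrm{vol}(-K_X-t\nu)$ to the multiplicity of $\nu$. For the first, if $X$ is a K-semistable $\Q$-Fano variety of dimension $n$, then for every valuation $\nu$ centered at a closed point $p\in X$ with $A_X(\nu)<\infty$ one has
\[
A_X(\nu)\,V\ \ge\ \int_{0}^{\infty}\mathrm{vol}(-K_X-t\nu)\,dt,\qquad V:=c_1^n(-K_X),
\]
equivalently $\delta(X)=\inf_\nu A_X(\nu)/S(\nu)\ge 1$ with $S(\nu)=\tfrac{1}{V}\int_{0}^{\infty}\mathrm{vol}(-K_X-t\nu)\,dt$. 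For smooth $X$ this is K. Fujita's theorem \cite{KFuj}; in the (possibly singular) $\Q$-Fano setting it is due to Li \cite{Li16} and Li--Xu, drawing also on Berman \cite{B}. I would invoke this as a black box.

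For the second input I claim that for any valuation $\nu$ centered at a closed point $p\in X$,
\[
\mathrm{vol}(-K_X-t\nu)\ \ge\ V-\mathrm{vol}(\nu)\,t^{n}\qquad(t\ge 0),
\]
where $\mathrm{vol}(\nu)$ is the multiplicity of $\nu$, normalized so that $\widehat{vol}(\nu)=A_X(\nu)^{n}\,\mathrm{vol}(\nu)$. Indeed, fix $m$ divisible enough that $-mK_X$ is Cartier, trivialize $-mK_X$ near $p$, and compose restriction with the projection $\O_{X,p}\to \O_{X,p}/\mathcal{J}_{mt}$ to get a linear map $H^0(X,-mK_X)\to \O_{X,p}/\mathcal{J}_{mt}$. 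Its kernel is exactly the $\nu$-filtration piece $\mathcal{F}^{mt}H^0(X,-mK_X)=\{s:\nu(s)\ge mt\}$ (independent of the trivialization, since $\nu$ vanishes on units), so $\dim H^0(X,-mK_X)-\dim \mathcal{F}^{mt}H^0(X,-mK_X)\le \dim \O_{X,p}/\mathcal{J}_{mt}$. Multiplying by $n!/m^{n}$ and letting $m\to\infty$ gives the claim, using asymptotic Riemann--Roch ($\tfrac{n!}{m^n}h^0(-mK_X)\to V$), the definition of $\mathrm{vol}(\nu)$ ($\tfrac{n!}{m^n}\dim \O_{X,p}/\mathcal{J}_{mt}\to t^n\,\mathrm{vol}(\nu)$), and the identification $\tfrac{n!}{m^n}\dim \mathcal{F}^{mt}H^0(-mK_X)\to \mathrm{vol}(-K_X-t\nu)$ — immediate from a log resolution for divisorial $\nu$, and in general part of the standard theory of volumes of valuations and Newton--Okounkov bodies.

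To combine the two: since $\mathrm{vol}(-K_X-t\nu)\ge 0$ always and $\ge V-\mathrm{vol}(\nu)t^n$ by the second input,
\[
\int_{0}^{\infty}\mathrm{vol}(-K_X-t\nu)\,dt\ \ge\ \int_{0}^{(V/\mathrm{vol}(\nu))^{1/n}}\!\!\bigl(V-\mathrm{vol}(\nu)\,t^{n}\bigr)\,dt\ =\ \frac{n}{n+1}\cdot\frac{V^{(n+1)/n}}{\mathrm{vol}(\nu)^{1/n}}.
\]
Feeding this into the first input yields $A_X(\nu)\ge\tfrac{n}{n+1}\bigl(V/\mathrm{vol}(\nu)\bigr)^{1/n}$, hence $A_X(\nu)^{n}\mathrm{vol}(\nu)\ge\bigl(\tfrac{n}{n+1}\bigr)^{n}V$, i.e. $\widehat{vol}(\nu)\ge\bigl(\tfrac{n}{n+1}\bigr)^{n}c_1^n(-K_X)$, which rearranges to the stated estimate. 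All inequalities become equalities simultaneously only for $X\cong\P^{n}$ with $\nu=\mathrm{ord}_p$, where $\mathrm{vol}(-K_X-t\nu)=V-\mathrm{vol}(\nu)t^{n}$ identically.

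The second input and the final optimization are elementary once volumes of valuations are available. The real content, which I would not attempt to reprove in a self-contained way, is the first input: the valuative characterization of K-semistability for singular $\Q$-Fano varieties, i.e. deducing $A_X(\nu)\ge S(\nu)$ for \emph{all} $\nu$ from the test-configuration definition of K-semistability — precisely the part supplied by the cited works of Berman, K. Fujita and Li (and Li--Xu).
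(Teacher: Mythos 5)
The paper does not prove this statement: it is invoked as a black box from \cite{Liu} (building on \cite{KFuj, B, Li16}), so there is no in-paper argument to compare against. Your reconstruction is correct and is essentially the argument of Liu/Fujita: combine the valuative criterion $A_X(\nu)\ge S(\nu)=\tfrac{1}{V}\int_0^\infty\mathrm{vol}(-K_X-t\nu)\,dt$ for K-semistable $\Q$-Fano varieties with the pointwise lower bound $\mathrm{vol}(-K_X-t\nu)\ge V-\mathrm{vol}(\nu)t^n$ obtained from the restriction map $H^0(X,-mK_X)\to\mathcal O_{X,p}/\mathcal J_{mt}$, and then optimize the truncated integral, which gives exactly $(n/(n+1))^nV\le\widehat{vol}(\nu)$. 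Your identification of the kernel of the restriction map, the asymptotics as $m\to\infty$, and the equality-case analysis ($X\cong\P^n$, $\nu=\mathrm{ord}_p$) are all correct and match the rigidity statement that the present paper uses later (Liu's Theorem 3).

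Two small remarks. First, the one genuinely delicate point, which you flag but do not resolve, is extending both the valuative inequality $A_X(\nu)\ge S(\nu)$ and the convergence $\tfrac{n!}{m^n}\dim\mathcal F^{mt}H^0(X,-mK_X)\to\mathrm{vol}(-K_X-t\nu)$ from divisorial to arbitrary valuations of finite log discrepancy; in the cited sources this is handled by reducing to quasi-monomial valuations via approximation and lower semicontinuity of normalized volume, and it deserves at least a sentence rather than being folded into ``standard theory of Okounkov bodies.'' Second, your normalization $\mathrm{vol}(\nu)=\lim_k\tfrac{n!}{k^n}\dim\mathcal O_p/\mathcal J_k$ is the correct one (it makes $\widehat{vol}(\mathrm{ord}_p)=n^n$ at a smooth point, consistent with equality for $\P^n$); the paper's displayed definition with $\tfrac{n^n}{k^n}$ in Section 2 is a typo for $\tfrac{n!}{k^n}$.
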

 
 In particular, one can consider in the above inequality the valuation induced by the order of vanishing $\nu_{KE}$, which appeared in the above recalled definition of weighted tangent  cone $W$, since a GH limit $Z$ is indeed K-polystable by Berman's result \cite{B}.  It is conjectured in \cite{Li16}, and partially verified \cite{LL}, that such valuation minimizes the normalized volume. 
 
 A crucial  observation for us is that the volume of such KE valuation $\nu_{KE}$ is related with the volume density, as shown in \cite{HS}, Appendix C.
 \begin{prop} \label{val} \cite{HS} Let $p$ a point in KE $\Q$-Fano variety $Z$. Then the following inequality holds: $$n^n \Theta(Z,p) \geq \widehat{vol}(\nu_{KE}).$$
 \end{prop}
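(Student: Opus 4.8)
The plan is to reduce the inequality to the Riemannian geometry of the metric tangent cone $C(Y)$ at $p$, using the two‑step degeneration $(Z,p)\rightsquigarrow W\rightsquigarrow C(Y)$ recalled above. Write $\widehat{vol}(\nu_{KE})=A^n(\nu_{KE})\cdot m(\nu_{KE})$, where $m(\nu_{KE})$ denotes the multiplicity factor, proportional to $\lim_{k}k^{-n}\dim\mathcal{O}_p/\mathcal{J}_k$. First I would observe that this multiplicity factor is unchanged along the degeneration: forming the associated graded ring $R_p$ does not alter the numbers $\dim\mathcal{O}_p/\mathcal{J}_k$, and the subsequent torus‑equivariant degeneration of $W=\mathrm{Spec}\,R_p$ to $C(Y)$ preserves the dimension of every $\xi$‑weight space; hence $m(\nu_{KE})=m_{C(Y)}(\mathrm{wt}_\xi)$, the multiplicity factor of the canonical (vertex) valuation of the cone attached to its Reeb field $\xi$. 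On the cone one has the standard identity $\widehat{vol}_{C(Y)}(\mathrm{wt}_\xi)=n^n\,\Theta(C(Y))=n^n\,\Theta(Z,p)$: indeed the Calabi-Yau condition forces a local generator of the canonical sheaf of $C(Y)$ near the vertex -- the holomorphic volume form -- to be homogeneous of $\xi$‑weight exactly $n$, whence $A_{C(Y)}(\mathrm{wt}_\xi)=n$, while the Martelli-Sparks-Yau‑type relation between the Hilbert series of a K\"ahler cone and the volume of its Sasakian link identifies $m_{C(Y)}(\mathrm{wt}_\xi)$ with $Vol(Y)/Vol(S^{2n-1}(1))=\Theta(Z,p)$. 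Combining these,
\[
\widehat{vol}(\nu_{KE})=\Big(\tfrac{A(\nu_{KE})}{n}\Big)^{n}\,\widehat{vol}_{C(Y)}(\mathrm{wt}_\xi)=\Big(\tfrac{A(\nu_{KE})}{n}\Big)^{n}\,n^n\,\Theta(Z,p),
\]
so that the claimed inequality is equivalent to the single bound $A_Z(\nu_{KE})\le n=A_{C(Y)}(\mathrm{wt}_\xi)$.

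Second I would establish this bound on the log‑discrepancy. The point is to compare $A_Z(\nu_{KE})$ with $A_{C(Y)}(\mathrm{wt}_\xi)$ along the degeneration $(Z,p)\rightsquigarrow W\rightsquigarrow C(Y)$, using that $\nu_{KE}$ is normalized through the K\"ahler-Einstein distance. Concretely, realizing the comparison of canonical classes in a common local embedding and tracking the Jacobian/discrepancy data, one checks that the log‑discrepancy of $\nu_{KE}$ does not exceed that of its transform on the limit cone; geometrically this is the statement that, under the Donaldson-Sun rescalings, the K\"ahler-Einstein volume form -- which governs the discrepancy -- degenerates precisely to the weight‑$n$ Ricci‑flat volume form of $C(Y)$, so that $A_Z(\nu_{KE})\le A_{C(Y)}(\mathrm{wt}_\xi)=n$, with equality whenever the two‑step degeneration preserves the relative canonical data (which is the case, for instance, at smooth points, at ordinary double points, and at quotient singularities).

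The main obstacle is precisely this last comparison. The tension is that $\nu_{KE}$ is a transcendental object -- quasi‑monomial, and in general non‑divisorial -- defined purely by the metric, whereas $A_Z(\cdot)$ is an algebraic invariant extracted from resolutions; making the comparison with $A_{C(Y)}(\mathrm{wt}_\xi)$ rigorous forces one to use in an essential way both that the degenerations in \cite{DS15} are algebraic (so $W$ and $C(Y)$ are genuine normal varieties on which discrepancies make sense) and that the limiting cone metric is Ricci‑flat (which is what fixes the weight of its holomorphic volume form, and hence $A_{C(Y)}(\mathrm{wt}_\xi)$, at $n$). By contrast, the reductions in the first paragraph -- the invariance of the multiplicity factor and the cone identity $\widehat{vol}_{C(Y)}(\mathrm{wt}_\xi)=n^n\,\Theta(Z,p)$ -- are routine consequences of the structure theory recalled in this section.
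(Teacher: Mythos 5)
The paper gives no proof of Proposition~\ref{val}; it is attributed entirely to Appendix~C of \cite{HS}, so there is no in-text argument to compare against. Evaluated on its own terms, your plan has a reasonable skeleton: transfer the multiplicity $\mathrm{vol}(\nu_{KE})$ to the metric tangent cone along the two flat, torus-equivariant degenerations of \cite{DS15} (which do preserve the $\mathrm{wt}_\xi$-graded Hilbert function), invoke the Martelli--Sparks--Yau--type identity $\widehat{vol}_{C(Y)}(\mathrm{wt}_\xi)=n^n\,\Theta(C(Y))$ together with the Calabi--Yau normalization $A_{C(Y)}(\mathrm{wt}_\xi)=n$, and thereby reduce the proposition to the single bound $A_Z(\nu_{KE})\le n$. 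That reduction appears correct.

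The difficulty is that the critical step is asserted rather than proved, and the heuristic you offer for it actually points to the wrong conclusion. You say that under the Donaldson--Sun rescalings the K\"ahler--Einstein volume form ``degenerates precisely'' to the weight-$n$ Ricci-flat volume form of $C(Y)$. Taken at face value this argues for $A_Z(\nu_{KE})=n$, i.e.\ \emph{equality} in the proposition; but the paper explicitly remarks that equality is known only when $C(Y)$ is quasi-regular and is otherwise a conjecture of \cite{HS}. So your comparison must lose a one-sided estimate somewhere in the irregular case, and you never identify where. Moreover, $\nu_{KE}$ is a quasi-monomial valuation with in general irrational weights, so $A_Z(\nu_{KE})$ is not simply the $\nu_{KE}$-order of a local trivializing section of $K_Z$; it is read off from the relative canonical class of a log resolution, and transporting it across the degenerations $(Z,p)\rightsquigarrow W\rightsquigarrow C(Y)$ needs a concrete mechanism -- e.g.\ the behavior of log discrepancy under the Rees degeneration as in \cite{Li16}, an analogous statement for the equivariant degeneration $W\rightsquigarrow C(Y)$, or the local $L^2$ and gradient estimates of \cite{DS15} -- which you invoke only by name. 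In short, you have located the right pressure point, $A_Z(\nu_{KE})\le n$, but the parts you label routine (Hilbert-function preservation and the cone identity) are closer to actually being routine, while the step you gesture at is the real content of \cite{HS}, Appendix~C, and remains a gap in your proposal.
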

 
 We remark that the above inequality is actually an  equality if the metric tangent cone $C(Y)$ at $p$ is quasi-regular, and it is conjectured in \cite{HS} that it is always an equality even if the metric tangent cone is irregular. 
 
 Thus the above two results combine to provide an estimate of the total volume of a $n$-dimensional KE $\Q$-Fano variety $Z$ in terms of the volume of $\C\P^n$, with the Fubini-Study metric with the same Einstein constant:
 
 \begin{equation}\label{eqn2-1}
 \Theta(Z,p) \geq \frac{Vol(Z)}{Vol(\C\P^n)}=\frac{c_1^n(-K_Z)}{c_1^n(-K_{\P^n})},
 \end{equation}
 for any $p \in Z$.
 
 By the semi-continuity of densities, the same inequality also holds with the left hand side equal to the volume densities $\Theta(C(Y'))$ for any iterated tangent cone $C(Y')$ at $q\in C(Y)$, and further iteration. Strict inequality between densities  holds if the iterated tangent cone is not isometric to the original tangent cone itself by the rigidity of Bishop-Gromov, see Lemma \ref{rig}.

 Finally, we remark that such estimate should be compared to the analogous one we could obtain using differential geometric argument based on Bishop-Gromov theorem. However, such estimate (based on comparison with the $2n$-dimensional round sphere) is weaker then the above one obtained via algebro-geometric techniques. For example, one can check that in our three dimensional case, a differential geometric estimate would be given by $c_1^3(-K_{Z}) \leq 100 \, \Theta(Z,p)$, but clearly $c_1^3(-K_{\C \P^3})=64<100$. Having this improved bound is essential for our next investigation.
 
 \section{Local Cartier index bound and volume density}\label{indexbound}

Let $V$ and $r$ be positive integers, and denote by $\mathcal K_{n, V, r}$ be the set of all $n$-dimensional K\"ahler-Einstein Fano manifolds $(X, \omega)$ with $c_1^n(-K_X)=V$ and $K_X^{-1}=L^{r }$ for an ample line bundle $L$. Let $Z$ be a GH limit of elements in $\mathcal K_{n, V, r}$.
 
  Since we have smooth convergence on the smooth locus $Z^{sm}$ of $Z$,  there is a limit Hermitian holomorphic line bundle $L_Z$ on $Z^{sm}\subset Z$ with $L_{Z}^{r}=K_{Z}^{-1}$. Moreover, by \cite{DS14}  $L_Z$ extends as a $\Q$-line bundle on the whole $Z$. 
 Namely, there is an integer $\tilde r$ such that $L_Z^{\tilde r}$ extends as a holomorphic line bundle to  $Z$. In other words, $L_Z^{\tilde r}$ is Cartier on $Z$. 
  At each singular point $p$ of $Z$, we define the local \emph{Cartier index} $ind(p, L_Z)$ to be the smallest integer $\tilde r$ such that $L_Z^{\tilde r}$ is Cartier near $p$, and let $ind(p)=ind(p, L_Z)/r$ be the local \emph{Gorenstein index} of $p$, so that $K_Z^{-ind(p)}$ is Cartier near $p$. 
  
 More generally,  if we choose a sequence of points $p_i\in X_i$ that converge naturally to $p$ under the GH convergence, and choose a sequence of positive integers $a_i$, we can discuss \emph{pointed rescaled GH limits} of $(X_i, a_i\omega_i, p_i)$.  It is shown in \cite{DS15}, Theorem 1.4 that such a rescaled limit $(Z', p')$ if non-compact (i.e., when $a_i\rightarrow\infty$),  is an affine algebraic variety with log terminal singularities which is endowed with a weak Ricci-flat K\"ahler metric. On $Z'$, there is a limit Hermitian $\Q$-line bundle $L_{Z'}$ which is a genuine line bundle on the smooth locus,  and one can similarly define the local  Cartier index and the Gorenstein index at a singular point of $Z'$. Notice that if $a_i\rightarrow\infty$, then the limit Chern connection on $L_{Z'}$ over the smooth locus of $Z'$ is flat (since the curvature of the connection is fixed but the K\"ahler metric is rescaled by $a_i$). In general the flat connection may not be trivial since we do not know a-priori its holonomy, which depends on the local topology around the singular locus. This is a key technical issue that has to be overcome in the proof of the main result in \cite{DS14}, and the idea there was to raise the power of the line bundle to make the holonomy sufficiently small, which suffices for \cite{DS14}. For our main purpose here however, we need to get \emph{explicit} estimates of the power needed.

Let $(Z', p)$ be a pointed rescaled GH limit, which may be $Z$ itself (i.e., when $a_i$ is bounded). One can take a metric tangent cone $C_p:=C(Y)$ at $p$. Notice that, by a diagonal argument, $C_p$ itself is also a pointed rescaled limit.  Let $O_p$ be the vertex of $C_p$, then  we have
 
 \begin{lem} \label{ind}
  $ind(p, L_{Z'})=ind(O_p, L_{C_p})$.
 \end{lem}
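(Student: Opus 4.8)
The plan is to translate the claim into commutative algebra through the two--step degeneration of \cite{DS15}, and then to invoke the principle that freeness of a finitely generated module over a filtered local ring is detected on the associated graded. Concretely I would proceed in three steps: first, reinterpret the two local Cartier indices as orders of freeness of stalks; second, recall that along the DS15 degeneration these stalks pass to one another by forming associated gradeds; third, apply the fact that a module is free if and only if its associated graded is, once for each of the two steps.

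For the first step, a reflexive rank--one sheaf is Cartier near a point exactly when its stalk is a free module over the local ring there, so $ind(p,L_{Z'})$ is the least $\tilde r$ for which the stalk $M:=(L_{Z'}^{\tilde r})_p$ is free over $\mathcal O_{Z',p}$, and $ind(O_p,L_{C_p})$ the least $\tilde r$ for which $(L_{C_p}^{\tilde r})_{O_p}$ is free over the local ring of $C_p$ at $O_p$; hence it suffices to show that for every $\tilde r$ the first stalk is free precisely when the last one is. For the second step, I would recall from \cite{DS15} that the $\nu_{KE}$--filtration of $\mathcal O_{Z',p}$ has finitely generated Rees algebra, that its associated graded is the coordinate ring of $W$ (with vertex $O_W$), that $C_p$ is obtained from $W$ by a further $\C^*$--equivariant Rees degeneration, and --- the crucial point --- that the limit $\Q$--line bundle degenerates together with the variety: the same filtration, applied to local sections of $L_{Z'}^{\tilde r}$ through the limit Hermitian metric, is a good filtration of $M$ whose associated graded module is (the stalk at $O_W$ of) $L_W^{\tilde r}$, and the subsequent equivariant step carries it to the stalk of $L_{C_p}^{\tilde r}$ at $O_p$. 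Since $\nu_{KE}$ is quasi--monomial its filtration is cofinal with the $\mathfrak m$--adic one, so I may replace all local rings by their completions.

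The third step uses the standard fact that, for a good filtration on a finitely generated module $M$ over a complete Noetherian filtered local ring $R$, $M$ is free over $R$ if and only if $\mathrm{gr}\, M$ is free over $\mathrm{gr}\, R$; the nontrivial implication follows by lifting a homogeneous $\mathrm{gr}\, R$--basis of $\mathrm{gr}\, M$ to $M$, which then generates $M$ by successive approximation (using completeness) and is $R$--independent because any relation among the lifts would descend to one among their symbols. Applying this to each of the two degeneration steps yields, for every $\tilde r$,
$$ (L_{Z'}^{\tilde r})_p \text{ free} \iff (L_W^{\tilde r})_{O_W}\text{ free} \iff (L_{C_p}^{\tilde r})_{O_p}\text{ free}, $$
hence $ind(p,L_{Z'}) = ind(O_W,L_W) = ind(O_p,L_{C_p})$. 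The inequality $ind(p,L_{Z'})\le ind(O_p,L_{C_p})$ can also be obtained more softly, without the compatibility discussed below: each degeneration step gives a flat family over $\mathbb A^1$ with a reflexive $\Q$--line bundle $\mathcal L$, central fibre the cone, and an equivariant section through the marked points, and if the line bundle on the central fibre is locally free at the vertex then, $\mathcal L^{\tilde r}$ being torsion free on the normal total space, the local criterion of flatness forces $\mathcal L^{\tilde r}$ to be locally free at that point, hence on a neighbourhood, hence on the general fibre near its marked point.

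I expect the main obstacle to be exactly the compatibility invoked in the second step: one must verify that the line bundles $L_W$ and $L_{C_p}$ produced in \cite{DS15} genuinely arise from $L_{Z'}$ by the \emph{same} $\nu_{KE}$--filtered (and then Rees) construction as the structure sheaf. Concretely, when $L_{Z'}^{\tilde r}$ is already Cartier near $p$ one wants a holomorphic trivialisation which, up to a uniform shift of degrees, respects the log--distance, so that it identifies the $\nu_{KE}$--filtration on sections with that on functions; this rests on the metric comparability of $L_{Z'}^{\tilde r}$ near $p$ with a fixed power of the distance function, and on the finite generation of the Rees algebra --- both among the more substantial outputs of \cite{DS15}. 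Granting these, the ``associated graded is free iff the module is'' mechanism delivers the lemma at once.
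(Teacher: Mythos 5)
Your strategy is genuinely different from the paper's: you convert the statement into commutative algebra (filtrations, Rees degenerations, freeness detected on the associated graded), whereas the paper argues directly with differential geometry --- the $\geq$ direction via the gradient estimate of \cite{DS15} showing that a local trivialization of $L_{Z'}^{r_1}$ rescales to a parallel (hence trivializing) section of $L_{C_p}^{r_1}$, and the $\leq$ direction via the H\"ormander $L^2$-construction on the smooth manifolds close to $(Z',p)$ in the Gromov--Hausdorff sense.

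The problem is the compatibility claim that you yourself flag as the main obstacle and then do not close. The two-step degeneration $(Z',p)\rightsquigarrow W\rightsquigarrow C(Y)$ in \cite{DS15} is built from the $\nu_{KE}$-filtration on $\mathcal O_p$ and on the \emph{function} ring; \cite{DS15} does not establish that the limit Hermitian $\Q$-line bundle is carried along this degeneration, nor that the output of such a construction coincides with the metrically defined $L_{C_p}$ (which is a GH/Hermitian limit, not an a priori algebraic object). The lemma you are proving is precisely an assertion relating these two sides, so invoking the compatibility amounts to assuming essentially what is to be shown. Moreover, your suggested route to the compatibility --- metric comparability of $L_{Z'}^{\tilde r}$ with a power of the distance function --- only makes sense once $L_{Z'}^{\tilde r}$ is already known to be Cartier near $p$, which is the unknown. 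Your ``soft'' flatness/semicontinuity argument for $ind(p,L_{Z'})\leq ind(O_p,L_{C_p})$ suffers from the same dependence on having the $\Q$-line bundle sit in the degenerating flat family in the first place. A further cost of the purely algebraic route (even if completed) is that it would not deliver the stronger statement of Remark~\ref{rmk3-2}, namely that the flat connection on $L_{C_p}^{r_1}$ is actually trivial; the paper gets this for free from the gradient estimate and uses it in the induction of Lemma~\ref{cart}.
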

 
 \begin{proof}

 	We first prove that $ind(p, L_{Z'})\geq ind(O_p, L_{C_p})$. Let $r_1=ind(p, L_{Z'})$, then there is a local non-vanishing holomorphic section $s$ of $L_{Z'}^{r_1}$ in a neighborhood of $p$. Using the local gradient estimate of holomorphic sections as in \cite{DS15} we know $|\nabla s|$ is uniformly bounded in a neighborhood of $p$. As we rescale the metric, we see the derivative of $s$ tends to zero, so that $s$ gives rise to a parallel section of $L_{C_p}^{r_1}$ on the smooth part of $C_p$. In particular $ind(O_p, L_{C_p})\leq r_1$.\

 	For the other direction it seems likely to follow from general algebro-geometric facts,   but here we point out that it also follows directly from the results of \cite{DS14, DS15}. Let $r_0=ind(O_p, L_{C_p})$. Given a local non-vanishing holomorphic section $s$ of $L_{C_p}^{r_0}$ in a neighborhood of $O_p$, one can apply the H\"ormander construction to obtain a local non-vanishing holomorphic section of $(L_{Z'})^{r_0}$ around $p$. This can be proved in exactly the same way as in  Lemma 2.5 and Proposition 2.9 of \cite{DS15}. One can simply replace the holomorphic function there by the holomorphic section $s$. The point is that locally one can realize $(Z', p)$ as GH limits of smooth K\"ahler-Einstein manifolds, and one can use $s$ to construct local holomorphic sections with uniform lower bound on these smooth manifolds by $L^2$-estimate on Stein manifolds. Then by applying a diagonal argument we obtain the required section of $L_{Z'}^{r_0}$ on a neighborhood of $p$ in $Z'$. 
	\end{proof}
 
 \begin{rmk} \label{rmk3-2}
 We have indeed proved more: on the tangent cone $C_p$, the flat connection on $L_{C_p}^{r_1}$ is trivial and in particular $L_{C_p}^{r_1}$ is holomorphically trivial. Here $r_1=ind(p, L_{Z'})$. 
 \end{rmk}
 
 Let $\mathcal I_p$ be the set of all iterated metric tangent cones at $p$, i.e., we include $C_p$, the set of all the tangent cones of all points in $C_p$,  all the further tangent cones, and so on. As we explained in Section \ref{prelim},  we have that  $\Theta(C)\geq V/(n+1)^n$ for all $C$ in $\mathcal I_p$. Recall from general convergence theory of Riemannian manifolds that each element in $\mathcal I_p$ is a geodesic length space, meaning that given any two points there is a \emph{length minimizing geodesic} whose length agrees with the distance between the two points. By Cheeger-Colding theory, any element $C(Y)$ in $\mathcal I_p$ admits a regular-singular decomposition which is invariant under radial dilation. By the result of Colding-Naber \cite{CoNa} the regular locus of $C(Y)$ is \emph{geodesically convex}, namely, any two regular points can be connected by a length minimizing geodesic that consists only of regular points. Indeed, a stronger statement is proved in \cite{CoNa}: given one  regular point and another point, then one can find a length minimizing geodesic connecting them, all points on which are regular except possibly one end point. For convenience we call this property  \emph{strong geodesic convexity}.

 \begin{lem} \label{lem3-3}
 	For any cone $C\in \mathcal I_p$ which is  isometrically of the form $\C^k \times C(Y)$ where $k\geq 0$, we have $\pi_1(Y)$ is finite and its order is bounded by $\Theta(C)^{-1}$.
 \end{lem}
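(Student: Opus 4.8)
The plan is to reduce the claim, via the de Rham splitting of $C$, to a Bishop--Gromov volume comparison between the link $Y$ and the round sphere, carried out on the universal cover of the regular part of $Y$.

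\textbf{Step 1 (reductions).} Volume densities are multiplicative under Riemannian products and $\Theta(\C^k)=1$, so $\Theta(C)=\Theta(\C^k\times C(Y))=\Theta(C(Y))$; hence we may assume $k=0$ and that $C=C(Y)$ has no Euclidean de Rham factor, in particular $m:=\dim_{\C}C(Y)\geq 2$ (if $C$ is flat there is nothing to prove). Since the Ricci--flat K\"ahler cone metric on $C(Y)$ has the form $dr^2+r^2g_Y$, the link is Einstein on its regular part with $\mathrm{Ric}_{g_Y}=(2m-2)g_Y$, exactly the Ricci lower bound of the round unit sphere $S^{2m-1}(1)$, and $\Theta(C(Y))=Vol(Y)/Vol(S^{2m-1}(1))$. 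Finally, $\mathrm{Sing}(C(Y))=\{o\}\cup\R_{>0}\cdot\mathrm{Sing}(Y)$ with $\mathrm{Sing}(Y)$ of Hausdorff codimension $\geq 4$, so $C(Y)^{\mathrm{reg}}$ deformation retracts onto $Y^{\mathrm{reg}}$ and $\pi_1(Y)=\pi_1(Y^{\mathrm{reg}})$ (the singular set, of codimension $\geq 4$, being removable for $\pi_1$).

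\textbf{Step 2 (the universal cover and the key claim).} Let $\Gamma=\pi_1(Y^{\mathrm{reg}})$ and let $\pi\colon\widehat Y\to Y^{\mathrm{reg}}$ be the universal Riemannian covering, with $\Gamma$ acting freely and isometrically; give the metric completion $\overline{\widehat Y}$ the induced length metric and the Hausdorff measure. The key claim is that \emph{$\overline{\widehat Y}$ is a compact non-collapsed limit space with $\mathrm{Ric}\geq 2m-2$.} Over $\widehat Y$ itself this is immediate, since there $\overline{\widehat Y}$ is locally isometric to $Y^{\mathrm{reg}}\subset Y$ and $Y$ is a non-collapsed $\mathrm{Ric}\geq 2m-2$ limit (being the link of the Ricci--flat cone $C(Y)$). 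Near a point of $\overline{\widehat Y}\setminus\widehat Y$ lying over $z\in\mathrm{Sing}(Y)$ one shows that a small neighbourhood is a \emph{finite} cover of a neighbourhood of $z$, branched over the singular set: Colding--Naber strong geodesic convexity, applied in $Y$, guarantees that minimizing geodesics between regular points stay regular, so distances project correctly under $\pi$ and the local covering is controlled by the local fundamental group $\pi_1\big(B_\varepsilon(z)\setminus\mathrm{Sing}\big)$, which is finite --- here one either induces on the dimension of the non-Euclidean factor of the iterated tangent cone at $z$ (base case: the surface cones $\C^2/\Gamma'$), or invokes finiteness of local fundamental groups of klt singularities. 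Completing these finite branched covers of balls in a Ricci--limit space and gluing, $\overline{\widehat Y}$ is a non-collapsed $\mathrm{Ric}\geq 2m-2$ limit; by Bonnet--Myers it is compact, so $\Gamma$ is finite.

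\textbf{Step 3 (conclusion).} By Bishop--Gromov, valid on such limit spaces, $Vol(\overline{\widehat Y})\leq Vol(S^{2m-1}(1))$. Since $\pi$ is an $|\Gamma|$--sheeted covering and the singular set has measure zero, $Vol(\overline{\widehat Y})=Vol(\widehat Y)=|\Gamma|\cdot Vol(Y^{\mathrm{reg}})=|\Gamma|\cdot Vol(Y)$. Combining,
\[
|\Gamma|=\frac{Vol(\overline{\widehat Y})}{Vol(Y)}\leq\frac{Vol(S^{2m-1}(1))}{Vol(Y)}=\Theta(C(Y))^{-1}=\Theta(C)^{-1},
\]
which is the assertion, and in particular $\pi_1(Y)=\Gamma$ is finite.

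\textbf{Main obstacle.} The heart of the argument is the key claim of Step 2 --- that the metric completion of the universal cover of $Y^{\mathrm{reg}}$ is again a compact non-collapsed space with the same Ricci lower bound, so that Bonnet--Myers and Bishop--Gromov apply to it. This is precisely where Colding--Naber's (strong) geodesic convexity of the regular locus, the codimension--$\geq 4$ bound on the singularities, and the non-collapsing hypothesis must be combined (together with an induction on dimension, or algebraic finiteness of local fundamental groups of klt singularities, to see the local covers are finite). Everything else is formal.
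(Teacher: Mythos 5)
Your proposal is aligned with the paper's overall strategy (pass to the universal cover of the link, apply Bishop--Gromov there, use $Vol(Y)=\Theta(C)\,Vol(S^{2n-2k-1})$), but the implementation of the decisive step differs from the paper's, and two points in your write-up are genuine gaps rather than routine omissions. First, you pass to the universal cover of $Y^{\mathrm{reg}}$ and then complete, which forces you to assert $\pi_1(Y)\cong\pi_1(Y^{\mathrm{reg}})$ ``by removability of codimension $\geq 4$ singularities.'' That codimension statement is a theorem for manifolds, but $Y$ is a singular metric space and its local structure near the singular stratum is not a priori controlled enough to justify it; the paper avoids the issue entirely by taking the universal cover $\pi\colon\widetilde Y\to Y$ of the \emph{whole} link. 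Second, your Step 2 --- that $\overline{\widehat Y}$ is a compact non-collapsed $\mathrm{Ric}\geq 2m-2$ limit to which Bishop--Gromov and Bonnet--Myers apply ``out of the box'' --- is the crux, and your sketch of why the local covers are finite (induction on dimension, or algebraic finiteness of local fundamental groups of klt singularities) is either undeveloped or an external heavy import; and the Bonnet--Myers step is circular, since you need $\overline{\widehat Y}$ to already be a legitimate Ricci limit space before Bonnet--Myers says anything.

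The paper's route is more self-contained and avoids both issues. From the Colding--Naber strong geodesic convexity of the regular set of $C\in\mathcal I_p$ one deduces (by analyzing geodesics of the form $(0,c(t)\gamma(t))$ in $\C^k\times C(Y)$) that $Y^{\mathrm{reg}}$ is strongly geodesically convex. The key new step is then a geodesic modification/lifting argument on $\widetilde Y$ itself, using that balls of a uniform radius $2\delta$ in $\widetilde Y$ map isometrically to $Y$: given a minimizing geodesic in $\widetilde Y$ from a regular point, one replaces any portion that first hits the singular set by a lift of a regular geodesic from $Y$, iterating until one obtains a minimizing geodesic with regular interior. This establishes strong geodesic convexity of $\widetilde Y^{\mathrm{reg}}$ directly, and then --- because the singular set has zero volume --- one \emph{adapts} the classical proof of Bishop--Gromov (integrating the exponential Jacobian along regular geodesics) rather than invoking a limit-space version. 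The volume bound $Vol(\widetilde Y)\leq Vol(S^{2n-2k-1})$ and the degree formula immediately give $|\pi_1(Y)|\leq\Theta(C)^{-1}$, with finiteness as a byproduct; no Bonnet--Myers, no completion of a cover of the regular part, and no appeal to algebraic finiteness of local fundamental groups. Filling the gap in your Step 2 essentially means supplying this geodesic-lifting argument.
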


 \begin{proof}
It is elementary to see that given any two point $q_1, q_2$ in $Y$ with distance $D$ smaller than $\pi$, a length minimizing geodesic connecting $(0, q_1)$ and $(0, q_2)$ in $\C^k\times C(Y)$ is of the form $(0, c(t)\gamma(t))$ for a length minimizing geodesic $\gamma(t)$ in $Y$ connecting $q_1$ and $q_2$ and a function $c(t)$ that depends only on $D$ (which can be determined by considering the model case $Y=S^1$).  This implies that the regular set of $Y$ is strongly geodesically convex. 

Let $\pi: \widetilde Y\rightarrow Y$ be the universal cover. Then $\widetilde Y$ is also naturally a geodesic length space, and a length minimizing geodesic on $\widetilde Y$ locally maps to length minimizing geodesic in $Y$. We claim that the regular part of $\widetilde Y$ is also strongly geodesically convex. To see this, take a regular point $q_1$ and an arbitrary point $q_2$ in $\widetilde Y$, and let $\widetilde \gamma(t)(t\in [0, 1])$ be a length minimizing geodesic connecting them. If $\widetilde\gamma(t)(t\in [0, 1))$ does not contain entirely of regular points, then since the regular set is open, we can find the first time $T\in (0, 1)$ such that $\widetilde \gamma(T)$ becomes singular. Let $\delta>0$ be such that any ball of radius $2\delta$ in $\widetilde Y$ is mapped isometrically to a ball in $Y$.  Then we consider $\widetilde\gamma(t)$ for $t\in [T-\delta, T+\delta]\cap [0, 1]$. Denote by $\tilde T=\min(T+\delta, 1)$. We know $\pi(\widetilde\gamma(T-\delta))$ is regular, so by the strong geodesic convexity of $Y$ it follows that we can find a possibly different length minimizing geodesic $\gamma'(t)$ connecting $\pi(\widetilde \gamma(T-\delta))$ and $\pi(\widetilde \gamma(\tilde T))$ so that $\gamma'$ is regular in the interior. Now by our choice of $\delta$ we can lift $\gamma'$ to a length minimizing geodesic $\widetilde\gamma'$ in $\widetilde Y$ connecting $\widetilde \gamma(T-\delta)$ and $\widetilde \gamma(\tilde T)$.  So we can replace the portion of $\widetilde\gamma(t)$ over $[T-\delta, \tilde T]$ by $\widetilde\gamma'$, and still obtain a length minimizing geodesic but on which the first singular time is at least $\tilde T$. Now we can continue this process and since $\delta>0$ is a fixed number we can in the end reach a length minimizing geodesic on which there is no singular point in the interior. This proves the claim.
  
By Cheeger-Colding-Tian \cite{CCT} it follows  that $k\leq n-2$, and the regular locus of $\widetilde Y$ is Einstein with Ricci curvature $2(n-k)$. Now since the singular set of $\widetilde Y$ has zero volume (since it has Hausdorff codimension at least $2$), we can adapt the usual proof of Bishop-Gromov volume comparison theorem to $\widetilde Y$, and  conclude that the volume of a ball centered at a regular point $q\in \widetilde Y$ is at most the volume of the corresponding ball in $S^{2n-2k-1}$. Therefore the volume of $\widetilde Y$ is at most the volume of $S^{2n-2k-1}$. On the other hand, it is easy to see by definition that the volume of $Y$ is equal to $\Theta(C) Vol(S^{2n-2k-1})$, so we conclude that the degree of the cover is finite,  and is bounded above by $\Theta(C)^{-1}$.
 \end{proof}
 \begin{rmk}\label{rmk3-4}
 Indeed the above arguments prove more. Take a singular point $\tilde q$ of $\widetilde Y$, with image $q\in Y\subset C(Y)$, then it holds that
 \begin{equation} \label{eqn3-1}
 Vol(\widetilde Y)\leq \Theta(C(Y), q) Vol(S^{2n-2k-1}), 
 \end{equation}
where $\Theta(C(Y), q)$ is the density of the tangent cone of $C(Y)$ at $q$.  This follows from the fact that for fixed $r<\delta$, $Vol(B(q, r))$ is a continuous function of $q$ in $\widetilde Y$, and applying Bishop-Gromov volume comparison on balls centered at a regular point of $\widetilde Y$.

 \end{rmk}
 \begin{rmk}
 	We mention that for $n=3$ it is proved in \cite{DS14} that the link of tangent cones are five dimensional Sasaki-Einstein orbifolds, in which case the above arguments can be made more direct since the universal cover $\widetilde Y$ is also a Sasaki-Einstein orbifold. 
 \end{rmk}
 
We can now use the above proposition to study the Cartier index on iterated tangent cones via an induction argument based on the stratification of the singularities.

 \begin{lem} \label{cart}
 	If $\Theta(Z,p)>1/2$, then the $\Q$-line bundle $L_{C}$ is Cartier for all $C\in \mathcal I_p$.
 \end{lem}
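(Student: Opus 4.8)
The plan is to show that for every iterated tangent cone $C\in\mathcal I_p$ the reflexive $\Q$-line bundle $L_C$ is holomorphically trivial on the regular locus $C^{sm}$, hence genuinely Cartier on all of $C$. The mechanism is that, as recalled in Section \ref{indexbound}, the limiting Chern connection on $L_C$ over $C^{sm}$ is flat (the curvatures stay bounded while the metrics are blown up by $a_i\to\infty$), so $L_C|_{C^{sm}}$ is a flat unitary line bundle; once $\Theta$ is large the fundamental group leaves the holonomy nowhere to live. The first thing I would check is that the hypothesis $\Theta(Z,p)>1/2$ propagates to all of $\mathcal I_p$: by Bishop--Gromov monotonicity together with Colding's volume convergence \cite{Colding} the density is non-increasing in the radius, hence on a metric cone it is minimized at the vertex, so passing to a tangent cone at any point of a cone --- and iterating --- can only increase the density (this is exactly the monotonicity already used in Section \ref{indexbound}, cf. Lemma \ref{rig}). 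Since $\Theta(C_p)=\Theta(Z,p)$, we obtain $\Theta(C)\ge\Theta(Z,p)>1/2$ for every $C\in\mathcal I_p$.

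Next I would use this to see that the link of every $C\in\mathcal I_p$ is simply connected. Each such $C$ is a metric cone, $C=C(Y)$; applying Lemma \ref{lem3-3} with $k=0$ gives $|\pi_1(Y)|\le\Theta(C)^{-1}<2$, i.e. $\pi_1(Y)=1$. To transfer this to $C^{sm}$ I would invoke that the singular set of $C$ is a complex-analytic subvariety of complex codimension at least two (Cheeger--Colding--Tian \cite{CCT}, together with the matching of metric and algebraic singularities in \cite{DS14,DS15}): deleting the non-vertex part of the singular set, a set of real codimension at least three, from $C\setminus\{O_C\}$ (which retracts onto $Y$) does not change $\pi_1$, so $\pi_1(C^{sm})=\pi_1(Y)=1$.

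Then the conclusion follows quickly: a flat unitary line bundle over the simply connected manifold $C^{sm}$ has trivial holonomy, hence is holomorphically trivial on $C^{sm}$; since $C$ is normal and $L_C$ is, by construction, the reflexive extension of $L_C|_{C^{sm}}$, this forces $L_C\cong\mathcal O_C$, and in particular $L_C$ is Cartier on all of $C$. Alternatively, one could phrase this as an induction on the strata of the singular set, as the sentence before the statement suggests: at a singular point $q\neq O_C$, Lemma \ref{ind} identifies $ind(q,L_C)$ with the Cartier index at the vertex of the tangent cone $C_q$, which splits off a $\C$-factor (a Kähler metric de Rham-splitting an $\R$-factor also splits off a $\C$-factor) and is therefore of strictly smaller dimension, so is covered by the inductive hypothesis, while at the vertex $O_C$ the triviality of $\pi_1(Y)$ kills the holonomy --- which by Remark \ref{rmk3-2} is a priori cyclic of order $ind(O_C,L_C)$ --- giving $ind(O_C,L_C)=1$.

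The step I expect to be the main obstacle is the passage, in the second paragraph, from $\pi_1(Y)=1$ for the \emph{metric} link $Y$ to triviality of the holonomy of a connection that lives on the \emph{smooth} locus: this requires knowing that the singular set does not contribute to $\pi_1$, i.e. that it has real codimension at least three, which for these Calabi--Yau links rests on the deep regularity theory rather than on soft comparison geometry. One also has to be careful that $L_C$ is exactly the reflexive extension of its restriction to $C^{sm}$, so that holomorphic triviality on $C^{sm}$ really does upgrade to the Cartier property at the vertex rather than only to the triviality of some power of $L_C$ (as in Remark \ref{rmk3-2}).
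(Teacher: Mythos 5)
Your first (main) approach has a genuine gap exactly where you flag it, and the paper's proof is designed precisely to route around it. You want to deduce triviality of the flat connection on $L_C|_{C^{sm}}$ from $\pi_1(Y)=1$, which requires identifying $\pi_1(C^{sm})$ with $\pi_1(Y)$. But $C\setminus\{O_C\}$ is in general \emph{not} a manifold (the cone $C$ may have non-isolated singularities, so the link $Y$ itself is a singular space), and the ``removing a set of high codimension does not change $\pi_1$'' principle is a statement about deleting tame subsets from \emph{manifolds}. Replacing it by a comparison of $\pi_1(Y^{sm})$ with $\pi_1(Y)$ for a singular Sasaki--Einstein space $Y$ is not a soft topological fact, and nothing in the paper's toolbox up to this point supplies it. (Your codimension count is also off: complex codimension $\geq 2$ gives real codimension $\geq 4$ in $C$, hence $\geq 3$ in $Y$, which is in your favor, but it does not by itself repair the argument.) Your alternative, inductive outline at the end is structurally closer to the paper, but it repeats the same gap: ``the triviality of $\pi_1(Y)$ kills the holonomy'' is again a statement about the holonomy representation of $\pi_1(C^{sm})$, not of $\pi_1(Y)$.

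The paper's actual proof is an induction on the number of flat $\C$-factors that a cone in $\mathcal I_p$ splits off, and it avoids $\pi_1(C^{sm})$ entirely by a covering-space construction. For a cone $C=\C^{n-l-1}\times C'$, the tangent cone at any non-vertex point of $C'$ splits off one more $\C$-factor, so by the inductive hypothesis together with Lemma \ref{ind}, $L_C$ is already Cartier on the complement of the axis $\C^{n-l-1}\times\{O_{C'}\}$. One then takes the \emph{smallest} $m$ for which $L_C^m$ is holomorphically trivial (it is finite by Remark \ref{rmk3-2}), picks a trivializing section $s$, and forms the $m$-th root locus $\{(\ell,x):\ell\in(L_C)_x,\ \ell^m=s_x\}$. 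Because $L_C$ is a genuine line bundle away from the axis, this is an honest, connected, unbranched degree-$m$ covering of \emph{all} of $C\setminus(\C^{n-l-1}\times\{O_{C'}\})$, singular locus included --- which is precisely what the Cartier property bought. Since $C\setminus(\C^{n-l-1}\times\{O_{C'}\})$ deformation-retracts onto $Y$ and $|\pi_1(Y)|\leq\Theta(C)^{-1}<2$ by Lemma \ref{lem3-3} (using, as you correctly note, that the density bound $\Theta(C)\geq\Theta(Z,p)>1/2$ propagates to all of $\mathcal I_p$), the base is simply connected and $m=1$. So the missing idea is to transfer the covering problem from $C^{sm}$ to $C\setminus(\C^{n-l-1}\times\{O_{C'}\})$ via the inductively established Cartier property, rather than to try to transfer $\pi_1$ from $Y$ down to $C^{sm}$.
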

 \begin{proof}
 	We prove this by induction on $k$ where $n-k$ is the maximum number of flat factor $\C$ that a tangent cone can splits off holomorphic isometrically. When $k\leq 1$ by \cite{CCT} the tangent cone is $\C^n$ and there is nothing to prove. Suppose the conclusion holds for all $k\leq l$. Now suppose one cone $C$ is of the form $\C^{n-l-1}\times C'$. For each non-vertex point $q$ of $C'$, the metric tangent cone $C_q$ would split off at least a factor $\C^{n-l}$, by the splitting theorem \cite{CCT}. So by induction assumption and Lemma \ref{ind} we know $L_{C'}$ is Cartier near $q$, hence $L_{C}$ is Cartier on the complement of $\C^{n-l-1}\times \{O_{C'}\}$. It then suffices to prove that $L_{C}$ is holomorphically trivial. By Remark \ref{rmk3-2} we know $L_{C}^{r_1}$ is holomorphically trivial with $r_1=ind(p, L_{Z'})$. Let $m$ be the smallest integer so that $L_C^m$ is holomorphically  trivial, and let $s$ be a trivializing section.  Then we can define a degree $m$ covering  of $C\setminus \C^{n-l-1}\times \{O_{C'}\}$ as the subset of $(l, x)$ where $l\in (L_C)_x$ with $l^m=s_x$. This is clearly well-defined on the regular locus and it also extends naturally over the singular locus since $L_C$ is Cartier. The covering is connected by our choice of $m$.  By the  volume bound  we know  $\Theta(C)\geq \Theta(Z,p)>1/2$,   so by Lemma \ref{lem3-3} the set  $C\setminus \C^{n-l-1}\times \{O_{C'}\}$ is simply connected hence $m$ must be equal to $1$. Hence we have finished  the induction proof.
 \end{proof}

 \begin{prop} \label{large}
 If $\Theta(Z,p)>1/2$, then $L_Z$ is a genuine line bundle near $p$, and $p$ is a  Gorenstein canonical singularity. 
 \end{prop}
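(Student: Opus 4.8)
The plan is to assemble the three lemmas established above into a short argument. First I would apply Lemma~\ref{cart} to the metric tangent cone $C_p=C(Y)$ at $p$ \emph{itself}: since $C_p\in\mathcal I_p$ and $\Theta(Z,p)>1/2$ by hypothesis, the $\Q$-line bundle $L_{C_p}$ is Cartier everywhere on $C_p$, in particular at the vertex $O_p$, so that $ind(O_p,L_{C_p})=1$. Then Lemma~\ref{ind}, applied with $Z'=Z$, yields $ind(p,L_Z)=ind(O_p,L_{C_p})=1$; equivalently, $L_Z$ is Cartier, i.e.\ a genuine holomorphic line bundle, in a neighbourhood of $p$. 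This gives the first assertion.

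For the second assertion, recall that $L_Z^{\,r}=K_Z^{-1}$ by construction, so the first part shows that $K_Z=L_Z^{-r}$ is Cartier near $p$; thus $p$ is a Gorenstein point of $Z$. Finally, since $Z$ is a $\Q$-Fano variety it has klt singularities (Section~\ref{prelim}), and at a Gorenstein point all discrepancies over $Z$ are integers; the klt condition forces each of them to be $>-1$, hence $\geq 0$, so $p$ is a canonical singularity. This completes the proof.

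I expect essentially no obstacle at this final step: the argument is bookkeeping once Lemmas~\ref{ind}, \ref{lem3-3} and \ref{cart} are in hand. The one point that deserves care is that Lemma~\ref{cart} provides Cartierness of $L_{C_p}$ \emph{at the vertex}, not merely on the punctured cone — this is exactly what pins the local index to $1$ through Lemma~\ref{ind}. The substantive content lies upstream, in the volume--topology dichotomy of Lemma~\ref{lem3-3}: a density exceeding $1/2$ forces the relevant links to be simply connected, which kills the holonomy/covering obstruction to holomorphic triviality of the line bundle on the cone.
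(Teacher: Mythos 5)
Your proof is correct and takes essentially the same route as the paper: Lemma~\ref{cart} forces $L_{C_p}$ to be Cartier on $C_p$, Lemma~\ref{ind} transfers this to $ind(p,L_Z)=1$, and integrality of discrepancies plus klt gives canonical. The one small point worth tightening is the line ``thus $p$ is a Gorenstein point of $Z$'': in the strict sense Gorenstein also requires Cohen--Macaulayness, which you have not yet established at that stage; the paper handles this by first deducing canonicality and then invoking the standard fact that canonical singularities are Cohen--Macaulay, so that $K_Z$ Cartier together with canonical yields Gorenstein.
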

 
 \begin{proof}
  By the density estimate, we know $\Theta(C)>1/2$ for all possibly iterated tangent cones. Applying Lemmas \ref{ind} and \ref{cart} we see that  $ind(p, L_Z)=1$. Hence $L_Z$ is Cartier. Then so is $-K_X$, hence the Gorenstein index of $Z$ is one. Since $Z$ has log terminal singularities, the discrepancies of any log resolution, which must be integers in our case, are at least $-1$, so they are non-negative. This shows $Z$ has canonical singularities. It is well-known that canonical singularities are Cohen-Macaulay so must be Gorenstein.
 \end{proof}

Using (\ref{eqn2-1}) we immediately obtain the following result on GH limits of KE Fanos with large volumes.
 
\begin{cor} \label{cor3-6}
Suppose $V>\frac{1}{2}(n+1)^n$, then a Gromov-Hausdorff limit $Z$ of KE Fanos in $\mathcal K_{n, V, r}$ has Gorenstein canonical singularities and $K_Z=L_Z^{-r}$ for a genuine line bundle $L_Z$. 
\end{cor}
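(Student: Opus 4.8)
The plan is to reduce the statement directly to the pointwise density bound (\ref{eqn2-1}) and Proposition~\ref{large}, both of which are already in place. First I would record the normalizations: with respect to the Fubini--Study metric having the same Einstein constant one has $Vol(\C\P^n)=c_1^n(-K_{\P^n})=(n+1)^n$, while by the general theory recalled in Section~\ref{prelim} the GH limit $Z$ is a $\Q$-Fano variety with $Vol(Z)=c_1^n(-K_Z)=V$. Hence for \emph{every} point $p\in Z$, and in particular for every singular point, inequality (\ref{eqn2-1}) yields
$$\Theta(Z,p)\ \geq\ \frac{c_1^n(-K_Z)}{c_1^n(-K_{\P^n})}\ =\ \frac{V}{(n+1)^n}\ >\ \frac{1}{2},$$
the last step being exactly the hypothesis $V>\frac{1}{2}(n+1)^n$.

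Once the density is known to be strictly larger than $1/2$ at each point of $Z$, I would apply Proposition~\ref{large} at every singular point $p$: it gives that the limit Hermitian $\Q$-line bundle $L_Z$ is a genuine holomorphic line bundle in a neighbourhood of $p$ and that $p$ is a Gorenstein canonical singularity. Near smooth points $L_Z$ is already a line bundle, so since being Cartier is a local condition and $Z$ is compact, $L_Z$ is a genuine line bundle on all of $Z$ and $Z$ has at worst Gorenstein canonical singularities.

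It then remains to upgrade the relation $L_Z^r=K_Z^{-1}$ from the smooth locus to the whole of $Z$, and this is immediate: $Z^{sm}$ is the complement of a closed analytic subset of codimension at least two (in fact at least four), the isomorphism $L_Z^r\cong K_Z^{-1}$ holds over $Z^{sm}$ by construction of $L_Z$, and both $L_Z^r$ and $K_Z^{-1}$ are now honest line bundles on the normal variety $Z$; since a rank-one reflexive sheaf on a normal variety is determined by its restriction to such an open set, the isomorphism extends, giving $K_Z=L_Z^{-r}$ everywhere.

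There is essentially no obstacle here: the Corollary is a direct packaging of the density estimate (\ref{eqn2-1}) with Proposition~\ref{large}, the only point requiring comment being the codimension-two extension of $K_Z^{-1}\cong L_Z^r$ across the singular set, which is automatic once both sheaves are line bundles on a normal variety. All the genuine work has already been carried out in establishing (\ref{eqn2-1}) and Proposition~\ref{large}.
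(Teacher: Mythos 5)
Your proof is correct and follows essentially the same route as the paper, which simply invokes the density estimate (\ref{eqn2-1}) together with Proposition~\ref{large}. The extra details you add (the explicit numerics and the reflexive-sheaf extension of $K_Z^{-1}\cong L_Z^r$ across the singular locus) are exactly the points the paper leaves implicit when it says the corollary follows ``immediately.''
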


As we will see below, such condition is not empty since del Pezzo manifold of degree four are within that bound. However, it is not known to us if there exist other Fano $n$-folds ($n\geq 4$) which have \emph{non-trivial K-moduli} and which satisfy the above volume inequality. Thus, we ask the following question:

\begin{prob} Beside the degree four del Pezzo case, are there any other examples of not rigid K-stable Fano manifolds whose volume is bigger than half the volume of the projective space?
\end{prob}

In any case,  one can use exactly the same argument to obtain the following more general Cartier index bound. 
   
   \begin{prop}
   The local Cartier index satisfies $ind(p, L_Z)\leq \Lambda^{n-1}$, where $\Lambda=\left \lfloor1/\Theta_p\right \rfloor$.
   \end{prop}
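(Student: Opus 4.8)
The plan is to mimic the proof of Proposition~\ref{large}, but instead of exploiting the special threshold $1/2$ (which forces $m=1$ in Lemma~\ref{cart}), to run the same induction on the number $k$ of non-flat directions and keep track of how the covering degrees accumulate. First I would set $\Lambda=\lfloor 1/\Theta_p\rfloor$ and observe, using (\ref{eqn2-1}) and the semicontinuity of the density together with the Bishop–Gromov rigidity (Lemma~\ref{rig}), that for \emph{every} iterated tangent cone $C\in\mathcal I_p$ one has $\Theta(C)\geq\Theta_p$, hence $\lfloor 1/\Theta(C)\rfloor\leq\Lambda$; this is the uniform input that feeds the induction.

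Next I would prove by induction on $k$ (where $n-k$ is the maximal number of holomorphic-isometric $\C$-factors a cone splits off) the statement: for any $C\in\mathcal I_p$ of the form $\C^{n-k}\times C'$, the $\Q$-line bundle $L_C$ has local Cartier index at the vertex at most $\Lambda^{k-1}$. The base case $k\leq 1$ gives $C=\C^n$ and index $1=\Lambda^0$. For the inductive step, write $C=\C^{n-l-1}\times C'$; by the splitting theorem \cite{CCT} every non-vertex point $q$ of $C'$ has a tangent cone splitting off an extra $\C$, so by the induction hypothesis and Lemma~\ref{ind} the bundle $L_{C'}$ is locally Cartier of index at most $\Lambda^{l-1}$ near each such $q$. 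Passing to the power $N:=\Lambda^{l-1}$ (or the lcm of the local indices, which divides it), $L_C^{N}$ is then a genuine line bundle on $C\setminus\C^{n-l-1}\times\{O_{C'}\}$. By Remark~\ref{rmk3-2} some power of $L_C$ restricted to the cone is holomorphically trivial, so $L_C^{N}$ is holomorphically trivial away from the codimension-$\geq 2$ locus $\C^{n-l-1}\times\{O_{C'}\}$; let $m$ be the order of $L_C^N$ in the local Picard group, take a trivializing section $s$ of $L_C^{Nm}$, and build the associated connected degree-$m$ cyclic cover of $C\setminus\C^{n-l-1}\times\{O_{C'}\}$ exactly as in Lemma~\ref{cart}. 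This cover extends over the singular (non-vertex) locus since $L_C^N$ is Cartier there, so it is classified by $\pi_1$ of the punctured cone, which by the splitting $C=\C^{n-l-1}\times C'$ equals $\pi_1$ of the link $Y'$ of $C'$. By Lemma~\ref{lem3-3} (applied to $C'$, which splits no further $\C$) this fundamental group is finite of order at most $\Theta(C')^{-1}=\Theta(C)^{-1}\leq\Lambda$, so $m\leq\Lambda$. Therefore $L_C$ has local Cartier index dividing $Nm\leq\Lambda^{l-1}\cdot\Lambda=\Lambda^{l}$, completing the induction. Specializing $C=C_p$ (which splits off $n-k$ flat factors for some $k\leq n$) and applying Lemma~\ref{ind} gives $ind(p,L_Z)=ind(O_p,L_{C_p})\leq\Lambda^{k-1}\leq\Lambda^{n-1}$.

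The main obstacle is bookkeeping the two-stage accumulation of degrees correctly: at each level of the stratification one both inherits a Cartier index $\Lambda^{l-1}$ from the transverse slice $C'$ and then possibly multiplies by a further cyclic-cover degree $m\leq\Lambda$ coming from the link's fundamental group, and one must check that these multiply rather than, say, add, and that the worst case is a chain of length $n-1$ of such steps (the deepest stratum being a point, where $k$ can be as large as $n$). One technical point to be careful about is that Lemma~\ref{lem3-3} as stated applies to cones of the form $\C^k\times C(Y)$ and bounds $|\pi_1(Y)|$; here I apply it to $C'$ with its own maximal splitting already extracted, so that $\pi_1$ of the punctured cone $C\setminus\C^{n-l-1}\times\{O_{C'}\}$ is indeed $\pi_1(Y')$ and the bound $\Theta(C')^{-1}$ is exactly the density of $C$. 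Everything else — the Hörmander/diagonal construction to descend sections, the triviality-of-flat-connection observation of Remark~\ref{rmk3-2}, and the density lower bound — is quoted verbatim from the earlier part of the paper.
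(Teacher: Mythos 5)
Your proposal matches the paper's proof, which is given in a single sentence: follow the induction of Lemma~\ref{cart}, replacing the threshold $\Theta(Z,p)>1/2$ (which forces the cyclic-cover degree $m$ to equal $1$) by the bound $m\leq \lfloor \Theta(C)^{-1}\rfloor\leq\Lambda$ supplied by Lemma~\ref{lem3-3} together with~(\ref{eqn2-1}). Your account of how the covering degrees accumulate along a chain of strata of length at most $n-1$ is exactly what the authors have in mind, and the remark immediately following the proposition acknowledges that the exponent $n-1$ is a non-optimal artifact of this induction.

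One point is glossed over both in your write-up and in the paper's one-line proof. You say the lcm of the local Cartier indices at the non-vertex points ``divides'' $\Lambda^{l-1}$; but the inductive hypothesis you formulate gives only the inequality $ind(q,L_C)\leq \Lambda^{l-1}$, not a divisibility, and the lcm of a collection of integers each at most $\Lambda^{l-1}$ need not divide $\Lambda^{l-1}$, nor even be bounded by it. Consequently it is not immediate that $L_C^{\Lambda^{l-1}}$ is a genuine line bundle on $C\setminus \C^{n-l-1}\times\{O_{C'}\}$, which is what you need to form the cyclic cover. In Lemma~\ref{cart} this subtlety is invisible because the inductive hypothesis there forces every local index to equal $1$. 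To carry out the induction rigorously one would want to replace the inequality in the inductive hypothesis by a divisibility statement (and check that it propagates through the cyclic-cover step) or otherwise control the lcm; the paper's ``easily follow the proof of Lemma~\ref{cart}'' leaves this to the reader exactly as your proposal does.
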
 
   
   \begin{proof}
     By general results from Cheeger-Colding-Tian theory we know that if a metric cone in $\mathcal I_p$ splits off isometrically a factor $\C^{n-1}$ then it must be isometric to $\C^n$. Using this, we can easily follow the proof of Lemma \ref{cart}, and apply (\ref{eqn2-1}) to get the conclusion.
   \end{proof}
   
  \begin{rmk}
Notice this bound is certainly not optimal. One expects the exponent $n-1$ should not appear. We need it here due to purely technical reason since we used the induction argument.  In general at a singular point $p$, similar to the proof of Lemma \ref{cart}, one can always take the covering of the regular part of $C_p$ with degree equal to $ind(O_p, L_{C_p})$, and then this yields a bound of the Cartier index if one understands the metric completion of the covering well-enough to show  the Bishop-Gromov inequality holds.  This is a non-trivial question and we leave it for future study.
  \end{rmk}

 \section{KE moduli of degree four del Pezzos}\label{dP4}
 
 Let $X$ be a smooth $n$-dimensional del Pezzo manifolds of del Pezzo degree $d=4$. Then, by classification, $X$ is realized as a smooth complete intersection of two quadrics $X=Q_1 \cap Q_2$ in $\C\P^{n+2}$.

 Now we recall that, more generally, a \emph{Del Pezzo variety} is a $\Q$-Fano variety with Gorenstein canonical singularities such that $K_X^{-1}=H^{n-1}$ for some ample line bundle $H$. Then clearly $c_1^n(-K_X)=d(n-1)^n$. The classification theory of T. Fujita extends also to this singular case \cite{TFuj}. In particular when $d=4$ we still get intersection of two quadrics in $\C\P^{n+2}$.
 
 By (\ref{eqn2-1}), we see that if $n>3$, $d=4$, then $d(n-1)^n> \frac{1}{2} (n+1)^n$. So by Corollary \ref{cor3-6} we know any GH limit of  K\"ahler-Einstein del Pezzo manifolds of degree 4 in dimension $n>3$ must be again a del Pezzo variety of degree 4, in the above sense. In combination with T. Fujita's classification \cite{TFuj} of Gorenstein canonical Fano variety with large Fano index, we immediately obtain:

 \begin{cor} \label{GH4}
 	For any dimension $n>3$, a GH limit of   KE del Pezzo manifolds of degree $4$ is always an intersection of two quadrics in $\C \P^{n+2}$.  
 \end{cor}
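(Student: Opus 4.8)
The plan is to feed the del Pezzo degree into the volume bound of Section \ref{indexbound} and then invoke T.~Fujita's classification of del Pezzo varieties; all of the analytic difficulty is already packaged into Corollary \ref{cor3-6}, so what remains is essentially bookkeeping. First I would record the numerics. An $n$-dimensional del Pezzo manifold of degree $d=4$ satisfies $K_X^{-1}=L^{n-1}$ with $c_1^n(L)=4$, hence $V:=c_1^n(-K_X)=4(n-1)^n$, and it belongs to $\mathcal K_{n, V, n-1}$. The hypothesis $V>\tfrac12(n+1)^n$ of Corollary \ref{cor3-6} is equivalent to $\big(\tfrac{n+1}{n-1}\big)^n<8$; writing $m=n-1$ this reads $\big(1+\tfrac2m\big)^{m+1}<8$, whose left-hand side equals exactly $8$ at $m=2$ (that is, $n=3$), is decreasing in $m$, and tends to $e^2<8$ as $m\to\infty$. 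Thus the inequality holds for every $n>3$, which is exactly where the dimension restriction enters.

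Next I would apply Corollary \ref{cor3-6} with $r=n-1$: a GH limit $Z$ of KE del Pezzo manifolds of degree $4$ in dimension $n>3$ is a $\Q$-Fano variety with Gorenstein canonical singularities such that $K_Z=L_Z^{-(n-1)}$ for a genuine line bundle $L_Z$. Since $L_Z^{n-1}=-K_Z$ is ample, $L_Z$ itself is ample, and $c_1^n(L_Z)=V/(n-1)^n=4$; hence $Z$ is precisely a (Gorenstein canonical) del Pezzo variety of degree $4$ in the sense recalled at the start of this section. It then remains to invoke T.~Fujita's structure theory \cite{TFuj}, which holds in this Gorenstein canonical generality: a del Pezzo variety of degree $4$ is a complete intersection $Q_1\cap Q_2$ of two quadrics in $\C\P^{n+2}$, the embedding being the one defined by $|L_Z|$. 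This is the assertion of the corollary.

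The step deserving the most care is the final appeal to Fujita: one must verify that his classification genuinely covers varieties whose singularities are Gorenstein canonical (and not merely smooth or terminal), and that $L_Z$ provides the correct polarization --- that $h^0(Z,L_Z)=n+3$ and that $|L_Z|$ is very ample, so that the ambient projective space is exactly $\C\P^{n+2}$ and $Z$ sits inside it as $Q_1\cap Q_2$. Apart from this algebro-geometric bookkeeping there is no genuine obstacle, since the hard analytic input --- Liu's volume estimate, the density bound (\ref{eqn2-1}), and the Cartier-index analysis of Section \ref{indexbound} --- has already been condensed into Corollary \ref{cor3-6}.
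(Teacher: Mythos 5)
Your proposal follows exactly the paper's own route: verify the numerical inequality $4(n-1)^n>\tfrac12(n+1)^n$ for $n>3$, apply Corollary \ref{cor3-6} to conclude $Z$ is Gorenstein canonical with $-K_Z=L_Z^{n-1}$ for a genuine ample line bundle $L_Z$ of degree $4$, and then invoke T.~Fujita's classification of (singular) del Pezzo varieties. The only additions are a more careful verification of the numerics via the substitution $m=n-1$ and an explicit flag of the Fujita-classification caveat, neither of which changes the argument.
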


 The case $n=3$ does not follow immediately, since we hit the equality case in the volume estimate \ref{eqn2-1}. To deal with it, some ad hoc consideration, based on rigidity results,  is required.
 
 \subsection{GH limits in the three dimensional case}
 
 Our next goal is to show that Corollary \ref{GH4} also holds for $n=3$. Namely:
 
 \begin{prop} \label{dP43D}
 Let $Z$ be a GH limit of three dimensional smooth K\"ahler-Einstein del Pezzo manifolds of degree $4$, then $Z$ is a del Pezzo $3$-fold of degree $4$.
 \end{prop}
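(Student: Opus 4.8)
The plan is to reduce to Proposition~\ref{large} and to deal separately with the boundary case of the volume estimate. Here $n=3$ and $V=c_1^3(-K_Z)=4\cdot 2^3=32=\tfrac12(n+1)^n$, so (\ref{eqn2-1}) only gives $\Theta(Z,p)\ge\tfrac12$ for every $p\in Z$ and Corollary~\ref{cor3-6} does not apply directly. If $\Theta(Z,p)>\tfrac12$ at \emph{every} singular point, then Proposition~\ref{large} shows $Z$ has Gorenstein canonical singularities, $L_Z$ is a genuine ample line bundle with $K_Z^{-1}=L_Z^2$ and $L_Z^3=4$; hence $Z$ is a del Pezzo variety of degree $4$ and, by T.~Fujita's classification \cite{TFuj}, it is an intersection of two quadrics in $\C\P^5$. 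So everything comes down to excluding singular points $p$ with $\Theta(Z,p)=\tfrac12$ whose metric tangent cone fails to be Gorenstein.

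Assume $\Theta(Z,p)=\tfrac12$, and let $C_p=C(Y)$ be the metric tangent cone. At any non-vertex point of $C(Y)$ the density is $\ge\tfrac12$: if it is $>\tfrac12$ we conclude as in Lemma~\ref{cart}, and if it equals $\tfrac12$ the tangent cone there splits off a factor $\C$ by the splitting theorem while the transverse $2$-dimensional Calabi--Yau cone has density $\tfrac12$, hence equals $\C^2/\Z_2$ (the only such, by the $ADE$ classification), so by Lemma~\ref{ind} the Cartier index is $1$ there. Thus, by Lemma~\ref{ind} applied at $p$, it suffices to bound the Cartier index of $L_{C(Y)}$ at the vertex of $C(Y)$; running the induction of Lemma~\ref{cart}, but now allowing $\pi_1$ of the relevant links to have order up to $\Theta^{-1}=2$ by Lemma~\ref{lem3-3}, one gets that this index is $1$ or $2$, and the value $2$ must be ruled out.

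To do so I would use that, in complex dimension three, $C(Y)$ is quasi-regular \cite{DS14}: it is the affine cone over a K-polystable del Pezzo orbifold surface $(F,L)$ with $-K_F\sim_{\Q}rL$, and then $\Theta(C(Y))=r^3L^2/27$. Combining $\Theta(C(Y))=\tfrac12$ with Liu's volume estimate applied to $F$ — whose equality case on a surface is Fujita's rigidity $F\cong\C\P^2$ \cite{KFuj}, and which, applied at a singular point of $F$ (using that a non-smooth klt surface singularity has normalized volume $\le 2$), forces $(-K_F)^2\le\tfrac92$ in the singular case — together with the requirement that $L$ be an integral Weil divisor, a short finite check shows that the only possibility with Cartier index $2$ at the vertex is $F=\C\P^2$, $L=\O(2)$, i.e. $C(Y)=\C^3/\Z_2$, the tangent cone of a $\tfrac12(1,1,1)$ point. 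But $\C^3/\Z_2$ is an isolated quotient singularity of dimension $3$, hence rigid by Schlessinger's theorem; therefore both steps of the two-step degeneration $(Z,p)\rightsquigarrow W\rightsquigarrow C(Y)$ of \cite{DS15} are trivial, so $(Z,p)\cong\C^3/\Z_2$. This contradicts the fact that, being a Gromov--Hausdorff limit of smooth Fano manifolds, $Z$ is $\Q$-Gorenstein smoothable near $p$. Hence the index at the vertex is $1$, $L_Z$ is Cartier at $p$, and $p$ is Gorenstein canonical. Since this holds at every point, $Z$ is Gorenstein canonical with $K_Z^{-1}=L_Z^2$ and $L_Z^3=4$, so $Z$ is a del Pezzo $3$-fold of degree $4$.

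I expect the main obstacle to be this last step: the numerical classification of density-$\tfrac12$ Calabi--Yau $3$-cones — ruling out, via Liu's estimate on the surface base and integrality of the polarization, everything except the quotient cone $\C^3/\Z_2$ — together with making rigorous the implication ``rigid central fibre $\Rightarrow$ trivial degeneration'' for the two-step construction. Note that the hypothesis that $Z$ is a limit of \emph{smooth} del Pezzos, rather than of arbitrary K-polystable Fanos, enters only through $\Q$-Gorenstein smoothability, and precisely in order to discard the rigid cone $\C^3/\Z_2$.
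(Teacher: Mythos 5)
Your overall plan is right in outline and shares the paper's key endgame idea (Schlessinger rigidity of $\C^3/\Z_2$ against $\Q$-Gorenstein smoothability), but there are two genuine gaps where the argument as written would not go through.

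First, at a non-vertex singular point $q$ of $C(Y)$ with tangent cone $\C\times\C^2/\Z_2$ you assert that ``by Lemma~\ref{ind} the Cartier index is $1$ there.'' Lemma~\ref{ind} only transfers the problem: it says the Cartier index of $L$ at $q$ equals the Cartier index of $L_{C_q}$ at the vertex of $C_q=\C\times\C^2/\Z_2$. That cone is Gorenstein as a \emph{variety} (it is the hypersurface $x^2+y^2+z^2=0$ in $\C^4$), so $K$ is Cartier there; but the limit $\Q$-line bundle $L_{C_q}$ is a square root of $\O$, and $\pi_1$ of the smooth locus of $\C\times\C^2/\Z_2$ is $\Z_2$, so a nontrivial square root exists and is not ruled out by any density or classification argument. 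Proving that $L_{C_q}$ is in fact the trivial square root is exactly Proposition~\ref{nonisolated}, whose proof uses a substantial slicing argument: one produces, via the two-step degeneration and versality of the $2$-dimensional ODP deformation, nearby slices $S_i\subset X_i$ that are simply connected Stein neighborhoods of a vanishing $S^2$, extracts an $N$-th root of the trivializing section of $L^N$ over $S_i$, and lets this root pass to the limit to trivialize the flat holonomy of $L_{C(Y)}$ on the link $S^3/\Z_2$. Nothing in your paragraph~2 substitutes for this, and the numerical classification of density-$\tfrac12$ cones in paragraph~3 cannot see it either, since the ambiguity lives in the torsion of the line bundle, not in the geometry of the cone.

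Second, the step ``in complex dimension three, $C(Y)$ is quasi-regular \cite{DS14}'' is not what \cite{DS14} proves: it shows the link of a tangent cone is a $5$-dimensional Sasaki--Einstein \emph{orbifold}, which may well be irregular, so the reduction to a polarized orbifold del Pezzo base $(F,L)$ with $\Theta=r^3L^2/27$ is unjustified. It is also unnecessary: in the isolated-singularity case the paper proceeds more economically, applying Bishop--Gromov to the universal cover of $Y$ to get $\pi_1(Y)\in\{1,\Z_2\}$, and then using Bishop--Gromov \emph{rigidity} (volume equal to $\mathrm{Vol}(S^5)$ forces $\widetilde Y$ to be the round $S^5$) to pin down $C(Y)\cong\C^3/\Z_2$ directly, after which Schlessinger rigidity gives the contradiction — bypassing quasi-regularity, Liu's estimate on the base, and the orbifold bookkeeping entirely. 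Finally, a small correction to your closing remark: the smoothability hypothesis is used not only to exclude $\C^3/\Z_2$, but also crucially in Proposition~\ref{nonisolated}, via the versal deformation of the $2$d ODP and the simply connectedness of the Milnor fibre of the hyperplane slice.
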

 
Again by Fujita's classification it suffices to show that on any GH limit $Z$, the corresponding limit bundle $L_Z$ is a genuine line bundle on the whole $Z$. 

\

If $p \in Z$ is a singular point in the GH limit with density $\Theta(Z,p)> 1/2$, then Proposition \ref{large} still apply, giving that $L_Z$ is a line bundle near $p$ and that the singularity is Gorenstein canonical.

 	We remark that in this three dimensional case there is another argument. Note that in such case the link of the tangent cone  $Y$ (clearly necessarily smooth, by semicontinuity of densities) is simply connected (again by volume comparison). Moreover by  Cheeger-Tian \cite{CT} and Colding-Minicozzi \cite{CM})  we have $Y\times \R$ is diffeomorphic  a punctured neighborhood of the singularity, so to  $M\times \R$, where  $M$ is to the classical Milnor's link obtained by intersecting the germ  with a small euclidean sphere in an embedding of the singularity.  Hence $M$ is also simply connected, and this is also enough to have our local index bound at $p$.  
 
 \begin{rmk}
 	It may be interesting to note that, even if a-priori it is not clear that the link of the cone $Y$ and the Milnor's link $M$ at the singularity are homeomorphic, the fact that they are actually diffeomorphic  follows easily (see uniqueness statement in the main Theorem of \cite{Br}) by using  the \emph{$h$-cobordism Theorem}. We thanks Marcel B\"okstedt to point such reference to us.
 \end{rmk}
 
 We now assume that $\Theta(Z,p)= 1/2$. Here there are two possible sub-cases:
 
 \subsubsection*{Case 1:  $\Theta(Z,p) = \frac{1}{2}$ and $C(Y)$ has an isolated singularity;}
 
 The same argument as before works to imply that the singularity is Gorenstein canonical and $L$ is a line bundle near $p$, due to the following lemma. 
 
 \begin{lem}
 In this case the link $Y$ is simply connected. 
 \end{lem}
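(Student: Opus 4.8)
The plan is to use Lemma~\ref{lem3-3} to bound $\pi_1(Y)$, then a volume-doubling argument to pin down the unique possible exceptional tangent cone, which I then eliminate using the del Pezzo index. First I would apply Lemma~\ref{lem3-3} to $C=C(Y)$ itself (here $k=0$: since $C(Y)$ has an isolated singularity it splits off no flat factor $\C$, and $Y$ is a smooth Sasaki--Einstein $5$-manifold). As $\Theta(C(Y))=\Theta(Z,p)=\tfrac12$, this yields that $\pi_1(Y)$ is finite of order at most $2$, so it suffices to rule out $\pi_1(Y)\cong\Z/2\Z$.

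Suppose $\pi_1(Y)\cong\Z/2\Z$ and let $\widetilde Y\to Y$ be the universal (double) cover. Pulling back the Sasaki--Einstein structure makes $\widetilde Y$ a smooth Sasaki--Einstein $5$-manifold, so $C(\widetilde Y)$ is a Ricci-flat K\"ahler cone with at worst an isolated singularity at the vertex; moreover taking the normalization of $C(Y)$ corresponding to the degree-two \'etale cover of $C(Y)^{sm}$ realizes $C(\widetilde Y)$ as a normal affine variety, finite over $C(Y)$ and branched only at the vertex. Since $\widetilde Y\to Y$ is a two-sheeted Riemannian cover, $\mathrm{Vol}(\widetilde Y)=2\,\mathrm{Vol}(Y)$, and $\mathrm{Vol}(Y)=\Theta(C(Y))\,\mathrm{Vol}(S^{5})=\tfrac12\,\mathrm{Vol}(S^{5})$, so $\mathrm{Vol}(\widetilde Y)=\mathrm{Vol}(S^{5})$; as $\widetilde Y$ is a smooth Einstein $5$-manifold with $\mathrm{Ric}=4g$, Bishop's rigidity theorem forces $\widetilde Y\cong S^{5}$ and $C(\widetilde Y)\cong\C^{3}$ with the flat metric. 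The deck transformation is then a holomorphic isometry of $\C^{3}$ fixing the origin, i.e.\ an element of $U(3)$, acting freely on $\C^{3}\setminus\{0\}$; the only such involution is $-\mathrm{Id}$, so $C(Y)\cong\C^{3}/\{\pm\mathrm{Id}\}$.

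It remains to derive a contradiction from $C(Y)\cong\C^{3}/\{\pm\mathrm{Id}\}$. One computes $\mathrm{Cl}(\C^{3}/\{\pm\mathrm{Id}\})\cong\Z/2\Z$, generated by a class $\tau$, and the anticanonical class is $[K_{C(Y)}^{-1}]=\tau$ (the involution acts on $\Lambda^{3}$ of the standard representation by $\det(-\mathrm{Id})=-1$; equivalently, on the smooth locus, whose fundamental group is $\Z/2\Z$, the flat line bundle $K^{-1}$ is the nontrivial one). On the other hand, since the $X_i$ are del Pezzo of index $n-1=2$, the limit $\Q$-line bundle satisfies $L_Z^{2}\cong K_Z^{-1}$; this identity is witnessed on the smooth locus by a nowhere-vanishing holomorphic, indeed parallel, section which persists in the rescaled Gromov--Hausdorff limit defining $C(Y)$ exactly as in the proof of Lemma~\ref{ind} and Remark~\ref{rmk3-2}, so it descends to $L_{C(Y)}^{2}\cong K_{C(Y)}^{-1}$ on $C(Y)^{sm}$, i.e.\ $2[L_{C(Y)}]=\tau$ in $\mathrm{Cl}(C(Y))=\Z/2\Z$. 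Since $2[L_{C(Y)}]=0$ there, we obtain $\tau=0$, a contradiction; hence $\pi_1(Y)$ is trivial.

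The main obstacle will be this last transfer step: one must check carefully that the defining relation $L_Z^{\,n-1}\cong K_Z^{-1}$, together with the flatness of the relevant Chern connections, genuinely passes to the metric tangent cone — this is where one leans on the convergence results of \cite{DS15} underlying Lemma~\ref{ind} and Remark~\ref{rmk3-2} — and that $C(\widetilde Y)$ is honestly a complex-analytic Calabi--Yau cone, so that density-one rigidity can be invoked in the K\"ahler category. The remaining ingredients — Lemma~\ref{lem3-3}, volume-doubling under finite covers, the class-group computation for $\C^{3}/\{\pm\mathrm{Id}\}$, and the classification of free unitary involutions — are elementary.
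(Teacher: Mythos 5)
Your proposal is correct and agrees with the paper's proof through the first two moves: Bishop--Gromov (packaged for you in Lemma~\ref{lem3-3}, applied with $k=0$) shows $|\pi_1(Y)|\le 2$, and if $\pi_1(Y)=\Z_2$ then the universal cover $\widetilde Y$ attains the equality in volume comparison, so $\widetilde Y\cong S^5$ and $C(Y)\cong\C^3/\Z_2$ with $\Z_2=\{\pm\mathrm{Id}\}$ (your observation that $-\mathrm{Id}$ is the only free unitary involution is the correct justification, which the paper leaves implicit). Where you diverge is the final contradiction. The paper dispatches $\C^3/\Z_2$ with one line: Schlessinger's rigidity of quotient singularities in dimension $\ge 3$ \cite{Sc} means $\C^3/\Z_2$ admits no nontrivial deformations, but by the two-step construction of \cite{DS15} it would be a degeneration of $W$, itself a degeneration of the local germ $(Z,p)$, and $(Z,p)$ is smoothable since $Z$ is a limit of smooth $X_i$ --- so $\C^3/\Z_2$ would be smoothable, contradicting rigidity. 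Your argument instead exploits the del Pezzo index: since $K_Z^{-1}=L_Z^{\,n-1}=L_Z^2$ passes (as flat Hermitian line bundles on the smooth locus) to the scaling limit, you would need $2[L_{C(Y)}]=[K_{C(Y)}^{-1}]$ in $\mathrm{Cl}(\C^3/\Z_2)\cong\Z_2$, which fails because $2[L_{C(Y)}]=0$ while $[K_{C(Y)}^{-1}]$ is the nontrivial class (detected by $\det(-\mathrm{Id})=-1$). Both routes are valid. The Schlessinger argument is cleaner and, importantly, does not use the del Pezzo index at all --- it applies to any smoothable klt germ, which is why the paper reuses the same rigidity input in the proof of Theorem~\ref{thm5-2}. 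Your argument is more in the spirit of Lemma~\ref{cart}, Remark~\ref{rmk3-2} and Proposition~\ref{nonisolated}, where the limit (and triviality) of the polarizing bundle on tangent cones is tracked carefully; the step you correctly flag as requiring care --- that $L_Z^{2}\cong K_Z^{-1}$ persists as a relation of flat bundles on $C(Y)^{sm}$ --- is exactly the kind of thing the paper establishes with the H\"ormander/gradient-estimate techniques of \cite{DS14,DS15}, so it is a defensible but longer route than the one-line algebraic rigidity the authors chose.
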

 \begin{proof}
 Applying the Bishop-Gromov volume comparison to the universal cover of $Y$ we obtain that either $Y$ is simply connected, or $\pi_1(Y)=\Z_2$. In the second case the universal cover of $Y$ is isometric to the round sphere $S^5$ with radius one. So the tangent cone $C(Y)$ is the orbifold $\C^3/\Z_2$ where the action is free on $S^5$. 
But this is impossible by Schlessinger's rigidity of quotient singularities \cite{Sc}, since the cone is a degeneration of $W$, which is itself a degeneration of the smoothable local germ $Z$ at $p$.
 \end{proof}

 \subsubsection*{Case 2: $\Theta(Z,p) = \frac{1}{2}$ and $C(Y)$ has non-isolated singularity;}
 
 The first observation is based on the following general rigidity splitting Lemma for CY tangent cones we mention in Section \ref{prelim}.
 
 \begin{lem}\label{rig}
 	Let $ q (\neq 0) \in C(Y)$ be a point where $\Theta(C(Y),q)= \Theta(C(Y),0)$. Then $C(Y)$ splits isometrically as $ C(Y')\times \C$.
 \end{lem}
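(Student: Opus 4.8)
The plan is to promote the equality of densities to the statement that $C(Y)$ is a metric cone having $q$ as a \emph{second} vertex, and then to combine the existence of two distinct cone points with the Calabi--Yau structure to split off a $\C$-factor. Throughout write $O$ for the vertex of $C(Y)$ and $\rho=d(O,q)>0$.

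First I would run a Bishop--Gromov rigidity argument. Since the cone metric has $\mathrm{Ric}\ge 0$, the ratio $r\mapsto \mathrm{Vol}(B_r(q))/(\omega_{2n}r^{2n})$ is non-increasing, and its limit as $r\to 0^+$ is $\Theta(C(Y),q)$. On the other hand, sandwiching $B_r(q)$ between $B_{r-\rho}(O)$ and $B_{r+\rho}(O)$ and using that the corresponding ratio based at $O$ is identically $\Theta(C(Y),O)$ (because $C(Y)$ is a cone with vertex $O$), the ratio based at $q$ tends to $\Theta(C(Y),O)$ as $r\to\infty$. Monotonicity therefore gives $\Theta(C(Y),q)\ge\Theta(C(Y),O)$ in general, and under our hypothesis it forces the ratio based at $q$ to be \emph{constant} in $r$. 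By the volume-cone-implies-metric-cone rigidity of Cheeger--Colding \cite{CC} — valid here because $C(Y)$ is a non-collapsed Ricci limit, or alternatively by running the Bishop--Gromov proof directly on the regular locus as in the proof of Lemma \ref{lem3-3} — I conclude that $C(Y)$ is isometric to a metric cone with vertex $q$ as well.

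Now $C(Y)$ carries two cone structures, with distinct vertices $O\ne q$. On the regular locus $\mathcal R$ of $C(Y)$ each of the functions $\tfrac12 d(O,\cdot)^2$ and $\tfrac12 d(q,\cdot)^2$ has Hessian equal to the metric $g$ — the defining property of a cone, computed in the smooth cone coordinates centred at the respective vertex (recall that on a metric cone the distance to the vertex is smooth away from the vertex, there being no cut points along radial rays). Hence $h:=\tfrac12 d(O,\cdot)^2-\tfrac12 d(q,\cdot)^2$ satisfies $\nabla^2h\equiv 0$ on $\mathcal R$, so $\nabla h$ is parallel; its length is constant and equals $\rho$ (let $x\to O$ along a radial ray: the $O$-term of $\nabla h$ has norm $d(O,x)\to 0$ while the $q$-term has norm $d(q,x)\to d(q,O)=\rho$), hence $\nabla h$ is a non-vanishing parallel field and yields an isometric splitting $C(Y)\cong\R\times N$. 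To obtain the full $\C$-factor I would then invoke that on $\mathcal R$ the Ricci-flat Kähler structure has parallel complex structure $J$, so $J\nabla h$ is also parallel and orthogonal to $\nabla h$; the parallel, integrable, totally geodesic $2$-plane distribution they span upgrades the $\R$-splitting to $C(Y)\cong\C\times N$, the passage across $\mathrm{Sing}(C(Y))$ being handled by its codimension $\ge 4$ bound together with the de Rham-type argument on $\mathcal R$, exactly as such splittings are treated in \cite{CCT, DS15}.

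Finally, $N$ is again Ricci-flat Kähler, and the homothety of $C(Y)$ centred at $O$ preserves the product decomposition (it is a homothety on the $\C$-factor about the image of $O$), hence restricts to a homothety of $N$ about a fixed point; thus $N$ is itself a metric cone, $N=C(Y')$, and $C(Y)\cong C(Y')\times\C$, as claimed. I expect the only genuinely delicate points to be (i) deducing the honest metric-cone structure with vertex $q$ on the a priori singular space $C(Y)$ from the mere constancy of the volume ratio, and (ii) promoting the $\R$-splitting to a holomorphic $\C$-splitting across the singular set; both are, however, routine applications of the Cheeger--Colding machinery already used in the earlier lemmas of this section.
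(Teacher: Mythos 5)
Your proof is correct, and its first half coincides with the paper's: both run Bishop--Gromov rigidity on the volume ratio $f(r)=\mathrm{Vol}(B_r(q))/(\omega_{2n}r^{2n})$, pinning $\lim_{r\to 0^+}f=\Theta(C(Y),q)$ and $\lim_{r\to\infty}f=\Theta(C(Y),0)$ (you spell out the ball-sandwiching, the paper just asserts the density at infinity equals $\mathrm{Vol}(Y)/\mathrm{Vol}(S^{2n-1})$), so $f$ is constant and Cheeger--Colding's volume-cone-implies-metric-cone gives a second cone structure with vertex $q$. After that the routes diverge. The paper goes metric-space theoretic: the two cone structures at $O\ne q$ force antipodal directions at $q$, hence a genuine \emph{line}, and the (almost) splitting theorem for non-collapsed Ricci limits yields the product factor directly, the upgrade from $\R$- to $\C$-factors being the usual Cheeger--Colding--Tian statement for K\"ahler limits; this avoids any discussion of smoothness of distance functions. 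You instead work on the regular locus $\mathcal R$ with the classical Hessian identity $\nabla^2\bigl(\tfrac12 d(\cdot,\text{vertex})^2\bigr)=g$ for each cone structure, producing the parallel field $\nabla h$ (and $J\nabla h$ via parallelism of $J$) and then a de Rham splitting. This is a perfectly valid and arguably more transparent route, and it has the virtue that it makes explicit why the complementary factor $N$ is again a cone (via the homothety fixing $O$), a point the paper leaves implicit; the price is that you must justify (a) that $d(q,\cdot)$ is smooth on $\mathcal R$ with the claimed Hessian (true because the cone structure at $q$ identifies it with a radial coordinate, with no cut locus on rays) and (b) that the de Rham splitting on $\mathcal R$ extends across the codimension $\ge 4$ singular set, which you correctly delegate to \cite{CCT, DS15}. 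In short: same first step, then a smooth/parallel-field argument in place of the line/splitting-theorem argument; both are sound.
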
 
 
 \begin{proof}
 	Consider the volume ratio $f(r)=\frac{Vol^{CY}_q(B(r))}{\omega_{2n} r^{2n}}$ centered at $q$. Then $f$ is non-increasing and  $\Theta(C(Y),q)=\lim_{r\rightarrow 0^+} f(r)$. On the other end, the volume density at infinity is always equal to $\lim_{ r\rightarrow +\infty} f(r)=\frac{Vol(Y)}{Vol(S^{2n-1}(1))}$. Thus $f(r)$ is identically constant, i.e., there is a  \emph{volume cone} centered at $q$. But, by Cheeger-Colding theory,  any volume cone is actually a metric cone. Thus there is a \emph{line} going through $q$ and hence, by the almost splitting Theorem, we obtain the result.
 \end{proof}
 
 Applying the Lemma to our case, we have that $C(Y)$ splits as $\C \times \left(\C^2/\Z_2\right)$, i.e., the cone  is locally analytically given by the equation $x^2+y^2+z^2=0$ in $\C^4$.
 
 We now prove the following more general proposition, which will be useful also in the discussion of Section \ref{OtherKE}.
 
 \begin{prop}\label{nonisolated}
 Let $Z'$ be a pointed rescaled Gromov-Hausdorff limit of a sequence of elements in $\mathcal K_{n, V, r}$ (as in the beginning of Section \ref{indexbound}), suppose a point $p\in Z'$ has metric tangent cone $\C^2/\Z_2\times \C^{n-2}$, then the limit $L_{Z'}$ is Cartier near $p$ and hence $p$ is a Gorenstein canonical singularity of $Z'$.
 \end{prop}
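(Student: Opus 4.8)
\medskip

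The plan is to reduce everything, via Lemma~\ref{ind}, to a statement about the single limit line bundle $L_{C_p}$ on the metric tangent cone $C_p=\C^2/\Z_2\times\C^{n-2}$, and then to kill a residual $\Z_2$-ambiguity by a topological argument exploiting that $C_p$ is approximated by smooth (hence simply connected) Fano manifolds. First I would record that the smooth locus is $C_p^{sm}=\big((\C^2\setminus\{0\})/\Z_2\big)\times\C^{n-2}$, homotopy equivalent to $\mathbb{RP}^3$, so $\pi_1(C_p^{sm})\cong\Z_2$ --- and note that Lemma~\ref{lem3-3} (with $\Theta(C_p)=\tfrac12$) only bounds $|\pi_1|$ by $2$, so something extra is needed. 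Since $C_p$ arises as a pointed rescaled Gromov--Hausdorff limit with scaling factors tending to infinity (the diagonal argument of Section~\ref{indexbound}), its limit Chern connection is flat on $C_p^{sm}$, with holonomy $\rho\colon\pi_1(C_p^{sm})=\Z_2\to U(1)$ taking values in $\{\pm1\}$; thus $L_{C_p}^{2}\cong\mathcal O_{C_p}$. Because $\mathrm{Pic}(C_p)=0$, a power of $L_{C_p}$ is Cartier at the vertex iff it is holomorphically trivial on $C_p^{sm}$, so $ind(O_p,L_{C_p})$ is the order of $L_{C_p}$ in $\mathrm{Pic}(C_p^{sm})$ --- equal to $1$ or $2$, and equal to $1$ precisely when $\rho$ is trivial (this is the covering-space reasoning from the proof of Lemma~\ref{cart}, which now fails to force the order to be $1$ because we are exactly at the equality $\Theta=\tfrac12$). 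By Lemma~\ref{ind}, $ind(p,L_{Z'})=ind(O_p,L_{C_p})$, so it suffices to prove $\rho$ is trivial. (If $r$ is odd this is automatic from $L_{C_p}^{2}\cong\mathcal O_{C_p}$ together with $L_{C_p}^r\cong K_{C_p}^{-1}\cong\mathcal O_{C_p}$; the real content is the even case, which covers the degree-four del Pezzo application where $r=2$.)

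To prove $\rho$ is trivial, realise $C_p$ as a pointed limit of smooth K\"ahler--Einstein Fano manifolds $(X_i,c_i\omega_i,p_i)$ with $c_i\to\infty$ and $\mathrm{Ric}(\omega_i)=\lambda\omega_i$, and fix a generator $\ell$ of $\pi_1(C_p^{sm})$. Since the convergence is smooth away from the singular stratum $\{0\}\times\C^{n-2}$, $\ell$ lifts to loops $\ell_i\subset X_i$ with $\mathrm{hol}_{L_{C_p}}(\ell)=\lim_i\mathrm{hol}_{L_{X_i}}(\ell_i)$. Each $X_i$ is Fano, hence simply connected, so $\ell_i$ bounds a disc $D_i\subset X_i$; the crucial point is to choose $D_i$ with $\mathrm{Area}_{c_i\omega_i}(D_i)\le C$ uniformly in $i$. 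This should follow from the local structure of the degeneration near $p$: in $C_p$ the loop $\ell$ bounds a cone-disc of bounded area, and the only obstruction to transporting that disc into $X_i$ is its passage through the transverse $A_1$ locus, which sits at the smoothing scale --- vanishing as $i\to\infty$ --- where $X_i$ is modelled on the standard smoothing of $\C^2/\Z_2\times\C^{n-2}$, diffeomorphic to $T^*S^2\times\C^{n-2}$, in which the $\Z_2$ loop bounds a cotangent-fibre disc of area comparable to the square of the smoothing scale. Normalising the Hermitian metric on $L_{X_i}$ so that its Chern curvature equals $\tfrac1r$ that of $K_{X_i}^{-1}$, i.e.\ $\tfrac{\lambda}{r}\omega_i$, the curvature bound gives $|\mathrm{hol}_{L_{X_i}}(\ell_i)-1|\le\tfrac{\lambda}{r}\,\mathrm{Area}_{\omega_i}(D_i)=\tfrac{\lambda}{rc_i}\,\mathrm{Area}_{c_i\omega_i}(D_i)\le\tfrac{C\lambda}{rc_i}\to 0$. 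Hence $\rho(\ell)=1$, so $L_{C_p}\cong\mathcal O_{C_p}$ and $ind(O_p,L_{C_p})=1$; by Lemma~\ref{ind}, $L_{Z'}$ is Cartier near $p$. Then $K_{Z'}^{-1}=L_{Z'}^r$ is Cartier near $p$, its discrepancies are non-negative integers since $Z'$ is log terminal, so $p$ is canonical, and canonical singularities being Cohen--Macaulay, $p$ is Gorenstein canonical.

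The main obstacle I expect is exactly the uniform area bound for $D_i$: that $\ell_i$ is null-homotopic in $X_i$ comes for free, but controlling the filling area at the correct scale requires the fine structure theory (Cheeger--Colding--Tian, Colding--Naber, Donaldson--Sun) to guarantee that the transverse $A_1$ singularity is smoothed in the standard fashion in the family $X_i\to Z'$ --- so that the vanishing cycle is the $\Z_2$-bounding $S^2$ --- and that the area contributed by the smoothing region is negligible as $i\to\infty$. Granting this geometric input, the algebraic reduction and the curvature estimate are routine.

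\medskip
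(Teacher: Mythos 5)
Your reduction to killing the holonomy $\rho\colon\pi_1(C_p^{sm})=\Z_2\to U(1)$, and the odd-$r$ parity shortcut, match the intended framing; but the step you flag as ``the main obstacle'' is a genuine gap, and it is not closed by the fine-structure results you invoke. Your requirement $\mathrm{Area}_{c_i\omega_i}(D_i)\le C$ means $\mathrm{Area}_{\omega_i}(D_i)\lesssim 1/c_i\to 0$: you need the generating loop to bound a \emph{shrinking} disc in $X_i$, which happens precisely when the transverse $A_1$ is smoothed by a vanishing $S^2$ at the correct scale. The Cheeger--Colding--Tian, Colding--Naber and Donaldson--Sun theory identifies the metric tangent cone and its algebraic realisation, but says nothing a priori about the \emph{deformation type} of the germ $(Z',p)$ inside the family $X_i\to Z'$; in principle the smoothing could fail to be standard, and then no tiny filling disc exists.

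The paper supplies exactly this missing input algebraically rather than metrically. Lemma~\ref{lem4-7} (embedding dimensions are non-increasing along the degenerations $(Z',p)\rightsquigarrow W\rightsquigarrow C(Y)$) gives $\mathrm{Embdim}(Z',p)\le\mathrm{Embdim}(C(Y),0)=n+1$, so $(Z',p)$ is a \emph{hypersurface} germ in $\C^{n+1}$. Cutting by generic hyperplanes down to complex dimension two and invoking \emph{versality} of the miniversal one-parameter deformation $x^2+y^2+z^2=\epsilon$ of the $2$d ODP then forces the slice of $Z'$ through $p$ to be an ODP and the nearby slices $S_i\subset X_i$ to be the standard Stein smoothing (a neighbourhood of the zero section in $T^*S^2$), hence simply connected. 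This is the rigorous substitute for your hoped-for ``smoothed in the standard fashion.'' Once the $S_i$ are known to be simply connected the paper also finishes differently: instead of a curvature--area holonomy estimate it takes an $N$-th holomorphic root of the converging sections of $L_i^N$ over $S_i$ and passes to a parallel section of $L_{C(Y)}$ over the link $S^3/\Z_2$, which kills $\rho$ directly inside the Donaldson--Sun convergence machinery. Your estimate would also close the argument once the area bound were available, but the holomorphic-root route sidesteps producing and controlling any explicit filling disc in $X_i$.
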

 
 We begin with the following Lemma, which is a direct consequence of the arguments in \cite{DS15}. Recall the embedding dimension (denoted by $Embdim(Z, p)$) of a germ of a singularity $(Z, p)$ is the minimal integer $N>0$ such that there is a complex-analytic embedding $F: (Z, p)\rightarrow (\C^N, 0)$.  By general theory (see Grauert-Remmert \cite{GR}), suppose $(Z, p)$ is embedded in some $\C^M$ with $M>Embdim(Z, p)$, then there is a holomorphic function $f$ in a neigborhood of the origin in $\C^M$ that vanishes on $Z$ and has non-vanishing derivative, i.e., $f\in \mathcal I_Z\setminus \mathfrak m_{0}^2$. 
 
 \begin{lem} \label{lem4-7}
 	$$Embdim(Z, p)\leq Embdim(W,0)\leq Embdim(C(Y), 0)$$
 \end{lem}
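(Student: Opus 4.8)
The plan is to establish the two inequalities $Embdim(Z,p)\leq Embdim(W,0)$ and $Embdim(W,0)\leq Embdim(C(Y),0)$ separately, exploiting the chain of flat degenerations $(Z,p)\rightsquigarrow W\rightsquigarrow C(Y)$ recalled in Section \ref{prelim}. The basic principle is semicontinuity: under a flat (in particular $\mathbb C^*$-equivariant) degeneration the embedding dimension, which equals $\dim_{\mathbb C}\mathfrak m/\mathfrak m^2$ for the local ring at the relevant point, can only go up in the limit. So in spirit both inequalities are instances of the same upper-semicontinuity statement applied to the two steps of the construction.

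First I would treat the second inequality, $Embdim(W,0)\leq Embdim(C(Y),0)$. Here $C(Y)$ is obtained from $W$ by an explicit $\mathbb C^*$-equivariant degeneration (the associated graded construction for the weight filtration on $R_p$, as in \cite{DS15}). For such a degeneration one has an embedding of $W$ into $\mathbb C^N$, $N=Embdim(C(Y),0)$, coming from a basis of $\mathfrak m_{C(Y)}/\mathfrak m_{C(Y)}^2$, lifted to weighted coordinates; concretely, one picks holomorphic functions on $W$ that are a basis of the lowest-weight piece and observes their leading terms generate $\mathfrak m_{C(Y),0}/\mathfrak m_{C(Y),0}^2$, which forces them to generate $\mathfrak m_{W,0}/\mathfrak m_{W,0}^2$ by a Nakayama-type argument on the graded ring. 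Equivalently, $R_p=\mathrm{gr}(\mathcal O_{W,0})$ for the weight filtration, and $\dim \mathfrak m/\mathfrak m^2$ of a graded ring is always at least that of a filtered ring whose associated graded it is.

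For the first inequality, $Embdim(Z,p)\leq Embdim(W,0)$: by \cite{DS15} the analytic germ $(Z,p)$ admits a flat degeneration to $W$, so $W$ sits as the central fiber of a flat family over a disk with general fiber $(Z,p)$; upper-semicontinuity of fiber-wise $\dim\mathfrak m/\mathfrak m^2$ in flat families (a standard fact, e.g.\ via coherence of the relative cotangent sheaf) gives the bound. The cleaner route, matching the hint already in the excerpt, is to use the Grauert–Remmert fact stated just before the lemma: if $(Z,p)\hookrightarrow \mathbb C^M$ with $M>Embdim(Z,p)$ then there is $f\in \mathcal I_Z\setminus \mathfrak m_0^2$, i.e.\ a defining equation with nonzero linear part, which lets one eliminate a variable; iterating, one realizes $(Z,p)$ in $\mathbb C^{Embdim(Z,p)}$ and then degenerates this embedding to $W$, so $W\subset \mathbb C^{Embdim(Z,p)}$ and hence $Embdim(W,0)\leq Embdim(Z,p)$ — wait, that is the wrong direction, so instead one argues the contrapositive: the degeneration $(Z,p)\rightsquigarrow W$ shows $(Z,p)$ embeds in $\mathbb C^{Embdim(W,0)}$ (take a $\mathbb C^*$-equivariant embedding of $W$ in $\mathbb C^N$, $N=Embdim(W,0)$, and note the same $N$ coordinates embed the general fiber $Z$ near $p$), whence $Embdim(Z,p)\leq Embdim(W,0)$.

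The main obstacle, and the only point requiring real care, is making the assertion ``the same ambient $\mathbb C^N$ that embeds the central fiber also embeds the nearby fibers'' precise in the analytic category: one must produce an actual family $\mathcal Z\to \Delta$ with an embedding $\mathcal Z\hookrightarrow \mathbb C^N\times\Delta$ extending a given equivariant embedding of the central fiber, which is exactly the content of the construction in \cite{DS15} (the degeneration is realized inside a fixed projective/affine space via the Hilbert scheme or via explicit bases of sections). Granting that, both inequalities reduce to the elementary linear-algebra statement that $\dim\mathfrak m/\mathfrak m^2$ is upper-semicontinuous, and the proof is short. I would therefore write the proof as: (i) recall from \cite{DS15} that the two steps are realized by embeddings into a fixed $\mathbb C^N$ with $N$ the embedding dimension of the respective limit; (ii) invoke semicontinuity of $\dim\mathfrak m/\mathfrak m^2$; (iii) conclude.
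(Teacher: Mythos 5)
Your main route—upper-semicontinuity of $\dim \mathfrak m/\mathfrak m^2$ along the two flat degenerations $(Z,p)\rightsquigarrow W\rightsquigarrow C(Y)$—is valid and does give both inequalities, but it is a genuinely different argument from the paper's. The paper never invokes the general semicontinuity theorem for the relative cotangent sheaf; instead it works concretely with the simultaneous embedding $(Z,p)\hookrightarrow \C^k$ in which $W$ is the weighted tangent cone, and uses the Grauert–Remmert fact quoted just before the lemma to repeatedly project off a coordinate whenever $k>Embdim(W,0)$: given $f\in\mathcal I_W\setminus\mathfrak m_0^2$, one may take $f$ weight-homogeneous, lift it to $g\in\mathcal I_Z$ with the same linear term, and project both $Z$ and $W$ to the complementary hyperplane. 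This produces not only the inequality $Embdim(Z,p)\leq Embdim(W,0)$ but a \emph{simultaneous} embedding of $(Z,p)$ and $W$ into $\C^{Embdim(W,0)}$, which is exactly what is used afterwards in the proof of Proposition~\ref{nonisolated}; your semicontinuity argument proves the numerical inequality but does not by itself furnish this. Your fallback attempt to argue as in the paper is where you actually go astray: after catching the sign error, you assert that ``a $\C^*$-equivariant embedding of $W$ in $\C^N$, $N=Embdim(W,0)$, also embeds the general fiber $Z$'', but that is precisely what the paper's projection argument establishes and cannot simply be asserted. Also, the phrase ``$R_p=\mathrm{gr}(\mathcal O_{W,0})$'' is off: $R_p$ \emph{is} the coordinate ring of $W$ (the associated graded of $\mathcal O_{Z,p}$, not of $\mathcal O_{W,0}$); the degeneration $W\rightsquigarrow C(Y)$ is a further equivariant degeneration as in \cite{DS15}, which is how the paper handles the second inequality with the same linear-term-lifting device.
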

 \begin{proof}
 	We first prove the the first inequality. Notice $W$ is given by the weighted tangent of $(Z, p)$ under some analytic embedding $F: (Z, p)\rightarrow \C^k$, with respect to some weight $w \in ({\R^+})^{k}$. If $k=Embdim(W, 0)$ then we are done. Otherwise, by the above discussion, we find a function $f\in \mathcal I_{W}\setminus \mathfrak m_0^2$. Using the grading on $W$ we may further assume that $f$ is homogenous with respect to the weight $w$, and $f$ has non-vanishing linear term. From the definition of weighted tangent cone, we may find a holomorphic function $g\in \mathcal I_{Z}$ such that $f$ is the initial term of $g$ with respect to $w$. Clearly $g$ also has non-vanishing linear term.  Say this is $z_k$. Then we may project $(Z, p)$ and $W$ to the $z_1, \cdots, z_{k-1}$ plane, and these projections are indeed holomorphic equivalence onto their images. Hence we have realized $W$ as weighted tangent cone of $(Z, p)$ in a smaller $\C^{k-1}$. We can repeat this until we reach the embedding dimension of $W$, and we are done. 
 	
 	For the second inequality, we notice that there is an equivariant test configuration degenerating $W$ to $C(Y)$. One can work purely in a polynomial level, and essential argue as in the above: if there is a function vanishing on $C(Y)$ with non-vanishing linear term, then there must be another function vanishing on $W$ also with non-vanishing linear term. \end{proof}

 A corollary of  Lemma \ref{lem4-7} is that if $C(Y)$ is a hypersurface, then $Z$ is also a hypersurface singularity. Note that it is in general \emph{not} the case that the GH limit $Z$ is also locally isomorphic to $\C\times \C^2/\Z_2$ near $p$: as suggested by \cite{DS15} and \cite{HN}, a K\"ahler-Einstein metric on the three dimensional (isolated) $A_k$ singularity $x^2+y^2+z^2+w^{k+1}=0$ in $\C^4$ should have metric tangent cone $\C\times \C^2/\Z_2$ when $k\geq 3$.

 By \cite{DS15} and the above lemma we can realize $C(Y)$ as the hypersurface $x^2+y^2+z^2=0$ in $\C^4$ and embed a neighborhood of $p$ in $Z$ analytically into $\C^4$ (with $p$ mapped to $0$), such that $W$ is given by the weighted tangent cone of $Z$ at $p$ with respect to the weight $(2, 2, 2, 1)$ (which corresponds to  the Reeb vector field of $C(Y)$), and $C(Y)$ is a further equivariant degeneration limit of $W$. Now, by taking a generic hyperplane $H$ in $\C^4$ passing $0$, we obtain a deformation of $S_\infty=H\cap C(Y)$, which is locally isomorphic to the $2$ dimensional ordinary double point $x^2+y^2+z^2=0$ in $\C^3$, by the corresponding slices in $Z$. Since the versal deformation of 2d ordinary double point is given by $x^2+y^2+z^2=\epsilon$ which is one dimensional, it follows that a generic hyperplane slice $S_Z$ at $p$ in $Z$ must also have an ordinary double point singularity at $p$, see for example \cite{KS}. 
 
 Now we consider the convergence of $X_i$ to $Z$ inside a projective space \cite{DS14}. We can fit $X_i$ in a flat family (say, over a certain Hilbert scheme) $\mathcal X$ with central fiber $Z$. Similar argument as in the proof of the lemma shows that by restricting to the germ around $p$, we may obtain a corresponding family of germs of hypersurfaces in $\C^4$, which we still denote by $\mathcal X$, that gives rise to a deformation of the germ $Z$ at $p$. By taking generic hyperplane sections again we may assume that the corresponding slices in $\mathcal X$ gives a deformation of $S_Z$. Again by versality of deformations we conclude that for $i$ large by shrinking the size, the slice $S_i$ in $X_i$ must be isomorphic to a neighborhood in the smoothing $x^2+y^2+z^2=1$, given by intersecting with a Euclidean ball in $\C^3$. 
 
 The important fact here is that such a neighborhood is topologically isomorphic to a neighborhood of the zero section in the cotangent bundle of an $S^2$ (the vanishing sphere), hence it must be simply connected. 
 
Now we can prove that $L_Z$ must be a genuine line bundle. As in Lemma \ref{ind} it suffices to show that $L_{C(Y)}$ is the trivial line bundle. To see this we first recall by the result of \cite{DS14} that there is a fixed integer $N>0$ such that $L_{Z}^N$ is a line bundle and it is naturally the limit of $L_i^N$ over $X_i$ under the GH convergence. We can take holomorphic sections $s_i$ of $L_i^N$ in a neighborhood in $X_i$,  that converges to a non-vanishing section $s$ of $L_Z^N$. As in the proof of Lemma \ref{ind} this further gives rise to a parallel section $s_{C(Y)}$ of $L_{C(Y)}^N$.  Since $S_i$ is simply connected, and we can take a $N$-th root of $s_i$, to  obtain a holomorphic section $\sigma_i$ of $L_i$ over $S_i$. Under the natural convergence of $S_i$ to the slice $S_\infty$ (first take limit to $S_Z$, then dilate, and we can take a diagonal sequence), we know $\sigma_i$ converges naturally a holomorphic section $\sigma_\infty$ of $L_{C(Y)}$ restricted to the smooth locus of $S_\infty$ (which is homotopic to the link $S^3/\Z_2$), with $\sigma_\infty^{\otimes N}=s_\infty$. Indeed, because of the scaling procedure the section $\sigma_\infty$ (also $s_\infty$) is parallel section of the flat bundle $L_{C(Y)}$.  This implies the flat connection on $L_{C(Y)}$ restricted to one link $S^3/\Z_2$ has trivial holonomy, hence it is the trivial flat connection outside the singular locus of $C(Y)$. So the corresponding holomorphic line bundle $L_{C(Y)}$ is also trivial. This concludes the proof of Proposition \ref{nonisolated}.

 \

 Finally, we remark that there is another argument to conclude such $3$-dimensional case.  If $p$ is an isolated singularity of $Z$, then by  Milnor's Theorem on the homotopy groups of hypersurfaces singularites \cite{M} Theorem 5.2 we know the topological link of $p$ has trivial fundamental group. Then one can argue similarly as we did several times before to see that $L$ must be holomorphically trivial on the link, hence must be a line bundle near $p$.  If $p$ is an non-isolated singularity of $Z$, then again by the above slice argument we know a generic nearby singularity must locally be given by $\C\times \C^2/\Z_2$. Now we can apply the rigidity case of the Liu's estimate in \cite{Liu} Theorem 3, to conclude that $Z=\C\P^3/\Z_2$, with  $[x:y:z:t] \simeq [x:y:-z:-t]$.  Moreover $\C\P^3/\Z_2$ embeds in $\C\P^5$ via the map $[x:y:z:t] \mapsto [x^2:y^2:xy:z^2:t^2:zt]$ as the unique, up to linear transformation, GIT-polystable  intersection of two quadrics with non-isolated singularities.  
 
 \
 
 In conclusion, by combining the results of this section, we have proved Proposition \ref{dP43D}.

 \subsection{Moduli identification}

 In the previous sections we have shown that all GH limits $Z$ of smooth KE intersection of quadrics are themselves embedded in $\C\P^{n+2}$ as  intersection of two quadrics (and the limit root $L$ of the anticanonical bundle is very ample). 
 
 By considering the pencil spanned by the two quadrics (i.e., taking the second Hilbert point) one can consider the following GIT picture for the natural Pl\"ucker linearization:
 $$SL(n+3, \C)  \curvearrowright Gr(2, Sym^2(\C^{n+3})) ֒\hookrightarrow \P (\wedge^2 Sym^2(\C^{n+3})).$$
 Let $\overline{M_{dP_4^n}}^{GIT}$ such GIT quotient. It is well-known \cite{AL}, that strictly GIT stable points correspond precisely to smooth quadric intersections, thus forming  a  moduli space $M_{dP_4^n}$, itself of dimension $n$, while more general polystable boundary points corresponds  to certain  simultaneously diagonalizable intersections of two quadrics. The study of intersection of quadrics, and questions of simultaneous diagonalization (see  for example Reid thesis \cite{Reid}), is an old subject. Here we mention that the study of the above GIT quotient is in fact related  to the study of binary forms, which can be naturally associated to a pencil by taking its discriminant (see \cite{MM} and \cite{AL}).

  Our next goal is to show that all GH limits of smooth KE intersections of quadrics correspond to GIT polystable points. Using K-polystability of the limit $Z$, which holds by \cite{B}, we can say more, via a stability comparison argument based on the CM line bundle \cite{PT}. Since the proof is the same as Theorems 3.4 and 4.2 in \cite{OSS}, we just sketch the main points. 
  
  Moreover, note that we know  $\mathcal{E}^+M_{dP^n_4}$ is not empty \cite{N}.
 
 \begin{lem} $Z$ is GIT-polystable.
 \end{lem}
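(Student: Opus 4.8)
The plan is to follow the strategy of Theorems 3.4 and 4.2 in \cite{OSS}: translate K-polystability of $Z$ (which holds by Berman \cite{B}) into GIT-semistability of the corresponding parameter point via a comparison between the CM line bundle and the Plücker linearization, and then upgrade semistability to polystability using the K-polystability assumption together with soft GIT.

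First I would fix the setup. By the previous sections $L_Z$ is a genuine very ample line bundle with $L_Z^{n-1}=-K_Z$ and $\dim H^0(Z,L_Z)=n+3$, so $Z$ embeds in $\P^{n+2}=\P(H^0(Z,L_Z)^*)$ as the complete intersection of the pencil of quadrics cutting it out. This pencil is precisely a point $x_Z\in Gr(2,Sym^2(\C^{n+3}))$, well defined up to the $SL(n+3,\C)$-action, carrying the Plücker linearization $\mathcal L$ coming from $\P(\wedge^2 Sym^2(\C^{n+3}))$; the lemma is the assertion that $x_Z$ is polystable for $(SL(n+3,\C),\mathcal L)$.

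The main step — and the main technical point — is the \emph{CM/Plücker comparison}. Using Paul--Tian's description of the Donaldson--Futaki invariant as a Hilbert--Mumford weight of the CM line bundle \cite{PT}, one computes, by Riemann--Roch on the universal family of quadric intersections (using $\dim Z=n$ and $-K_Z=L_Z^{n-1}$), that the CM line bundle on the relevant parameter space is a \emph{positive} rational multiple $c\,\mathcal L$ of the Plücker line bundle. Concretely, for any one-parameter subgroup $\lambda\subset SL(n+3,\C)$, the associated test configuration $\mathcal X_\lambda$ obtained by degenerating $Z\subset\P^{n+2}$ along $\lambda$ satisfies $DF(\mathcal X_\lambda)=c\,\mu^{\mathcal L}(x_Z,\lambda)$. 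The care needed here is the normalization: since the embedding is by the $(n-1)$-st root of $-K_Z$ rather than by $-K_Z$ itself, the proportionality constant and the identification of the CM bundle on the Hilbert/Chow parameter space with $\mathcal L$ on $Gr(2,Sym^2(\C^{n+3}))$ must be tracked, exactly as for surfaces in \cite{OSS}; this is where I expect essentially all of the real work to lie. One also uses that the component of the Hilbert scheme parametrizing complete intersections of two quadrics is just this Grassmannian, so no stray higher Hilbert points intervene.

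Granting this, I would conclude as follows. Since $Z$ is a GH limit of KE Fanos it is K-polystable by \cite{B}, in particular K-semistable, so $DF(\mathcal X_\lambda)\geq 0$ for every $\lambda$; by the comparison $\mu^{\mathcal L}(x_Z,\lambda)\geq 0$ for all $\lambda$, i.e. $x_Z$ is GIT-semistable. Let $x_0$ be a point of the unique closed orbit contained in $\overline{SL(n+3,\C)\cdot x_Z}$; it corresponds to an intersection $Z_0$ of two simultaneously diagonalizable quadrics, which is again a normal klt $\Q$-Fano by the classical classification of such pencils, and $Z_0$ is the central fiber of $\mathcal X_{\lambda_0}$ for some $\lambda_0$ with $\lim_{t\to 0}\lambda_0(t)\cdot x_Z=x_0$. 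A standard GIT fact — the Hilbert--Mumford weight of a degeneration of a semistable point into the minimal orbit vanishes — gives $\mu^{\mathcal L}(x_Z,\lambda_0)=0$, hence $DF(\mathcal X_{\lambda_0})=0$; since $\mathcal X_{\lambda_0}$ is a special test configuration and $Z$ is K-polystable, this forces $Z_0\cong Z$, so $x_Z$ itself lies in the closed orbit and is GIT-polystable.
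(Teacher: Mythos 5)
Your argument is correct and follows the same overall strategy as the paper (CM/Pl\"ucker comparison as in \cite{OSS}, then conclude polystability). The one place where you take a genuinely different, and harder, route is the proportionality step: you propose to establish $CM = c\,\mathcal L$ with $c>0$ by a Riemann--Roch computation on the universal family, and you flag this as ``where all the real work lies.'' The paper sidesteps any such computation: since $\operatorname{Pic}\bigl(Gr(2, Sym^2(\C^{n+3}))\bigr)\cong\Z$ and $SL(n+3,\C)$ is semisimple (hence has no nontrivial characters that could twist the linearization), the $SL(n+3,\C)$-equivariant CM line bundle is \emph{automatically} a rational multiple of the Pl\"ucker linearization; positivity of that multiple is then pinned down qualitatively by the existence of at least one K-polystable (indeed K-stable, smooth KE) intersection of two quadrics \cite{N}, whose orbit would otherwise be GIT-unstable. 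So the proportionality is essentially free, with no intersection-theoretic work needed. Once $CM = c\,\mathcal L$ with $c>0$ is in hand, your closed-orbit argument upgrading GIT-semistability to GIT-polystability matches what the paper intends when it simply states ``Hence K-polystability implies GIT-polystability.''
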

 \begin{proof}
 	Being the Picard group of the Grassmanian parameter space isomorphic to $\Z$ and the action an  $SL(n+3,\C)$ action, the CM line bundle (whose weight is the Donaldson-Futaki invariant) on such Grassmanian is equivariantly equivalent up to positive scaling to the Pl\"ucker linearization, since we know that there is at least one K-polystable point. Hence K-polystability implies GIT-polystability.
 \end{proof}

 For readers' convenience, let us sketch the moduli continuity type argument used to prove our main theorem. By the above results we can now define a natural map $$\phi_{dP^n_4}: \overline{\mathcal{E}^+M}^{GH}_{dP^n_4} \rightarrow \overline{M}^{GIT}_{dP^n_4}.$$ Such map is continuous with respect to the GH, and analytic topology of the target. This follows by Luna's slice Theorem (or \cite{LWX1, O15}): if $(X_i,\omega_i)$ GH converge to $X_\infty$, then $[X_i]$ are represented by GIT polystable points near $[X_\infty]$, since the varieties $X_i$ converge to $X_\infty$. Hence $[X_i]\rightarrow [X_\infty]$. The continuity of  $\phi_{dP^n_4}$  extends to the boundary. Now  we claim that $\phi_{dP^n_4}$ is actually a homeomorphism. Injectivity follows by Bando-Mabuchi uniqueness and its generalization to the singular setting \cite{Bern}. Since $Aut(X)$ is finite for a smooth $X$, by Implicit Function Theorem  $\phi_{dP^n_4}(\mathcal{E}^+M_{dP^n_4})$ is open in $M_{dP^n_4}$. Moreover $\phi_{dP^n_4}(\mathcal{E}^+M_{dP^n_4})$ is also closed: let $[X_i]\rightarrow [X_\infty]$ in $M_{dP^n_4}$, with $X_i$ KE. Then, by continuity of the map, $\phi_{32}([X_{i_j}, \omega_{i_j}]) \rightarrow  \phi_{dP^n_4}([Y_\infty, \omega_{\infty}])= [Y_\infty] \in  \overline{M}^{GIT}_{dP^n_4}$, with $Y_\infty$ a (a-priori possibly singular) KE GH limit.   Since $\overline{M}^{GIT}_{dP^n_4}$ is Hausdorff and by definition $\phi_{dP^n_4}([X_{i_j}, \omega_{i_j}])=[X_{i_j}]$, the limit is unique, hence $X_\infty \cong Y_\infty$ admits a KE metric.  $M_{dP^n_4}$ is also connected, hence $\phi_{dP^n_4}(\mathcal{E}^+M_{dP^n_4})=M_{dP^n_4}$, recovering $\cite{AGP}$. Finally $\phi_{dP^n_4}(  \overline{\mathcal{E}^+M}^{GH}_{dP^n_4}) $ is a compact set containing the dense subset $M_{dP^n_4}$. Hence $\phi_{dP^n_4}$ has to be onto. Being $\phi_{dP^n_4}$ a continuous map between a compact and an Hausdorff space, $\phi_{dP^n_4}$ is also open. This conclude the proof of our main Theorem \ref{MT1}.

 \

We now describe some properties of such KE moduli space and of their boundary points.  By the classification of GIT polystable intersection of two quadrics \cite{AL} (Theorem $4.2$ and its proof), we get the following result on the explicit existence of (weak) KE metric on  intersections of two quadrics in $\C\P^{n+2}$.

\begin{thm}\label{KEQ}
	A possibly singular complete intersection of two quadrics in $\C\P^{n+2}$ admits a KE metric if and only if the two quadrics can be simultaneusly diagonalized, the discriminant of their pencil admits no roots of multiplicity $>\frac{n+3}{2}$ and if there is a root of multiplicity exactly equal to $\frac{n+3}{2}$ then $Q_1 \cap Q_2$ is actually isomorphic to $\{\sum_{i=0}^{\frac{n+1}{2}} x_i^2=\sum_{i=\frac{n+3}{2}}^{n+2} x_i^2=0\}$.
\end{thm}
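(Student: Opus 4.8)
The plan is to derive Theorem~\ref{KEQ} by combining the explicit GIT classification of pencils of quadrics with the moduli identification of Theorem~\ref{MT1}. First note that if a complete intersection $X = Q_1 \cap Q_2 \subset \C\P^{n+2}$ admits a (weak) K\"ahler--Einstein metric then it is a $\Q$-Gorenstein smoothable K-polystable Fano, hence, by Corollary~\ref{cor3-6} and Proposition~\ref{dP43D} together with T.~Fujita's classification \cite{TFuj}, it is a del Pezzo variety of degree four with Gorenstein canonical singularities; conversely, every such del Pezzo variety is a $\Q$-Gorenstein smoothing limit of smooth degree-four del Pezzo manifolds, so by \cite{SSY} it carries a weak KE metric precisely when it is K-polystable. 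By Theorem~\ref{MT1} and the CM line bundle comparison of Section~\ref{dP4} (the CM weight on the Grassmannian being, up to positive scaling, the Pl\"ucker linearization \cite{PT}), K-polystability of such an $X$ is equivalent to GIT-polystability of its pencil under the Pl\"ucker-linearized $SL(n+3,\C)$-action on $Gr(2, Sym^2(\C^{n+3}))$. Thus Theorem~\ref{KEQ} reduces to a purely GIT question: which pencils of quadrics are polystable?

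To answer this I would invoke the classical analysis of this GIT problem \cite{AL} (building on \cite{MM} and the theory of pencils of quadrics, e.g.\ \cite{Reid}). The discriminant of $\{\lambda Q_1 + \mu Q_2\}$ is an $SL(2,\C)$-equivariant binary form of degree $n+3$ whose roots, counted with multiplicity, record the singular members of the pencil. The Hilbert--Mumford numerical criterion then gives: the pencil is GIT-stable exactly when $X$ is smooth, i.e.\ the discriminant has $n+3$ distinct roots; it is GIT-semistable exactly when $Q_1, Q_2$ can be simultaneously diagonalized and the discriminant has no root of multiplicity exceeding $\frac{n+3}{2}$; and a semistable but non-stable pencil is polystable exactly when, whenever a root of multiplicity precisely $\frac{n+3}{2}$ occurs, the pencil is projectively equivalent to $\{\sum_{i=0}^{(n+1)/2} x_i^2 = \sum_{i=(n+3)/2}^{n+2} x_i^2 = 0\}$. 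This last clause is the pencil-analogue of the elementary fact that a binary form of even degree $2m$ with a root of multiplicity $m$ is semistable but has closed $SL(2,\C)$-orbit only if it equals $x^m y^m$, every other such form degenerating to $x^m y^m$. Matching the normal forms of \cite{AL} to the root multiplicities of the discriminant yields precisely the list in Theorem~\ref{KEQ}, and both implications follow.

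The delicate point --- and the main obstacle --- is that the discriminant does not remember the Segre symbol of the pencil: several non-isomorphic pencils (one simultaneously diagonalizable, the others carrying Jordan blocks) may share the same discriminant $D$. One must check that for a fixed $D$ of degree $n+3$ with root multiplicities $m_1, \dots, m_k$ there is, up to projective equivalence, a \emph{unique} simultaneously diagonalizable pencil, namely the one with Segre symbol $[1^{m_1}, \dots, 1^{m_k}]$, and that every pencil with discriminant $D$ which is not simultaneously diagonalizable has non-closed $SL(n+3,\C)$-orbit degenerating onto a diagonalizable one, hence is not polystable; consequently the corresponding $X$ --- whose pencil is intrinsically recovered as $H^0(\mathcal I_X(2))$ under the (essentially unique) Fujita embedding, so that abstract isomorphisms of such varieties arise only from projective transformations --- does not lie in $\overline{M}^{GIT}_{dP_4^n} \cong \overline{\mathcal K M}_{dP_4^n}$ and admits no weak KE metric. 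These facts are exactly the content of \cite{AL} (Theorem~4.2 and its proof), and, combined with Fujita's classification \cite{TFuj} which guarantees that every singular member of the moduli space is itself a complete intersection of two quadrics so that no case is omitted, the proof is complete. I expect no new analysis to be needed --- the analytic content is entirely absorbed into Theorem~\ref{MT1} --- so the remaining work is the somewhat intricate bookkeeping relating Segre symbols to discriminant multiplicities and confirming that the polystable stratum is as claimed.
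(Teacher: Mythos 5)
Your proof is correct and takes essentially the same route as the paper: Theorem~\ref{MT1} identifies the KE/K-moduli with $\overline{M}_{dP_4^n}^{GIT}$, and then Theorem~\ref{KEQ} is read off from the classification of GIT-polystable pencils of quadrics in Avritzer--Lange \cite{AL} (Theorem~4.2). The extra steps you include (Corollary~\ref{cor3-6}, Proposition~\ref{dP43D}, \cite{SSY}) are ingredients already absorbed into Theorem~\ref{MT1}, so they are harmless but redundant once that theorem is available; your discussion of Segre symbols versus discriminants, and of the essential uniqueness of the Fujita embedding, makes explicit the bookkeeping that the paper delegates entirely to \cite{AL}.
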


The above just says that, by changing basis, KE intersections are cut  by $Q_1= Id$ and $Q_2$ diagonal with eigenvalues whose multiplicities are no greater than $\frac{n+3}{2}$. Note that the equality $\frac{n+3}{2}$ case for the multiplicity can occur for odd dimensional intersections of two quadrics only.

In dimension three, in particular, we can say that GH limits have at most isolated Hermitian singularities or, if not, they coincide with the orbifold $ \C\P^3/\Z_2$, which is singular along two disjoint smooth rational curves (this a rigidity case for Liu's volume estimate). It is also easy to see that the number of ODP singularities is always even, their maximum number is six, and  this is achieved by the unique toric KE intersection $xy=zt=uv$. Moreover, thanks to the relation with the well-understood invariants of binary sextics, we have  $\overline{\mathcal{E}^+M}^{GH}_{dP^3_4}\cong \overline{\mathcal{K}M}_{dP^3_4}\cong \C\P(1,2,3,5)$ as topological spaces, with boundary divisor given by $t=0$, where $t$ is the weight $5$ coordinate in the weighted projective space.

More generally, from Theorem \ref{KEQ}, we can get the following understanding of the singular set:

\begin{cor} \label{sing}  The singular set of GH limits of KE intersections of two quadrics in $\C \P^{n+2}$ is at most of complex dimension $\lfloor\frac{n-1}{2}\rfloor$. Moreover, the algebraic stratification of the singular set consists of disjoint smooth strata  which are bundles of ODP singularities, i.e., locally analytically of type $\C^k \times A_1^{n-k}$ with $k\in \{0, \dots, \lfloor\frac{n-1}{2}\rfloor\}$. In particular,  there are no quotient singularities as soon as $n\geq4$.
\end{cor}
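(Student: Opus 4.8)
\textbf{Proof proposal for Corollary \ref{sing}.}

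The plan is to reduce everything to the normal form provided by Theorem \ref{KEQ}, and then read off the singular set directly. First I would observe that any GH limit $Z$ of KE intersections of two quadrics is, by Theorem \ref{KEQ}, projectively equivalent to $Q_1 \cap Q_2$ where $Q_1 = \sum_{i=0}^{n+2} x_i^2$ and $Q_2$ is diagonal with eigenvalues $\lambda_0, \dots, \lambda_{n+2}$, and where every eigenvalue multiplicity is at most $\frac{n+3}{2}$ (with the extremal case $\frac{n+3}{2}$ only occurring in odd dimensions and forcing the specific $\{\sum_{i=0}^{(n+1)/2} x_i^2 = \sum_{i=(n+3)/2}^{n+2} x_i^2 = 0\}$ model). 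The next step is the standard computation of the singular locus of such a pencil: the singular points of $Q_1 \cap Q_2$ are the base points lying on a singular member of the pencil, and more precisely, grouping coordinates according to equal eigenvalues, the singular stratum associated to an eigenvalue $\lambda$ of multiplicity $m$ is the linear subspace cut out by setting all \emph{other} coordinates to zero, intersected with the two quadrics — which, after the $m$ coordinates with eigenvalue $\lambda$ are restricted to, becomes a single smooth quadric of dimension $m-2$ inside $\C\P^{m-1}$. So each stratum is smooth of complex dimension $m-2$, and transverse to it the singularity is an iterated ODP, i.e.\ locally $\C^{m-2} \times A_1^{n-(m-2)}$. (Here I would make sure to phrase the transverse type carefully: away from where two such strata could meet — and by the diagonalization they are pairwise disjoint, coming from distinct eigenvalues — the germ is exactly a product of $A_1$'s with a smooth factor.)

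Then the dimension bound is immediate: the largest a stratum can be is $m-2$ with $m \le \frac{n+3}{2}$, giving complex dimension at most $\frac{n+3}{2} - 2 = \frac{n-1}{2}$, and since the dimension is an integer this is $\lfloor\frac{n-1}{2}\rfloor$. I would note that the extremal value $\lfloor\frac{n-1}{2}\rfloor$ is attained precisely by the $\frac{n+3}{2}$-multiplicity case, which by Theorem \ref{KEQ} is the explicit model above; for $k$ strictly below $\lfloor\frac{n-1}{2}\rfloor$ one simply takes eigenvalues with the appropriate multiplicities. For the last sentence, I would argue that a quotient singularity $\C^j/\Gamma$ with $\Gamma$ nontrivial cannot appear: the transverse slice to any stratum is $A_1^{n-k} = (\C^2/\Z_2)$-type products, which for $n \ge 4$ and $n-k \ge 2$ (always the case once $n\ge 4$, since $k \le \lfloor\frac{n-1}{2}\rfloor \le n-2$) is a product $\C^2/\Z_2 \times \C^2/\Z_2 \times \cdots$; such a product of two or more $A_1$-surface singularities is not a quotient singularity — one quick way to see this is that its tangent cone / the singularity itself is not isolated of the right form, or more robustly that a finite quotient singularity $\C^j/\Gamma$ has an isolated singular point only if the action is free in codimension one and its singular locus, when non-isolated, still has a quotient structure incompatible with the reducible singular locus of $A_1 \times A_1$; cleanest is to invoke that $\C^2/\Z_2 \times \C^2/\Z_2$ would have to be $\C^4/\Gamma$ with $\Gamma \subset GL(4)$, but the local fundamental group of the smooth locus of a product $A_1\times A_1$ is $\Z_2 \times \Z_2$ acting with a two-dimensional fixed locus, which cannot arise from a small subgroup of $GL(4,\C)$ since the singular set of such a quotient has quotient-type transverse structure, i.e.\ transverse $A_1$, not transverse $A_1\times A_1$.

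The main obstacle I anticipate is making the last claim — ``no quotient singularities for $n \ge 4$'' — clean and correct rather than hand-wavy. The honest statement is that an iterated ODP $A_1^{\ell}$ with $\ell \ge 2$ is not a quotient singularity, and the most economical rigorous argument is the one used already in this paper's spirit (cf.\ the $n=3$ discussion and Schlessinger rigidity): a nontrivial quotient singularity $\C^j/\Gamma$ is infinitesimally rigid by Schlessinger \cite{Sc} when $\Gamma$ acts freely in codimension $\ge 3$, whereas $A_1^\ell$ for $\ell\ge 2$ has nontrivial deformations (smoothing the individual $A_1$ factors independently gives an $\ell$-dimensional, hence positive-dimensional, deformation space that does not come from automorphisms), so the two cannot coincide; and in the low-codimension cases ($\ell = n-k$ small) one checks directly using the reducibility of the singular locus of $A_1 \times A_1$ versus the irreducible (linear) transverse structure of a quotient singularity's singular stratum. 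So the plan for that step is: identify the transverse type as $A_1^{n-k}$ from Theorem \ref{KEQ}; note $n-k \ge n - \lfloor\frac{n-1}{2}\rfloor \ge 2$ when $n \ge 4$; and invoke non-rigidity of $A_1^{\ell}$, $\ell \ge 2$, against Schlessinger rigidity of quotient singularities to conclude. I would double-check the edge cases $n=4,5$ by hand, where $k\in\{0,1\}$ and the transverse type is $A_1^{4}$ or $A_1^{3}$ (for $n=4$) respectively $A_1^{5}, A_1^{4}$ (for $n=5$), all visibly non-quotient.
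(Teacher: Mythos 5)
Your overall plan — read off the singular locus of the diagonal pencil $Q_1 = \sum x_i^2$, $Q_2 = \sum \lambda_i x_i^2$ supplied by Theorem~\ref{KEQ}, identify the strata with the quadrics in the eigenspaces, and bound their dimensions by $m-2 \le \frac{n+3}{2}-2$ — is the right one and matches how the paper derives the corollary. However, there is a genuine conceptual error in your treatment of the transverse singularity type. You read $A_1^{n-k}$ as the product of $n-k$ copies of the surface ordinary double point $\C^2/\Z_2$, and then spend the last paragraph reasoning about $\C^2/\Z_2\times\C^2/\Z_2$, its reducible singular locus, its $\Z_2\times\Z_2$ local fundamental group, etc. But that interpretation cannot be correct: the germ $\C^k\times(A_1)^{n-k}$ would have complex dimension $k+2(n-k)=2n-k\ne n$. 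The intended meaning of $A_1^{n-k}$ is the single $(n-k)$-dimensional ordinary double point $\{\sum_{i=1}^{n-k+1}w_i^2=0\}\subset\C^{n-k+1}$, as the phrase ``bundles of ODP singularities'' indicates, and as your own transverse slice computation in fact produces: after subtracting $\lambda Q_1$, the local equation in the $n+3-m$ variables complementary to the eigenspace becomes $\sum_j(\mu_j-\lambda)z_j^2=0$, which after rescaling is one ODP of dimension $n+2-m=n-k$, not a product.

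Once this is fixed, the no-quotient claim is actually easier to justify than your discussion suggests, and it clarifies why the cutoff is $n\ge 4$. Since $k\le\lfloor\frac{n-1}{2}\rfloor$, one has $n-k\ge\lceil\frac{n+1}{2}\rceil\ge 3$ for all $n\ge 4$ (your written bound $n-k\ge 2$ is too weak and would not rule out the genuine quotient singularity $A_1^2=\C^2/\Z_2$, which does occur for $n=3$). For $\ell\ge 3$, the $\ell$-dimensional ODP has an isolated hypersurface singularity whose link is simply connected by Milnor's theorem \cite{M}; a germ $\C^\ell/\Gamma$ with isolated singularity and $\Gamma$ nontrivial has link with fundamental group $\Gamma$, so they cannot be isomorphic. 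Equivalently, your Schlessinger-rigidity argument is valid once it is applied to this isolated ODP and to $\ell\ge 3$ (Schlessinger requires codimension $\ge 3$; it does not apply to, and would give the wrong conclusion for, $\ell=2$). Then, were $\C^k\times A_1^{n-k}$ a quotient $\C^n/\Gamma$, the transverse slice to the singular stratum would be a quotient $\C^{n-k}/\Gamma'$, contradicting the above. The product-of-$A_1$'s material in your last paragraph should simply be deleted.
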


Clearly the description is so explicit that one can even compute the number of connected components of the singular set, as well as describe their topology. 

Finally a couple of remarks are needed.

\begin{rmk}
	It may be interesting to observe that the above KE spaces have isolated ODP singularities which degenerate to non-isolated singularities (here still of type $\C^k \times A_1^{n-k}$) in their GH limits: locally analytically this corresponds to deformations of singularities like:
	$$V_t: \; x_0^2+x_1^2+x_2^2+tx_3^2=0.$$ 
	Such behavior shares some similarities with the jumping of metric tangent cones described in Section $2$. 
	
	The above phenomenon already happens for certain deformations of the three dimensional KE orbifold $ \C\P^3/\Z_2$. Note also that, in such case,  stability excludes the possibility of obtaining  a KE smoothing of this KE orbifold with  \emph{only one} of the two rational curves of singularities smoothed out. Clearly, similar behaviors occur in higher dimensions too.
\end{rmk}
\begin{rmk}
	It is natural to conjecture that the above singular KE metrics admit asymptotic expansions in suitable local holomorphic gauge to products of flat times Stenzel's cone metrics (i.e., they are conically singular metrics).  We should remark that by \cite{HS} similar results hold for smoothable Calabi-Yau varieties with isolated ODP singularities. It is natural to expect that similar arguments may work also in the KE Fano case, and we leave this for future work.
	\end{rmk}

\subsection{Relations with moduli of  hyperelliptic curves}

 Finally, we briefly emphasize that smooth intersections of two quadrics are also deeply related to moduli spaces  of hyperelliptic curves: to any such \emph{smooth} Fano $n$-fold one can naturally associate a genus two  curve by taking the ramified $2:1$ cover branching over the zeros of the discriminant of the pencil.
 
 When $n=3$,  one can further recover the original Fano as a moduli of stable rank $2$ vector bundles of odd degree on the curve (\cite{Ne, NR}). This gives a well-know instance of a \textquotedblleft Fourier-Mukai type duality" between moduli space of genus two curves and moduli space of our Fano manifolds.
 
  However (see \cite{AL}), this 1-1 relation breaks at the boundaries of their two canonical compactifications: the Deligne-Mumford compactification $\overline{M_2}^{DM}$ (which is the exactly equal to the  K-compactification in the case of curves) is a (smooth) blow-up of our K-compactifiaction $\overline{\mathcal{K}M}_{dP_4^3}$. In particular, $\overline{\mathcal{K}M}_{dP_4^3}$ is more related to a different algebro-geometric compactification of genus two curves, namely to  the so-called $\overline{M}_2[A_2]$ compactification (e.g. the survey \cite{FS}). Such compactification naturally appears in the Hassett-Keel birational study of the moduli space of curves.  
  
  More precisely, the coarse varieties  $\overline{\mathcal{K}M}_{dP_4^3}$ and $\overline{M}_2[A_2]$ are \emph{isomorphic} to each other and are both categorical quotients of some good moduli spaces in Alper's sense: of the \emph{KE-K moduli stack}  of K-semistable degree four del Pezzo threefolds  (see \cite{LWX1, O15, OSS} for details on the definitions), and of the stack of pseudostable curves (nodal curves where elliptic tails are replaced by cusps singularities) for $\overline{M}_2[A_2]$. In particular, there is a set theoretical 1-1  map between closed points in the stacks (the relation become more complicated on the full semistable strata). For example, the orbifold $\C\P^3/\Z_2$ corresponds to the \emph{bicuspidal rational curve}, i.e., the rational curve with only two cuspidal singularities of type $x^2=y^3$.  It is not clear how the vector bundle relation extends to the boundary.  We think that such correspondences deserve further investigation.
  
 For higher dimensions, here we just recall that, by the well-known theorem of Desale and Ramanan \cite{DR} there is still an identification between moduli spaces of stable rank two vector bundles with fixed determinant on smooth hyperelliptic curves of genus $g$  and the variety of $g-2$ dimensional linear subspaces of the corresponding simultaneously  diagonalized smooth intersection of two quadrics.

 \section{Further KE moduli examples: conjectures} \label{OtherKE}
 
 In this last section we discuss further examples of KE moduli under the hypothesis of a natural gap conjecture about volume of CY cones/Sasaki-Einstein manifolds. Moreover, we formulate an analogous conjecture on the algebro-geometric counterpart.

 \subsection{ODP volume gap conjecture}
 
 As it is apparent from the discussion in previous sections, the volume of the link of tangent cones plays an important role. In dimension two, since the singularities occurring in GH limits are of orbifold type, it is obvious that such volume densities are always of type $1/|G_p|$ where $|G_p|$ is the order of the orbifold group at $p$. In particular, one sees immediately that there is a \emph{gap} between smooth or singular points where the density is always $\Theta(Z,p)\leq\frac{1}{2}$ and the value $\frac{1}{2}$ precisely occurs for the $A_1$ singularity, i.e., the ordinary double point singularity, which can be considered in some sense as ``the simplest singularity" in dimension two. 
 
 In higher dimensions non-quotient singularities may appear in GH limits, and the metric tangent cone may not be isomorphic to the singularity itself. Nonetheless one can still consider the volume density at a singular point $p$, and we still have the following well-known abstract gap result:
 
 \begin{lem} \label{lem5-1}
 There is a constant $\delta<1$ depending only on $n$, such that for any GH limit $Z$ of KE Fano manifolds in dimension $n$, we have $\Theta(Z, p)\leq\delta$ for all singular points $p\in Z$. 
 \end{lem}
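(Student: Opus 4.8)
The plan is to prove Lemma \ref{lem5-1} by a contradiction/compactness argument combined with the fact that the only metric cone of density exactly $1$ is flat $\C^n$. Suppose no such $\delta<1$ exists. Then there is a sequence of GH limits $Z_j$ of $n$-dimensional KE Fano manifolds, each with a singular point $p_j\in Z_j$, such that $\Theta(Z_j, p_j)\to 1$. Passing to the metric tangent cone $C_j:=C(Y_j)$ at $p_j$, which by the properties recalled in Section \ref{prelim} has $\Theta(C_j)=\Theta(Z_j,p_j)\to 1$, we reduce to a sequence of $n$-dimensional Calabi-Yau cones with volume density tending to $1$. The total space of KE limits $\overline{\mathcal E^+M}$ in a fixed dimension with a fixed bound on the anticanonical volume is GH-compact, and the rescaling procedure that produces tangent cones stays within a controlled class (uniform Bishop-Gromov bounds, fixed Einstein constant on the links), so after passing to a subsequence the cones $C_j$ converge in the pointed GH topology to a limit metric space $C_\infty$.

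The key step is to identify $C_\infty$. By Colding's volume convergence theorem \cite{Colding}, the volume density is continuous under non-collapsed pointed GH convergence, so $\Theta(C_\infty)=\lim_j\Theta(C_j)=1$. On the other hand, $C_\infty$ is a non-collapsed limit of Ricci-flat (in the weak sense) spaces, hence is itself a metric cone with a Ricci-flat Calabi-Yau structure on its regular part, and it is a volume cone of density $1$. But by the rigidity case of Bishop-Gromov (the equality case, as used in Lemma \ref{rig} and the discussion following \eqref{eqn2-1}), a metric cone with volume density equal to $1$ must be isometric to flat $\R^{2n}=\C^n$; equivalently its link has the same volume as the round $S^{2n-1}$ and so is the round sphere. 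Thus $C_\infty\cong\C^n$.

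Now I would derive the contradiction from a gap/quantization phenomenon at the level of the singular points. For each $j$, $p_j$ is a genuine singular point, so $\Theta(Z_j,p_j)=\Theta(C_j)<1$ strictly (density $1$ forces smoothness, by the second bullet in Section \ref{prelim}). More importantly, one needs a \emph{uniform} gap below $1$, which is exactly what the lemma asserts, so the argument must produce the contradiction from the convergence $C_j\to\C^n$ itself. The cleanest way is to invoke the fact — a consequence of Cheeger-Colding theory together with the algebro-geometric structure of \cite{DS14, DS15} — that a Calabi-Yau cone $C(Y)$ sufficiently GH-close to $\C^n$ must in fact \emph{be} $\C^n$: the link $Y$ is then a smooth Sasaki-Einstein manifold $C^1$-close to the round $S^{2n-1}$, hence diffeomorphic to it and, by the rigidity of the round sphere among Sasaki-Einstein metrics (Bishop-type gap for the first eigenvalue / volume), isometric to it. This contradicts $p_j$ being singular and completes the proof.

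The main obstacle is the last step: one needs an effective \emph{non-quantitative} gap statement for Calabi-Yau cones near $\C^n$, i.e. that being GH-close to flat space forces actual flatness. This is where the structure theory genuinely enters — a purely metric Cheeger-Colding argument gives that the cross-section is $C^{1,\alpha}$-close to $S^{2n-1}$ and hence a homotopy sphere, but upgrading "close to round" to "equal to round" for the Sasaki-Einstein metric (when the cone may a priori be irregular) requires either the $\epsilon$-regularity theorem for Einstein metrics or the algebraic input that the tangent cone is an affine variety whose link cannot be a small perturbation of $S^{2n-1}$ without being $S^{2n-1}$. One should note, as the text does after the lemma, that this abstract $\delta$ is far from explicit, and producing the explicit bound $\delta=\Theta(\text{ODP})$ is precisely the content of Conjecture \ref{DGODP}.
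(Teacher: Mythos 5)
Your outline is structured as a compactness/contradiction argument, while the paper's proof is direct: choose $\delta$ to be the threshold from Anderson's volume-ratio $\epsilon$-regularity for Einstein metrics. But the bigger issue is the step you yourself flag as ``the main obstacle,'' and it is a genuine gap, not a detail to be filled in by the stated tools as quoted. You want to conclude that a Calabi--Yau cone GH-close to $\C^n$ must equal $\C^n$, and you propose to apply Anderson's $\epsilon$-regularity (or its Sasaki--Einstein avatar) directly to the cones $C_j$ or their links. This does not go through as written, because Anderson's gap theorem is a statement about \emph{smooth} Einstein manifolds with bounded Ricci, whereas the tangent cones produced by \cite{DS15} are a priori singular metric spaces with only a weak Einstein structure on the regular part; ``close to round implies round'' is not available in that generality without further argument, and indeed proving such a gap directly for singular cones is essentially equivalent in difficulty to the lemma itself.

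The step that closes this gap in the paper, and which is missing from your proposal, is to transfer the volume bound back to the smooth approximating manifolds before invoking $\epsilon$-regularity. Concretely: if $\Theta(Z,p)$ is close to $1$, pick a small $r$ with $Vol(B(p,r))/\omega_{2n}r^{2n}$ close to $1$; by Colding's volume convergence theorem, the corresponding balls $B(p_i,r)$ on the \emph{smooth} KE Fanos $X_i$ have volume ratio close to $1$; Anderson's gap (an honest $\epsilon$-regularity for smooth Einstein metrics) then forces bounded geometry on $B(p_i,r)$, hence smooth convergence $B(p_i,r)\to B(p,r)$ (see Theorem 0.8 of \cite{Colding}), so $p$ would be a smooth point. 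Your second-to-last paragraph does gesture at ``either the $\epsilon$-regularity theorem or the algebraic input,'' but the essential move — use volume convergence to pull the estimate back to the smooth level, where the regularity theorem actually applies — is not spelled out, and without it your argument at the level of singular cones is not complete. The compactness/contradiction wrapper is fine but unnecessary once that transfer is in place: the $\delta$ can just be taken to be Anderson's $\epsilon$-regularity threshold, no sequence of moduli spaces needed.
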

 \begin{proof}
 If the tangent cone $C(Y)$ has smooth cross section $Y$, then $Y$ is Sasaki-Einstein and this follows from Anderson's rigidity result \cite{Anderson}. In general one needs to use in addition Colding's theorem on volume convergence under GH limits \cite{Colding}. Indeed, if $p$ has volume density sufficiently close to $1$, then we can find a ball centered at $p$ with radius $r$ sufficiently small such that $Vol(B(p, r))/\omega_{2n}r^{2n}$ is also close to $1$. By volume convergence, we can find corresponding balls $B(p_i, r)$ on the smooth manifolds before we take limits, with $Vol(B(p_i, r))/\omega_{2n}r^{2n}$ close to $1$, hence by Anderson's gap result one obtains that $B(p_i, r)$ converges smoothly to $B(p, r)$ (see Theorem 0.8 in \cite{Colding}). 
 \end{proof}
 
 Now for $n\geq 2$ we define  $\mathcal{A}(n)$ to be the supremum of $\Theta(Z, p)$ for all GH limits $Z$ of KE Fano manifolds in dimension $n$, and for all singular points $p\in Z$.  By the above lemma $\mathcal{A}(n)<1$, and $1-\mathcal{A}(n)$ can be viewed as the optimal volume gap in dimension $n$. Again by Colding's volume convergence theorem $\mathcal{A}(n)$ can be realized as the density of some pointed Gromov-Hausdorff limit metric cone of a sequence of smooth KE Fano manifolds. 
 
 For our purpose here we only need a slightly different notion, without referring to singular GH limits. Namely we define $\mathcal{A}'(n)$ to be the supremum of $\Theta(C(Y))$, where $C(Y)$ is a cone of the form $\C^{n-k}\times C(Y')$ for $C(Y')$  a $k$ dimensional CY cone which is singular precisely at the vertex. Clearly $\mathcal{A}'(n)\leq \mathcal{A}(n)\leq 1$, and $\mathcal{A}'(m)\leq \mathcal{A}'(n)$ for $m\leq n$. Moreover, by \cite{MSY}, $\A'(n)$ is always an algebraic number. 
 
 For the simplest possible singularity, namely, the $A_1$ ODP  singularity $\sum_{i=1}^{n+1}x_i^2=0$ (which is not of quotient type as soon as $n>2$), we know it admits a  natural Stenzel's  CY cone metric. The corresponding Sasaki-Einstein manifold is a circle bundle over the quadric hypersurface in $\P^{n}$ equipped with a homogeneous K\"ahler-Einstein metric. It is easy to calculate that the volume density is $2(1-\frac{1}{n})^n$. Hence we have a lower bound
\begin{equation} \label{eqn5-1}
\mathcal{A}'(n)\geq 2(1-\frac{1}{n})^n. 
\end{equation}

\begin{thm} \label{thm5-2}
Let $Z$ be a GH limit of KE Fano manifolds $X_i$ with Fano index $r$ and 
$$c_1^n(-K_{X_i})>\frac{1}{2}\mathcal{A}'(n)(n+1)^n,$$ then $Z$ has Gorenstein canonical singularities and $-K_Z=rL_Z$ for some Cartier divisor $L_Z$. 
\end{thm}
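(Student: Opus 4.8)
The plan is to mimic closely the arguments of Section \ref{indexbound}, replacing the crude threshold $1/2$ by the sharper gap $\frac{1}{2}\mathcal{A}'(n)$, and then combine this with Liu's volume estimate and inequality (\ref{eqn2-1}). First I would observe that the hypothesis $c_1^n(-K_{X_i})>\frac{1}{2}\mathcal{A}'(n)(n+1)^n$, together with (\ref{eqn2-1}), forces $\Theta(Z,p)>\frac{1}{2}\mathcal{A}'(n)$ at \emph{every} point $p$ of $Z$. More importantly, by the semicontinuity of densities and the fact that the same volume bound passes to every iterated tangent cone (as recalled after Proposition \ref{val}), one gets $\Theta(C)>\frac{1}{2}\mathcal{A}'(n)$ for \emph{every} $C\in\mathcal I_p$. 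The point of introducing $\mathcal{A}'(n)$ is that it is precisely the supremum of $\Theta$ over cones of the form $\C^{n-k}\times C(Y')$ with $C(Y')$ singular only at the vertex; so for any such cone with $\Theta(C)>\frac{1}{2}\mathcal{A}'(n)$, the link $Y'$ of the nontrivial factor satisfies $\mathrm{Vol}(Y')\le \mathcal{A}'(n)\,\mathrm{Vol}(S^{2n-2k-1})$, while any nontrivial connected cover $\widetilde{Y'}$ would by Remark \ref{rmk3-4} (Bishop–Gromov on the universal cover) have volume $\le \mathcal{A}'(n)\,\mathrm{Vol}(S^{2n-2k-1})$ as well; combined with the covering degree being $\mathrm{Vol}(\widetilde{Y'})/\mathrm{Vol}(Y')$, we deduce the covering degree is at most $\mathcal{A}'(n)\cdot\Theta(C)^{-1}<\mathcal{A}'(n)\cdot\big(\tfrac12\mathcal{A}'(n)\big)^{-1}=2$, hence equal to $1$, i.e.\ $\pi_1(Y')=1$.

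Granting this sharpened version of Lemma \ref{lem3-3}, I would then run the induction of Lemma \ref{cart} verbatim: induct on $k$ where $n-k$ is the maximal number of flat $\C$-factors a tangent cone splits off; the base case $k\le 1$ is $\C^n$ by \cite{CCT}; in the inductive step a cone $C=\C^{n-l-1}\times C'$ has $L_{C'}$ Cartier away from the vertex of $C'$ by the induction hypothesis plus Lemma \ref{ind}, so $L_C$ is Cartier off $\C^{n-l-1}\times\{O_{C'}\}$, and the obstruction to holomorphic triviality is a connected covering of the complement of that set whose degree equals the order of the (finite) fundamental group $\pi_1$ of the relevant link — which we have just shown is trivial. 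Hence $L_C$ is holomorphically trivial, completing the induction, so $L_{C_p}$ is Cartier for all $C\in\mathcal I_p$, and by Lemma \ref{ind} this gives $ind(p,L_Z)=1$. Then exactly as in Proposition \ref{large}: $L_Z$ Cartier near $p$ implies $-K_X=rL_Z$ is Cartier near $p$, so the Gorenstein index at $p$ is one; since $Z$ is klt the discrepancies on a log resolution are integers $>-1$, hence $\ge 0$, so $p$ is canonical; and canonical implies Cohen–Macaulay, hence (with Gorenstein index one) Gorenstein. Running over all $p$ gives the theorem.

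The main obstacle is the Bishop–Gromov comparison on the universal cover of a \emph{singular} link, i.e.\ the content behind Lemma \ref{lem3-3} and Remark \ref{rmk3-4}. One must know that the singular set of $\widetilde{Y'}$ has zero measure (Hausdorff codimension $\ge 2$) and that geodesics between regular points stay regular (strong geodesic convexity, via \cite{CoNa}), so that the usual Bishop–Gromov argument can be pushed through on $\widetilde{Y'}$ despite the singularities; this is already handled in the proof of Lemma \ref{lem3-3}, so here it is only a matter of tracking the quantitative bound $\mathcal{A}'(n)\cdot\Theta(C)^{-1}$ in place of $\Theta(C)^{-1}$ and checking it is $<2$. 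The only genuinely new input over Corollary \ref{cor3-6} is recognizing that the relevant densities to compare against are not $1$ but $\mathcal{A}'(n)$, because every cone in $\mathcal I_p$ that can possibly host a nontrivial covering is of the split form $\C^{n-k}\times C(Y')$ with $C(Y')$ singular only at its vertex — exactly the class over which $\mathcal{A}'(n)$ is defined as a supremum. I would remark that the lower bound (\ref{eqn5-1}) shows Theorem \ref{thm5-2} is not vacuous and, unwinding $2(1-\tfrac1n)^n=2\big(\tfrac{n-1}{n}\big)^n$, that the volume threshold $\tfrac12\mathcal{A}'(n)(n+1)^n$ is at least $\big(\tfrac{(n-1)(n+1)}{n}\big)^n=\big(\tfrac{n^2-1}{n}\big)^n$, which recovers the bound appearing in Theorem \ref{thm1-3}(1) under Conjecture \ref{DGODP} (which asserts $\mathcal{A}'(n)=2(1-\tfrac1n)^n$).
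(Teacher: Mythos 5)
Your overall plan is right and follows the paper's route: sharpen the density threshold from $\tfrac12$ to $\tfrac12\mathcal{A}'(n)$, bound $\pi_1$ of links of iterated tangent cones, then run the induction of Lemma~\ref{cart} and finish as in Proposition~\ref{large}. But there is a genuine gap at the key step where you conclude $\pi_1(Y')=1$.

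You assert that ``any nontrivial connected cover $\widetilde{Y'}$ would by Remark~\ref{rmk3-4} have volume $\le\mathcal{A}'(n)\,\mathrm{Vol}(S^{2n-2k-1})$.'' That is not what Remark~\ref{rmk3-4} says: it gives $\mathrm{Vol}(\widetilde{Y'})\le\Theta(C(Y'),q)\,\mathrm{Vol}(S^{2n-2k-1})$ only when $\widetilde{Y'}$ has a \emph{singular} point $\tilde q$ projecting to $q$. If $\widetilde{Y'}$ is smooth, Bishop--Gromov only gives the trivial bound $\mathrm{Vol}(\widetilde{Y'})\le\mathrm{Vol}(S^{2n-2k-1})$, and $\mathcal{A}'(n)$ imposes nothing: it is a supremum over CY cones that are \emph{singular at the vertex}, whereas the metric cone on $\widetilde{Y'}$ may well be the flat $\C^k$. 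So the covering degree is only bounded by $\Theta(C)^{-1}<2/\mathcal{A}'(n)<4$, which permits degree $2$ or $3$, not just $1$.

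This is exactly the case the paper's proof treats separately. The correct dichotomy is: if $\widetilde{Y'}$ is singular, then Remark~\ref{rmk3-4} together with an iterated-tangent-cone argument bounds $\mathrm{Vol}(\widetilde{Y'})$ by $\mathcal{A}'(n)\,\mathrm{Vol}(S^{2n-2k-1})$ and yields a contradiction as you intended; if $\widetilde{Y'}$ is smooth, the gap hypothesis forces the cone over $\widetilde{Y'}$ to be flat, so the iterated tangent cone is a quotient $\C^n/G$ with $|G|\in\{2,3\}$. In that quotient case $\pi_1(Y')\neq 1$, so the induction of Lemma~\ref{cart} cannot proceed unchanged; one must instead invoke the argument of Proposition~\ref{nonisolated}: reduce to $\C^2/\Z_2$ or $\C^2/\Z_3$ by generic hyperplane slicing and Schlessinger's rigidity, note that the relevant (Artin component) smoothings have simply connected Stein neighborhoods of the vanishing cycle, conclude that the limit $\Q$-line bundle trivializes, and in the $\Z_3$ weight $(\zeta_3,\zeta_3)$ case derive a contradiction with Gorensteinness. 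Your proposal omits this branch entirely, and without it the index bound $ind(p,L_Z)=1$ --- and hence the theorem --- is not established.
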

 
\begin{proof}
The proof follows similar lines as in Section 3 and 4.  Below we only sketch the new technical points.

\begin{lem}
For any iterated tangent cone of the form $\C^{k}\times C(Y)$ with $k\geq 0$, $\pi_1(Y)$ is trivial unless the cone is a quotient of the form $\C^n/G$ for a finite subgroup $G\subset U(n)$. 
\end{lem}

\begin{proof}
Suppose $\pi_1(Y)$ is not trivial, then  arguing as in the proof of Lemma \ref{lem3-3}, we obtain the universal cover $\widetilde Y$ with $Vol(\widetilde Y)>\mathcal{A}'(n)Vol(S^{2n-2k-1})$. We claim that $\widetilde Y$ must be smooth. Otherwise, by Remark \ref{rmk3-4} there is a point $q\in Y\subset C(Y)$ with $\Theta(C(Y), q)>\mathcal{A}'(n)$. By passing to an iterated tangent cone we obtain a cone of the of form $\C^l\times C(Y')$ with $Y'$ smooth but volume density bigger than $\mathcal{A}'(n)$. Contradiction. Now given the claim, by our  gap assumption we  conclude that the cone over $\widetilde Y$ must be the flat $\C^n$, hence $C(Y)$ is a quotient of $\C^n$. 
\end{proof}

By (\ref{eqn5-1}), $\mathcal{A}'(n)>2/3$ for $n\geq 6$ and $\mathcal{A}'(n)>1/2$ for all $n$, it follows that the above subgroup $G$ can only be either $\Z_2$ or $\Z_3$.  Now one can follow the induction proof of Lemma \ref{cart}. If at $p$ all the iterated tangent cones are not equal to quotient of $\C^n$,  then the proof is exactly the same. If one iterated tangent cone is a quotient $\C^n/G$, then for the induction argument to go through, we need to follow the proof of Proposition \ref{nonisolated}. Notice such quotient is necessarily of the form $\C^{n-l}\times \C^{l}/G$, where $G$ acts non-trivially in the sphere $S^{2l-1}\subset \C^{l}$. We claim $l=2$. This follows by taking generic sections as in the proof of Proposition \ref{nonisolated}, and using Schlessinger's rigidity of quotient singularities in dimension greater than two \cite{Sc}. Now for our induction to work by the proof of Proposition \ref{nonisolated}, it suffices to show that on a smoothing of $\C^2/G$ with $G=\Z_2$ or $\Z_3$, there is a Stein neighborhood of the vanishing cycle that has trivial fundamental group. This is clearly the case when $G\subset SU(2)$. So we are left with the case $G=\Z_3$ acts diagonally on $\C^2$ with weight $(\zeta_3, \zeta_3)$. We claim this case never occurs. Notice it is is the non-$\Q$-Gorenstein smoothable cone over a rational normal curve of degree $3$. Its versal deformation space (coinciding with the Artin component \cite{St}) is given by a two dimensional space which parameterizes smoothings admitting  Stein neighborhoods homeomorphic to the  total space of the $\mathcal{O}(-3)$ bundle over $\P^1$, hence again simply connected. So by the proof of Proposition \ref{nonisolated} it follows that $\C^2/G$ has to be Gorenstein, so we have proved the claim.
\end{proof}

When we use the rough bound $\mathcal{A}'(n)\leq 1$ the above result reduces to Corollary \ref{cor3-6}. In general for practical applications of Theorem \ref{thm5-2} it is important to understand $\A'(n)$ better.  It is not unreasonable to expect the following:

\begin{conj}\label{conj5-4}
For all $n$, 
$$\A'(n)\leq 2(1-\frac{1}{n})^n. $$
\end{conj}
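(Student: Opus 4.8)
The plan is to show that among all $k$-dimensional Calabi--Yau cones with an isolated singularity, the Stenzel cone on the ODP $\sum_{i=1}^{k+1}x_i^2=0$ maximizes the volume density, and then bootstrap from $k$ to $n$ using the product structure $\C^{n-k}\times C(Y')$ built into the definition of $\A'(n)$. First I would reduce to the isolated-singularity statement: since $\A'(n)$ is by definition the supremum of $\Theta(C(Y))$ over cones $\C^{n-k}\times C(Y')$ with $C(Y')$ singular only at the vertex, and since the density of a product is the density of the singular factor (the flat factor contributes density $1$), it suffices to bound $\Theta(C(Y'))$ over all $k$-dimensional isolated-singularity CY cones for every $k\le n$; the monotonicity $\A'(m)\le\A'(n)$ then finishes. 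So the heart of the matter is precisely Conjecture \ref{DGODP}, the ODP volume gap in dimension $k$, which is exactly what the hypothesis of any conditional version would assume.

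For the unconditional part (the content of Theorem \ref{thm5-2} uses only $\A'(n)\le 1$, so there is genuinely new work here), I would attempt the following. Given an isolated-singularity CY cone $C(Y')$ arising as an iterated metric tangent cone in a GH limit, Liu's volume estimate together with Proposition \ref{val} and the two-step construction $(Z,p)\rightsquigarrow W\rightsquigarrow C(Y')$ show that the link $Y'$ carries a Sasaki--Einstein metric whose Reeb field minimizes the normalized volume functional of \cite{MSY} on the variety $W$. The strategy would be: (i) use the algebro-geometric characterization (Lemma \ref{lem4-7}) to control the embedding dimension of $W$ in terms of $Embdim(C(Y'),0)$; (ii) when the density is close to the conjectured bound $2(1-1/k)^k$, argue that $W$ must degenerate to a cone of very small embedding dimension — ideally a hypersurface — using the volume-minimization inequalities of Martelli--Sparks--Yau on the relevant weighted projective/quasi-cone; (iii) invoke a classification of low-embedding-dimension Sasaki--Einstein links of maximal volume, showing the quadric cone is the unique maximizer. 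Steps analogous to the slice arguments in the proof of Proposition \ref{nonisolated} would be used to pass singularity information between $Z$, $W$, and $C(Y')$.

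The main obstacle, and the reason this is only a conjecture, is step (ii)--(iii): there is currently no general volume rigidity statement for irregular Sasaki--Einstein metrics away from the explicitly homogeneous examples. Concretely, even granting that $W$ has small embedding dimension, one must rule out the existence of some exotic CY cone metric — possibly irregular — on a hypersurface singularity other than the ODP whose link has volume exceeding $2(1-1/k)^k \cdot Vol(S^{2k-1})$. For quasi-regular cones one could in principle attack this via the Martelli--Sparks--Yau volume functional on the quotient orbifold and an orbifold Riemann--Roch/intersection-theoretic estimate, but the irregular case seems to require either a new monotonicity formula along the normalized-volume minimization or a substantial strengthening of Anderson-type rigidity (Lemma \ref{lem5-1}) to a quantitative gap matching the ODP value. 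I expect the quasi-regular case in low dimensions (say $k=3$) to be tractable by direct computation with the finitely many candidate Fano orbifold bases, which is consistent with the remark in the introduction that the conjecture should be especially accessible for small $n$; the general irregular case I would leave open.
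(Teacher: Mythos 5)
The statement you were asked to prove is labeled a \emph{conjecture} in the paper, and the paper does not prove it. The only claim the paper makes about Conjecture~\ref{conj5-4} is the one-line observation that ``it is easy to see that Conjecture~\ref{DGODP} implies Conjecture~\ref{conj5-4}''; there is no independent argument to compare yours against. Your first paragraph correctly reproduces this implication: using that $\Theta(\C^{n-k}\times C(Y'))=\Theta(C(Y'))$, the supremum defining $\A'(n)$ is taken over $k$-dimensional isolated-singularity CY cones for $2\leq k\leq n$, and if Conjecture~\ref{DGODP} holds in each dimension $k\leq n$ then $\Theta(C(Y'))\leq 2(1-1/k)^k$. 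One small imprecision: what then finishes the reduction is the monotonicity of the function $k\mapsto 2(1-1/k)^k$ (which is increasing, so the bound in dimension $n$ dominates), not the monotonicity $\A'(m)\leq\A'(n)$ that you cite, which is a different fact and does not directly close the argument.

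Your second and third paragraphs are a speculative outline of how one might try to prove Conjecture~\ref{DGODP} itself, and you correctly flag that steps (ii)--(iii) — a volume-rigidity statement singling out the Stenzel cone, especially in the irregular case — are open. This is honest, but it is not a proof, and the paper does not attempt one either; both Conjecture~\ref{DGODP} and Conjecture~\ref{conj5-4} remain conjectural there. So the correct assessment is that your proposal, like the paper, establishes the conditional implication from Conjecture~\ref{DGODP} and leaves the conjecture itself open.
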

\noindent One expects the equality only holds for the $n$ dimensional Stenzel cone. Now it is easy to see that Conjecture \ref{DGODP} implies Conjecture \ref{conj5-4}, and the first part of Theorem \ref{thm1-3} follows exactly from Theorem \ref{thm5-2}.

\

 Volume of links of CY cones have been studied by Martelli-Sparks-Yau in the context of AdS-CFT correspondence (see, e.g., \cite{MSY}), where it is shown that, CY cones are minimizers of the volume functional on the space of Reeb vector fields.   From a physics point of view, here we just remark that, in dimension three, such volume densities are supposed to be inverses of quantities related to central charges of certain dual superconformal field theory (\emph{a-maximization}). 
 
 As we have recalled in Section \ref{prelim}, thanks to the description of the volume minimization in terms of the language of valuations \cite{Li16} and their relation to densities (still conjecturally in the irregular case), it is natural to formulate a purely algebro-geometric conjecture analogous to the density gap above described.
 
  \begin{conj}[ODP volume gap conjecture, algebraic version] \label{AlgODP}
  	Let $(V,p)$ be a non-smooth germ of a $n$-dimensional klt singularity. Then the infimum of the normalized volume of valuations centered at $p$ satisfies  $$\inf_{\nu} \widehat{vol}(\nu) \leq 2 \left( n-1\right)^n,$$
  	with equality realized only by the ordinary double point singularity $\sum_{i=1}^{n+1}x_i^2=0$.
  \end{conj}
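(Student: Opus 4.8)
The plan is to transport this conjecture, through the structure theory of the normalized volume, into an estimate for K-semistable log Fano varieties of one lower dimension, where Fujita--Liu type bounds apply.

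\emph{Step 1: reduction to K-semistable Fano cones.} Granting the minimizing-valuation picture of Li and Li--Xu --- that $\inf_\nu\widehat{vol}(\nu)$ for the germ $(V,p)$ is achieved by a quasi-monomial valuation inducing a degeneration of the germ to a K-semistable Fano cone singularity $(W,\xi)$, with $\widehat{vol}$ preserved --- it suffices to bound $\widehat{vol}(W,\xi)$. Since $(V,p)$ is singular and $\widehat{vol}=n^n$ characterizes the smooth germ, $W\neq\C^n$, so $W$ is singular at its vertex.

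\emph{Step 2: the quasi-regular case.} Here $W=C(S,L)$ with $S$ a K-semistable log Fano of dimension $m=n-1$, $-K_S\sim_\Q rL$, $L$ ample $\Q$-Cartier, and the minimizer is the canonical valuation $\operatorname{ord}_S$. Exhibiting $C(S,L)$ as a degree-$r$ cyclic cover of the anticanonical cone $C(S,-K_S)$, \'etale in codimension one, and using $\widehat{vol}(C(S,-K_S))=(-K_S)^m$ for K-semistable $S$, one gets $\widehat{vol}(W,\xi)=r\,(-K_S)^m$. Now one analyzes the Fano index $r$, chosen maximal. The case $r=m+1$ gives $S=\P^m$ and a smooth $W$, which is excluded. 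For $r=m$, Fujita's bound $(-K_S)^m\le(m+1)^m$ forces $(L^m)\in\{1,2\}$, and a Kobayashi--Ochiai type characterization identifies $(L^m)=2$ with $S=Q^m$, so that $W$ is the ODP and $\widehat{vol}(W,\xi)=2m^{m+1}=2(n-1)^n$, while $(L^m)=1$ gives something strictly smaller. For $r\le m-1$ one needs $r\,(-K_S)^m<2m^{m+1}$: this follows from Fujita's bound alone when the coindex $m+1-r$ is large, and for $r$ close to $m$ from classification --- Fujita's list of Gorenstein canonical del Pezzo varieties for $r=m-1$ and Mukai's list for $r=m-2$, with boundedness of K-semistable Fanos of bounded index for the remaining range. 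Assembled, these give the inequality with equality only at the ODP, settling also the equality clause in this case.

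\emph{Step 3: the irregular case, and what is hard.} When the minimizing Reeb field is irrational, $W$ is not literally a cone over a projective variety and the ``index'' is not an integer; one would then work with the Martelli--Sparks--Yau volume functional over the Reeb cone together with a Fujita-type inequality valid in the irregular setting, or rule out directly that an irregular K-semistable Fano cone attains $2(n-1)^n$. This, rather than the commutative algebra of Step 2, is where I expect the genuine difficulty to lie --- it is precisely the algebro-geometric shadow of the irregularity phenomenon behind the metric Conjecture of the introduction. The other inputs to be supplied in full strength are the stable-degeneration package of Step 1, the singular Kobayashi--Ochiai statement of Step 2, and a uniform ``large index implies small degree'' bound for K-semistable Fanos; note that the strategy already proves the conjecture unconditionally whenever the stable degeneration is quasi-regular --- in particular in dimension three, by the orbifold structure of tangent cones --- and in general reduces it to the behavior of irregular Sasaki--Einstein links.
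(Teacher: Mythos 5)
You have set out to prove something that the paper explicitly labels as an open \emph{conjecture} (Conjecture~\ref{AlgODP}); the paper offers no proof of it, and the remark immediately following records that even the much weaker bound $\inf_\nu\widehat{vol}(\nu)\leq n^n$ was, at the time of writing, not known in full generality. The paper also emphasizes that the algebraic conjecture is tied to the metric Conjecture~\ref{DGODP} only through Proposition~\ref{val}, whose equality case is itself conjectural when the metric tangent cone is irregular. So there is no proof in the paper to compare against; any argument presented here must be judged as a claim to resolve an open problem.

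Taken on those terms, your outline identifies plausible reductions but does not close them. The arithmetic in Step~2 is fine: writing $W=C(S,L)$, $n=m+1$, $-K_S\sim_{\Q} rL$, the valuation $\mathrm{ord}_S$ has $A_W(\mathrm{ord}_S)=r$ and $\mathrm{vol}(\mathrm{ord}_S)=(L^m)$, so $\widehat{vol}=r^{m+1}(L^m)=r(-K_S)^m$, and for $r=m$ Fujita's bound $(-K_S)^m\le(m+1)^m$ does force $(L^m)\le 2$ with $(L^m)=2$ giving exactly $2(n-1)^n$. However: (i) the stable-degeneration package of Step~1 (uniqueness of the minimizer and degeneration to a K-semistable Fano cone preserving $\widehat{vol}$) was only conjectured in \cite{Li16} and partially verified in \cite{LL}; you must assume it, not cite it; (ii) the ``Kobayashi--Ochiai type'' identification of $S$ for $r\ge m$ is stated for smooth (or at best Gorenstein canonical) Fanos, not for klt $\Q$-Fano K-semistable $S$ as needed; (iii) Fujita's bound alone only handles $r\lesssim 2m/e$, so you need classification in the window $2m/e<r\le m-1$, and the lists you invoke (T.~Fujita for index $m-1$, Mukai for $m-2$, BAB-type boundedness for the rest) again apply to Gorenstein canonical, not general klt $\Q$-Fano, bases, with the boundedness input not available at the time; (iv) you concede the irregular case outright, which is exactly the crux the paper points to. So the proposal is a conditional reduction to several nontrivial and partly open ingredients, not a proof, and it should be presented as such.
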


  \begin{rmk}\begin{enumerate}
\item From discussions with Kento Fujita and Yuchen Liu, we  are informed that even the weaker inequality $\inf_{\nu} \widehat{vol}(\nu) \leq n^{n}$ is not yet proved in complete generality. Notice from a metric point of view, if $(V, p)$ is a germ of a pointed rescaled Gromov-Hausdorff limit of KE Fano manifolds, then by Proposition \ref{val} this weaker inequality holds by Bishop-Gromov volume comparison theorem. On the other hand, Conjecture \ref{conj5-4} follows from Conjecture \ref{AlgODP} if the equality case of Proposition \ref{val} holds (which is known to be true when the metric tangent cone is quasi-regular).
  		\item When $n=3$, it is possible that such densities are always $\leq \frac{1}{2}$ except the cases of $A_1$ singularity, when it is equal to $16/27$ or $A_2$ singularity, when it is equal to $125/243$ (note that they are equal to $1/2$ for any other $A_k$ singularity for $k\geq3$, corresponding to the expected jumping picture for the metric tangent cone to the flat cone  $\C\times \C^2/ \Z_2$). In fact, we do not know any other CY cones (even among known quasi-regular or irregular ones) which have higher values of densities.
		
  	\end{enumerate}
  	
  \end{rmk}

 \subsection{Del Pezzo of degree three (cubic hypersurfaces)}
 
Now we prove part (2) of Theorem \ref{thm1-3}.

Let $Z$ be a GH limit of smooth K\"ahler-Einstein cubic hypersurfaces in dimension $n\geq 3$. Since the volume is given by $3(n-1)^n\geq (1-\frac{1}{n})^n(n+1)^n$, by Theorem  \ref{thm5-2} we know $Z$ has at worst Gorenstein canonical singularities and $K_Z=L_Z^{n-1}$ for some ample line bundle $L_Z$. Hence by Fujita's classification \cite{TFuj} $Z$ must be a cubic hypersurface too.  Notice also by \cite{T00} (or \cite{N} for $n\leq4$) a Fermat cubic hypersurface $x_0^3+\cdots+x_n^3=0$ admits a K\"ahler-Einstein metric so is K-stable. By the same discussion as in \cite{OSS} this implies that the CM line bundle over the GIT moduli is a positive multiple of the standard polarization $\O(1)$, hence is ample. By \cite{B} we know $Z$ is K-polystable, so $Z$ is a GIT polystable cubic hypersurface. Now we argue as in \cite{OSS} and conclude that the GIT compactification agrees with KE compactification. This finishes the proof of Theorem \ref{thm1-3}, (2). 

Now we recall that in dimension 3 and 4 the GIT moduli space has been extensively studied. GIT of cubic threefolds is done by Allcock \cite{All}. We recall the result
 
 \begin{thm}[\cite{All}]
 \begin{itemize}
 \item A cubic threefold is GIT stable if and only if it has only isolated singularities of type $A_k$, $k\leq 4$. 
 \item A cubic threefold is GIT polystable with non-discrete stabilizer if and only if it is isomorphic to $F_\Delta=x_0x_1x_2+x_3^3+x_4^3$ which has exactly three $D_4$ singularities; or it is isomorphic to $F_{A, B}=Ax_2^3+x_0x_3^2+x_1^2x_4-x_0x_2x_4+Bx_1x_2x_3$ where at least one of $A, B$ is non-zero, which has at worst isolated $A_1$ and $A_5$ singularities when $4A\neq B^2$, and which has a transverse $A_1$ singularity along a rational normal curve when $4A=B^2$(the chordal cubic). 
 \end{itemize}
 \end{thm}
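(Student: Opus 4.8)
The plan is to run Mumford's numerical criterion for the action of $SL(5,\C)$ on $\P(H^0(\P^4,\O(3)))\cong\P^{34}$. After fixing a maximal torus $T\subset SL(5,\C)$ diagonal in coordinates $x_0,\dots,x_4$, every nontrivial one-parameter subgroup is, up to $SL(5,\C)$-conjugacy, of the form $\lambda=(a_0,\dots,a_4)$ with $a_0\ge a_1\ge\cdots\ge a_4$, $\sum a_i=0$; for a cubic $F=\sum_{|I|=3}c_Ix^I$ the Hilbert--Mumford weight is $\mu(F,\lambda)=-\min\{\langle a,I\rangle: c_I\neq0\}$. Then $F$ is \emph{not stable} exactly when some $\lambda$ makes every monomial of $F$ have nonnegative weight, and \emph{not semistable} exactly when some $\lambda$ makes every monomial have strictly positive weight; polystability means the $SL(5,\C)$-orbit of $F$ is closed inside the semistable locus.

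The combinatorial core is that only finitely many $\lambda$ matter: the space of normalized $\lambda$ is cut into finitely many cones on which the "destabilized state" $S^{>0}_\lambda=\{I:\langle a,I\rangle>0\}$ (resp.\ $S^{\ge0}_\lambda$) is constant, so it suffices to test the vertices/rays of this fan. I would enumerate these maximal $\lambda$, and for each one record the monomial support that a destabilized (or non-stable) cubic is forced to lie in. This yields a short explicit list of "destabilizing types," and for each type I would write down the generic cubic supported on that monomial set and compute its singular locus and analytic singularity type directly. Symmetries of the problem (the Weyl group $S_5$ permuting coordinates) collapse the list further.

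The second half is the dictionary between this combinatorics and the ADE classification. Given a cubic threefold $X=\{F=0\}$ singular at a point, move it to $[0:\cdots:0:1]$ and write $F=x_4\,Q(x_0,\dots,x_3)+C(x_0,\dots,x_3)$ with $Q$ quadratic and $C$ cubic; the analytic type at the singular point is then governed by $Q$ together with the restriction of $C$ to $\{Q=0\}$. One matches: $\operatorname{rank}Q=4$ gives an $A_1$ point; $\operatorname{rank}Q=3$ gives $A_k$ with $k\ge2$, the value determined by the order of contact of $C$ with $\{Q=0\}$ along the kernel direction, and the bound $k\le4$ corresponds precisely to the survival of a "good" monomial (of $x_0^3$-type) that prevents the associated $\lambda$ from destabilizing; $\operatorname{rank}Q\le2$ forces $D_4$ (nondegenerate binary cubic tail), or a worse/non-isolated singularity (degenerate tail). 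I would then prove the two implications separately: (i) any cubic with a singularity worse than $A_4$, or with non-isolated singularities, is destabilized by an explicit $\lambda$ adapted to its local normal form; (ii) conversely, if $X$ has at worst $A_{\le4}$ singularities, then $\mu(F,\lambda)>0$ for all $\lambda$, checked by running through the finite list from the second paragraph and observing that each non-stable state forces a singularity at least as bad as $A_5$ (or $D_4$, or non-isolated).

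Finally, for the polystable boundary points with positive-dimensional stabilizer I would locate the minimal (closed) orbits. For each strictly semistable $\lambda$ on the list, the limit $F_0=\lim_{t\to0}\lambda(t)\cdot F$ lies in the weight-zero subspace and is $\lambda$-fixed; iterating this degeneration, a representative of a closed orbit is invariant under a subtorus of $T$, hence extremely rigid. Classifying torus-fixed semistable cubics up to the residual reductive symmetry, and checking that the resulting cubics admit no further degeneration within the semistable locus, I expect to recover exactly $F_\Delta=x_0x_1x_2+x_3^3+x_4^3$ (stabilizer a rank-$2$ torus up to finite index, three $D_4$ points) and the pencil $F_{A,B}$ (a one-dimensional stabilizer), including the chordal cubic $4A=B^2$ with its transverse $A_1$ along a rational normal curve; along the way one verifies these are the only boundary orbits meeting the "big" destabilizing cones with a nontrivial torus in their stabilizer. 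The main obstacle I anticipate is not any single conceptual step but the bookkeeping: enumerating the destabilizing $\lambda$ without omission, and, in the polystable analysis, making sure one has found \emph{all} closed orbits and computed their stabilizers correctly --- the $A_4$/$A_5$ wall and the chordal-cubic wall are exactly where a miscount would hide, so those cases deserve an independent direct check.
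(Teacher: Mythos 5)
This theorem is not proved in the paper at all: it is stated as a recollection of Allcock's result and the proof is delegated entirely to the citation \cite{All}. Your sketch --- running the Hilbert--Mumford numerical criterion for $SL(5,\C)$ acting on cubic forms, enumerating the finitely many relevant one-parameter subgroups via the fan structure on normalized weight vectors, translating destabilizing monomial supports into local singularity types through the normal form $F=x_4Q+C$ and the rank of $Q$, and then locating the closed semistable orbits by passing to torus-fixed limits --- is indeed the structure of the proof in Allcock's paper, so you have reproduced the approach of the reference rather than of the present paper. The outline is sound; the genuine difficulty you correctly flag is the case analysis itself (in particular establishing the $A_4/A_5$ threshold and verifying completeness of the list of closed orbits, including the chordal cubic with its non-isolated transverse $A_1$ locus), and a full account would require carrying out that bookkeeping in detail exactly as Allcock does.
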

 
 In particular it is easy to see that a GIT polystable cubic in dimension 3 must have canonical singularities. 
 
GIT of cubic fourfolds has been studied in detail by Laza in \cite{Laza}. In particular, it is known that a cubic fourfold with ADE singularities is GIT stable, and singularities of a polystable  cubic fourfold corresponding to a generic point on the GIT quotient compactification are classified (see the longer list of such possible singularities in Table $3$ of \cite{Laza}).
 
 As far as we are aware, there is no explicit study of higher dimensional GIT quotients of cubics hypersurfaces. However, our previous discussion leads to some purely GIT questions that may be worth studying: as a direct consequence of Theorem \ref{thm1-3}, (2), we notice that if Conjecture \ref{conj5-4} holds, then the GIT moduli agrees with KE moduli. However, by \cite{DS14} every GH limit in the KE moduli must have log terminal singularities, and if is again a cubic hypersurface then it must have Gorenstein canonical singularities. So a natural question is:

 \begin{prob}
Is it true that a GIT polystable cubic in $\P^n$ has at worst Gorenstein canonical singularities? In particular, does it always have normal singularities?
 \end{prob}
 
This is intimately related to Conjecture \ref{conj5-4}. If the answer to this question is negative, then it would mean Conjecture \ref{conj5-4} is false.

We observe that even without assuming Conjecture \ref{conj5-4} we have: 

\begin{prop}
A GH limit $Z$ of smooth K\"ahler-Einstein cubic threefolds has only Gorenstein canonical singularities, and $K_Z^{-1}=L_Z^{2}$ for some $\Q$-line bundle $L_Z$. 
\end{prop}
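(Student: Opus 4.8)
The plan is to mimic the three-dimensional del Pezzo argument from Section~\ref{dP4}, but now for cubic threefolds, whose anticanonical volume is $c_1^3(-K_{X_i})=3\cdot 2^3=24$. This is exactly equal to $\frac{1}{2}\mathcal A'(3)(n+1)^3$ only if $\mathcal A'(3)=3/4$, so in general we cannot quote Theorem~\ref{thm5-2} directly; instead we must carefully analyze the possible local densities $\Theta(Z,p)$ of a GH limit $Z$ of KE cubic threefolds. First I would recall from (\ref{eqn2-1}) that $\Theta(Z,p)\geq V/(n+1)^3 = 24/64 = 3/8$ for all $p\in Z$, and the same holds for all iterated tangent cones. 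So if $\Theta(Z,p)>1/2$ we are done by Proposition~\ref{large}. The remaining case is $3/8\leq \Theta(Z,p)\leq 1/2$, and the key point is that since $Z$ is a $\mathbb Q$-Gorenstein smoothable three-dimensional klt germ, the metric tangent cone $C(Y)$ is a three-dimensional Calabi--Yau cone, so its link $Y$ is a five-dimensional Sasaki--Einstein orbifold (as remarked after Lemma~\ref{lem3-3}, using \cite{DS14}).

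Next I would run the same induction-on-flat-splitting argument as in Lemma~\ref{cart}, reducing to showing $L_{C(Y)}$ is holomorphically trivial whenever $C(Y)$ is a three-dimensional CY cone arising as an iterated tangent cone of $Z$. Two sub-cases arise according to whether the singularity of $C(Y)$ is isolated or not. If the singularity is isolated, then $Y$ is smooth, and by Bishop--Gromov applied to the universal cover $\widetilde Y$ together with Anderson's rigidity \cite{Anderson}, either $Y$ is simply connected (and then $L_{C(Y)}$ is trivial since its $N$-th power is, by Remark~\ref{rmk3-2}), or $\pi_1(Y)=\mathbb Z_2$ and $\widetilde Y=S^5$, forcing $C(Y)=\mathbb C^3/\mathbb Z_2$; but this is ruled out by Schlessinger's rigidity of quotient singularities \cite{Sc} exactly as in Case~1 of Section~\ref{dP4}, since $C(Y)$ is a degeneration of the smoothable germ $(Z,p)$. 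Here one should double check that with $\Theta\geq 3/8$ the only quotient that can occur for a simply-non-connected smooth link is $\mathbb Z_2$: the volume density of $\mathbb C^3/\mathbb Z_k$ is $1/k$, and $1/3<3/8$, so indeed only $k=2$ survives. If the singularity of $C(Y)$ is non-isolated, then by Lemma~\ref{rig} the cone splits as $\mathbb C\times (\mathbb C^2/\mathbb Z_2)$, i.e.\ the ODP $\sum x_i^2=0$, and the density is exactly $1/2$; now Proposition~\ref{nonisolated} applies verbatim and gives that $L_Z$ is Cartier near $p$.

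Once $L_Z$ is shown to be a genuine line bundle near every singular point, the usual argument of Proposition~\ref{large} finishes: $-K_Z$ is Cartier, $Z$ has log terminal singularities with integer discrepancies $\geq -1$, hence canonical, hence Gorenstein (canonical implies Cohen--Macaulay). The equation $K_Z^{-1}=L_Z^{2}$ with $L_Z$ a $\mathbb Q$-line bundle is just the limiting relation $K_{X_i}^{-1}=L_i^{2}$ carried through the GH convergence \cite{DS14}; that $L_Z$ is actually a line bundle (not merely $\mathbb Q$-Cartier) is precisely the content established above, so if desired the statement can be upgraded. The main obstacle I expect is the non-isolated case: one must be sure that the slice/versality argument of Proposition~\ref{nonisolated} really does apply here, i.e.\ that Lemma~\ref{lem4-7} lets us realize $(Z,p)$ as a hypersurface germ in $\mathbb C^4$ degenerating to $\sum x_i^2=0$, that a generic hyperplane slice $S_Z$ then acquires a 2d ODP, and that the smoothing slices $S_i\subset X_i$ are simply connected neighborhoods of a vanishing $S^2$; all of this is already done in the proof of Proposition~\ref{nonisolated}, so the real work is just checking that no other local model with $3/8\leq\Theta\leq 1/2$ slips through—equivalently, that a three-dimensional CY cone with link of nontrivial $\pi_1$ and density in $(3/8,1/2]$ must be $\mathbb C\times\mathbb C^2/\mathbb Z_2$ or $\mathbb C^3/\mathbb Z_2$.
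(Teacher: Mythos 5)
Your proposal correctly identifies the key density lower bound $\Theta(Z,p)\geq 24/64 = 3/8$, the resulting bound $|\pi_1(Y)|\leq 2$ from Lemma~\ref{lem3-3}, and the need to invoke Proposition~\ref{nonisolated} for non-isolated tangent cone singularities. However, it contains a genuine gap: you are trying to prove the \emph{stronger} claim that $L_Z$ itself is Cartier, whereas the proposition only asserts $K_Z^{-1}=L_Z^2$ with $L_Z$ a $\Q$-line bundle --- and the paper explicitly remarks afterwards that upgrading $L_Z$ to a genuine line bundle is precisely what they cannot do unconditionally (it would follow from Conjecture~\ref{conj5-4}). Your argument for this stronger claim breaks at two points. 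In the isolated-singularity case, the assertion that $\pi_1(Y)=\Z_2$ forces $\widetilde{Y}=S^5$ is incorrect: that step requires $\Theta(Z,p)=\frac{1}{2}$ exactly so that $Vol(\widetilde{Y})\geq Vol(S^5)$ and Bishop--Gromov rigidity kicks in, as in Case~1 of Section~\ref{dP4}. With only $\Theta\geq 3/8$ you get $Vol(\widetilde{Y})\geq\frac{3}{4}Vol(S^5)$, which is consistent with $\widetilde{Y}$ being a smooth Sasaki--Einstein $5$-manifold not isometric to the round sphere; such a $Y$ with $\pi_1(Y)=\Z_2$ cannot be excluded and need not be $S^5/\Z_2$, so Schlessinger's rigidity does not apply. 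In the non-isolated case, your invocation of Lemma~\ref{rig} is likewise unjustified: the lemma requires $\Theta(C(Y),q)=\Theta(C(Y),0)$ for some $q\neq 0$. In the degree-$4$ del Pezzo case this held because both densities were pinched to exactly $\frac{1}{2}$, but here the transverse density at $q$ is $\frac{1}{2}$ while $\Theta(C(Y),0)$ is only known to lie in $[\frac{3}{8},\frac{1}{2}]$, so the cone need not split.

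The correct route for the statement as written is weaker and simpler: run the induction of Lemma~\ref{cart}, using Proposition~\ref{nonisolated} at non-vertex singular points of the iterated tangent cones (where the transverse model must be $\C^2/\Z_2$ since $|\Gamma|\leq 2$), and at the vertex observe that the holonomy of the flat bundle $L_C$ factors through $\pi_1$ of the link, a group of order at most $2$. Therefore $L_C^2 = K_C^{-1}$ has \emph{trivial} holonomy and is hence holomorphically trivial even when $L_C$ is not. By (the analogue of) Lemma~\ref{ind} this gives $K_Z^{-1}$ Cartier near $p$, hence $Z$ Gorenstein, and the discrepancy argument of Proposition~\ref{large} gives canonical singularities; $L_Z$ itself remains only a $\Q$-line bundle. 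In short: the point of using $K_{X_i}^{-1}=L_{X_i}^2$, as the paper's one-line proof hints, is that squaring kills the possible $\Z_2$-holonomy, and one does not need (and cannot prove without an extra volume-gap input) that the holonomy of $L_C$ itself vanishes.
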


This follows from similar arguments as in the proof of Lemma \ref{cart} and Proposition \ref{nonisolated}, using the fact that $K_{X_i}^{-1}=L_{X_i}^2$. If one can prove $L_Z$ is a genuine line bundle, then it would follow from the classification of Fujita \cite{TFuj} that $Z$ must be a cubic hypersurface in $\P^4$ and hence the KE moduli agree with GIT moduli. It would also be interesting to see if Fujita's classification can be extended to our setting.

  \subsection{Other Fano manifolds}
 
 We expect that, in order to proceed further, an extension of the classification of mildly singular smoothable Fano $3$-folds with small Gorenstein index and with bounds on invariant of the singularities given by differential geometric considerations as  above, would be useful for studying both moduli space of Fano 3-folds and the existence of KE metrics on them. We should recall that there is a classification of Gorenstein Fano $3$-folds with assumptions on decompositions of the anticanonical linear system by Mukai \cite{Mukai}. 
 
   
   Our discussion gives the following picture regarding a-priori bounds on the Gorenstein index of GH limits of KE Fano $3$-folds:
   
   \begin{cor} Assuming Conjecture \ref{conj5-4}, GH limits of smooth  KE Fano  $3$-folds whose volume is $\geq 20$  are always Gorenstein with canonical singularities. Without assuming Conjecture \ref{conj5-4}, the Gorenstein index of GH limits is at most two as long as the volume is $\geq 22$ and in this case the canonical divisor is Gorenstein away from at most finitely many points. 
   	
   	Among the $105$ deformation types of smooth Fano $3$-folds, $75$ have volume bigger than or equal to $20$, and four deformation classes have volume equal to $22$.
   	\end{cor}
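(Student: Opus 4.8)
The plan is to treat the conjectural and the unconditional halves separately and then do the bookkeeping. For the first half I would simply specialize Theorem \ref{thm5-2} to $n=3$: the lower bound (\ref{eqn5-1}) reads $\mathcal A'(3)\geq 2(1-\tfrac13)^3=\tfrac{16}{27}$, so Conjecture \ref{conj5-4} pins down $\mathcal A'(3)=\tfrac{16}{27}$, and the volume threshold in Theorem \ref{thm5-2} becomes $\tfrac12\cdot\tfrac{16}{27}\cdot 4^3=\tfrac{512}{27}\approx 18.96$. Since $(-K_X)^3$ is an even integer for every Fano $3$-fold, the hypothesis $c_1^3(-K_{X_i})\geq 20$ already lands strictly above $\tfrac{512}{27}$, so Theorem \ref{thm5-2} gives that the GH limit $Z$ has Gorenstein canonical singularities with $-K_Z=rL_Z$ for a genuine line bundle $L_Z$.

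For the unconditional half I would not use the conjectural value of $\mathcal A'(3)$ but argue along the stratification of $\mathrm{Sing}(Z)$, which is a finite union of points and curves since $Z$ is a normal $3$-fold, using only that $\Theta(Z,p)\geq V/64\geq 22/64>1/3$ holds at every point of $Z$ and of every iterated metric tangent cone (by (\ref{eqn2-1})). At a general point $q$ of a one-dimensional component of $\mathrm{Sing}(Z)$ the metric tangent cone $C_q$ splits off a Euclidean factor $\mathbb C$ along the stratum (it lies in a one-dimensional stratum of the Cheeger--Colding singular set), so by \cite{CCT} $C_q=\mathbb C\times\mathbb C^2/G$ with $G\subset SU(2)$; its density $1/|G|=\Theta(Z,q)\geq 22/64>1/3$ forces $|G|\leq 2$, hence $G=\mathbb Z_2$, and Proposition \ref{nonisolated} makes $L_Z$, hence $-K_Z=rL_Z$, Cartier near $q$. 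Thus $K_Z$ is Cartier on a dense open subset of every singular curve, so the locus where $K_Z$ fails to be Cartier is a finite set $\Sigma$. This is exactly the place where the threshold $22$ rather than $20$ is needed: for $V$ only $\geq 20$ a curve of transverse $A_2$ points ($\Theta=\tfrac13$) would no longer be excluded.

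It then remains to bound the local Gorenstein index at a point $p\in\Sigma$. By Lemma \ref{ind}, $ind(p,L_Z)=ind(O_p,L_{C_p})$ on the tangent cone $C_p=C(Y)$, where $Y$ is a five-dimensional Sasaki--Einstein orbifold by \cite{DS14}. Running the previous paragraph on $C_p$ itself (which is again a pointed rescaled limit) shows that $L_{C_p}$ is Cartier away from the vertex, so $L_{C_p}|_Y$ is an orbifold line bundle; let $m$ be the least integer with $L_{C_p}^{m}$ holomorphically trivial (Remark \ref{rmk3-2}), so that $ind(p,L_Z)=ind(O_p,L_{C_p})=m$ and the associated connected degree-$m$ cyclic cover satisfies $\widehat{C_p}=C(\widehat Y)$ with $\widehat Y$ again a five-dimensional Sasaki--Einstein orbifold. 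Bishop--Gromov comparison on $\widehat Y$ (as in the remark after Lemma \ref{lem3-3}: discard the measure-zero orbifold locus and use the strong geodesic convexity of the regular part) gives $m\cdot\Theta(Z,p)\cdot Vol(S^5)=Vol(\widehat Y)\leq Vol(S^5)$, whence $m\leq 64/22<3$, i.e. $ind(p,L_Z)\leq 2$ and $ind(p)=m/r\leq 2$; in fact $ind(p)=1$ as soon as $r\geq 2$, so the ``finitely many points'' only occur for $r=1$, $V=22$. Finally I would read off from the classification tables in \cite{IP} that, of the $105$ deformation types of smooth Fano $3$-folds, exactly $75$ satisfy $(-K_X)^3\geq 20$ and exactly four of these have $(-K_X)^3=22$.

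The step I expect to be the genuine obstacle is the last geometric one: justifying the Bishop--Gromov inequality on the possibly non-product orbifold covers $\widehat Y$, which the paper flags as an open question in general dimension. In dimension three, however, the links of metric tangent cones and all their iterated tangent cones are Sasaki--Einstein orbifolds whose orbifold locus has real codimension at least four, so the orbifold Bishop--Gromov argument already used in Lemma \ref{lem3-3} should carry over; this is precisely what makes the sharp index bound $ind(p,L_Z)\leq\lfloor 1/\Theta_p\rfloor$ available here, rather than the weaker $\lfloor 1/\Theta_p\rfloor^{\,n-1}$ of the general Cartier-index estimate.
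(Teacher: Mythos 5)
Your proposal is essentially a correct reconstruction of the argument, and it fills in a gap the paper leaves implicit. The paper states this corollary after its ``discussion'' without spelling out a proof, and importantly the general Cartier index bound of Proposition 3.9 only yields $ind(p,L_Z)\leq\Lambda^{n-1}=4$ for $\Lambda=2$, $n=3$, which is weaker than the claimed Gorenstein index $\leq 2$. You correctly identify that the sharper linear bound $ind(p,L_Z)\leq\lfloor 1/\Theta_p\rfloor$ is what is actually needed, and that this is obtainable in dimension three because the paper's Remark~3.5 guarantees the links of all iterated tangent cones are $5$-dimensional Sasaki--Einstein orbifolds, so the cyclic cover $\widehat Y$ of $Y$ associated to the flat line bundle $L_{C_p}$ is again an orbifold on which Bishop--Gromov applies (equivalently, one can quote Lemma~3.3 directly: $|\pi_1(Y)|\leq\lfloor 1/\Theta\rfloor\leq 2$ when $V\geq 22$, and the covering degree $m$ divides $|\pi_1(Y)|$). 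Your conditional half (specializing Theorem~5.2 with $\mathcal A'(3)=16/27$, threshold $512/27<20$ and evenness of Fano $3$-fold volumes) and the use of Proposition~4.6 along the one-dimensional singular strata to reduce the bad locus to a finite set are both exactly right.

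Two small imprecisions worth flagging. First, your explanation that $V\geq 22$ is needed to ``exclude a curve of transverse $A_2$ points'' slightly misattributes where the threshold enters: even a transverse $A_2$ curve would leave $K_Z$ Cartier there (the vanishing cycle of an $A_2$ smoothing is still simply connected, and the paper's proof of Theorem~5.2 carries the $\mathbb Z_3\subset SU(2)$ case through by the same route as Proposition~4.6). What $V\geq 22$ really buys is $\Theta>1/3$, hence $|\pi_1(Y)|\leq 2$ for the link of the top tangent cone, which is what forces the covering degree $m\leq 2$; at $V=20$ one could have $|\pi_1(Y)|=3$ and thus $m=3$. Second, your parenthetical ``so the finitely many points only occur for $r=1$, $V=22$'' is too strong: the argument gives $m\leq 2$ for the whole range $22\leq V\leq 32$ with $r=1$, not only $V=22$. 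Neither of these affects the validity of the proof.
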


Here we recall that the volume of a Fano $3$-fold is always even, and  as long as the volume is bigger than $32$ we have mostly rigid examples.  
 
 Interesting examples  include the case of Iskovskikh-Mukai-Umemura Fano 3-folds of degree $22$ and Picard rank one, since arguments along these lines can solve the problem of understanding which smooth ones are KE, even ``far away" from the special KE  Mukai-Umemura $3$-fold. Here it would be interesting to show that all GH limits are still given by  intersections of three  sections of the tautological vector bundle over the Grassmanian as the smooth ones do.   Then  (see discussion in \cite{Sp}),  it seems very plausible that one can consider a GIT picture similar to the previous case of degree four del Pezzos to conclude.
 
 Another example (with much smaller anticanonical volume) we like to briefly discuss is the case of quartic hypersurfaces in $\C\P^4$. By very recent work of Fujita \cite{Fujita1}, it is known that all smooth quartics admit KE metrics. However, by classification of smooth  Fano $3$-folds, it is known that quartic hypersurfaces do not form a complete family, since they can be deformed to the ``hyperelliptic'' Fanos  given by double covers of a  smooth quadric in $\P^4$ ramified over a smooth divisor of degree eight obtained as intersection with a quartic. In such situation, it is  also known that all such  Fanos admit KE metrics \cite{AGP}. Thus the GH moduli compactification is \emph{not}  equal to the GIT quotient of quartics, since  we need (at least) to blow-up such GIT quotient at the non-reduced double conic, similarly to the case of degree $2$ del Pezzo considered in \cite{OSS}. We think that a concrete algebro-geometric analysis of such situation, by trying to explicitly construct a compactification made only by $\Q$-Fano varieties, is indeed very interesting.
 
Similarly, moduli of del Pezzos of low degree and of other Fano $3$-folds could be  possibly analyzed by considering GIT gluings coming from certain hypersurfaces in weighted projective spaces. In general, it is clearly very interesting to investigate precisely which Fano 3-folds are known to have a non-empty KE moduli spaces.

 \end{document}